\documentclass[11pt]{amsart}
\usepackage{amsmath,paralist,amssymb}
\usepackage{amsthm}
\usepackage{hyperref}
\usepackage{graphicx,subfigure}
\usepackage{tikz,color}
\usepackage[margin=1in]{geometry}
\usetikzlibrary{arrows,shapes,snakes,backgrounds} 
\usepackage{verbatim}

\usepackage[all]{xy}

\definecolor{DarkBlue}{rgb}{0,0,0.8} 
\definecolor{DarkGreen}{rgb}{0,0.5,0.0}

\addtolength{\oddsidemargin}{-.0in}
\addtolength{\evensidemargin}{-.0in}
\addtolength{\textwidth}{.0in}

\newtheorem*{lemma*}{Lemma}

\newtheorem*{corollary*}{Corollary}
\newtheorem*{theorem*}{Theorem}
\newtheorem*{theorem1*}{Theorem \ref{thm:vol}}
\newtheorem*{theorem2*}{Theorem \ref{thm:vol2}}
\newtheorem*{theorem3*}{Theorem \ref{thm:volC}}

\numberwithin{equation}{section}
\newtheorem{theorem}{Theorem}[section]
\newtheorem{proposition}[theorem]{Proposition}
\newtheorem{lemma}[theorem]{Lemma}
\newtheorem{corollary}[theorem]{Corollary}
 
\theoremstyle{remark}
\newtheorem{remark}[theorem]{Remark}

\newtheorem{example}[theorem]{Example}

\theoremstyle{definition}
\newtheorem{definition}[theorem]{Definition}

\newcommand\multiset[2]%
{\mathchoice{\left(\kern-0.5em{\binom{#1}{#2}}\kern-0.5em\right)}
            {\bigl(\kern-0.3em{\binom{#1}{#2}}\kern-0.3em\bigr)}
            {\bigl(\kern-0.3em{\binom{#1}{#2}}\kern-0.3em\bigr)}
            {\bigl(\kern-0.3em{\binom{#1}{#2}}\kern-0.3em\bigr)}}

\DeclareMathOperator{\aff}{aff}
\DeclareMathOperator{\PS}{PS}

\def\sfe{{\sf e}}

   \def\vol{{\rm vol}}

\def\i{{\rm in }}

\def\m{{\bf m}}
 \def\f_H{{\bf w}}
 \def\f{{\bf f}}
 \def\a{{\bf a}}
\def\b{{\bf b}}
\def\R{\mathbb{R}}

\def\Z{\mathbb{Z}}
\def\g{{\bf a}}

 \def\F{\mathcal{F}}
\def\O{\mathcal{O}}
\def\I{\mathcal{I}}

   \def\vol{{\rm vol}}

\def\aa{{\bf a}}
 \def\f_H{{\bf w}}
 \def\f{{\bf f}}
 \def\a{{\bf a}}
\def\R{\mathbb{R}}

\def\Z{\mathbb{Z}}
\def\g{{\bf a}}

 \def\F{\mathcal{F}}
\def\O{\mathcal{O}}
\def\I{\mathcal{I}}

   \def\vol{{\rm vol}}

\def\ind{{\rm ind }}
\def\out{{\rm outd}}
\def\om{{\rm out}}

\def\aa{{\bf a}}
 \def\f_H{{\bf w}}
 \def\f{{\bf f}}
 \def\a{{\bf a}}

\def\R{\mathbb{R}}

\def\Z{\mathbb{Z}}
\def\g{{\bf a}}

 \def\F{\mathcal{F}}
\def\O{\mathcal{O}}
\def\I{\mathcal{I}}

\newcommand{\be}{\begin{equation}}

\newcommand{\eee}{\end{equation}}

\newcommand{\bd}{\begin{definition}}
\newcommand{\ed}{\end{definition}}
\newcommand{\bt}{\begin{theorem}}
\newcommand{\et}{\end{theorem}}
\newcommand{\bl}{\begin{lemma}}
\newcommand{\el}{\end{lemma}}
\newcommand{\bp}{\begin{proposition}}
\newcommand{\ep}{\end{proposition}}
\newcommand{\bc}{\begin{corollary}}
\newcommand{\ec}{\end{corollary}}

\def\R{\mathbb{R}}

\begin{document}

\tikzstyle{w}=[label=right:$\textcolor{red}{\cdots}$] 
\tikzstyle{b}=[label=right:$\cdot\,\textcolor{red}{\cdot}\,\cdot$] 
\tikzstyle{bb}=[circle,draw=black!90,fill=black!100,thick,inner sep=1pt,minimum width=3pt] 
\tikzstyle{bb2}=[circle,draw=black!90,fill=black!100,thick,inner sep=1pt,minimum width=2pt] 
\tikzstyle{b2}=[label=right:$\cdots$] 
\tikzstyle{w2}=[]
\tikzstyle{vw}=[label=above:$\textcolor{red}{\vdots}$] 
\tikzstyle{vb}=[label=above:$\vdots$] 

\tikzstyle{level 1}=[level distance=3.5cm, sibling distance=3.5cm]
\tikzstyle{level 2}=[level distance=3.5cm, sibling distance=2cm]

\tikzstyle{bag} = [text width=4em, text centered]
\tikzstyle{end} = [circle, minimum width=3pt,fill, inner sep=0pt]

\title[Volumes and Ehrhart polynomials of flow polytopes]{Volumes and Ehrhart polynomials of flow polytopes}
\author{Karola M\'esz\'aros}
\address{Karola M\'esz\'aros, Department of Mathematics, Cornell University, Ithaca NY 14853  \newline karola@math.cornell.edu
}
\author{Alejandro H.  Morales}
\address{Alejandro H. Morales,
Department of Mathematics and Statistics, University of Massachusetts, Amherst MA 01003 \newline ahmorales@math.umass.edu
}
\thanks{KM is partially supported by a National Science
  Foundation Grant  (DMS 1501059). AHM was partially supported by an
AMS-Simons travel grant.}
\date{\today}

\maketitle
\vspace{-.15in}
\begin{center}
  \emph{Dedicated to the memory of Bertram Kostant}
\end{center}

\begin{abstract} The Lidskii formula for the type $A_n$ root system
  expresses the volume and Ehrhart polynomial of the flow polytope of
  the complete graph with nonnegative integer netflows
  in terms of Kostant partition functions. For every integer polytope
  the volume is the leading coefficient of the Ehrhart polynomial.
  The beauty of the Lidskii formula is the revelation that for these
  polytopes their Ehrhart polynomial function can be deduced from their
  volume function! Baldoni and Vergne generalized Lidskii's result for
  flow polytopes of arbitrary graphs $G$ and nonnegative integer
  netflows. While their formulas are combinatorial in nature, their
  proofs are based on residue computations. In this paper we construct
  canonical polytopal subdivisions of flow polytopes which we use to prove the Baldoni--Vergne--Lidskii formulas. In contrast with the  original computational proof of these formulas, our proof  reveal their geometry and combinatorics. We conclude by exhibiting enumerative properties of the Lidskii formulas via our  canonical polytopal subdivisions. 
\end{abstract}

\section{Introduction}
\label{sec:intro}

Flow polytopes are a well studied \cite{BV2, BSDL,gallo} and rich family of polytopes that include the
Pitman--Stanley polytope \cite{PS}, the Chan--Robbins--Yuen polytope
\cite{CRY} and  the Tesler polytope \cite{MMR}; see  \cite{AIM, CKM,
  MSZ} for more examples. Flow polytopes have been shown to have close
connections with  representation theory \cite{BV2}, diagonal harmonics
\cite{MMR} and Schubert polynomials \cite{MStD}, among others. Two
fundamental questions about any integer polytope $\mathcal{P}$,
including flow polytopes,  are: What is the volume of $\mathcal{P}$?
What is the Ehrhart polynomial  of $\mathcal{P}$? 

This paper is concerned with the  answers to these  question for the case of flow polytopes  $\F_G({\bf a})$ (defined in
Section \ref{sec:red}).  These questions were answered  by Lidskii
\cite{lidskii} for  $\F_{k_{n+1}}({\bf a})$, where $k_{n+1}$ denotes
the complete graph with $n+1$ vertices, and by Baldoni and Vergne \cite{BV2} for  $\F_G({\bf a})$, for arbitrary graphs $G$.   The Baldoni--Vergne proof relies on residue computations, leaving the combinatorial nature of their formulas a mystery. In this paper we demystify their beautiful formulas  appearing in Theorem \ref{thmA} below, by proving them via polytopal subdivisions of $\F_G({\bf a})$. We then  use the aforementioned polytopal subdivisions  to establish enumerative properties of the Baldoni--Vergne--Lidskii   formulas. For the   notation used in  Theorem \ref{thmA} consult Section \ref{sec:red}.

\begin{theorem}[Baldoni--Vergne--Lidskii  formulas {\cite[Thm. 38]{BV2}}]
\label{thmA} 
Let $G$ be a connected graph on the vertex set  $[n+1]$, with $m$
edges directed $i\to j$ if $i<j$, with at least one outgoing edge at
vertex $i$ for $i=1,\ldots,n$, and let
$\aa=(a_1,\ldots,a_n,-\sum_{i=1}^n a_i)$, $a_i \in \mathbb{Z}_{\geq 0}$. Then
\begin{align} \label{eq:vol}
\vol \F_G({\bf a}) &= \sum_{{\bf j}}
\binom{m-n}{j_1,\ldots,j_n} a_1^{j_1}\cdots
a_n^{j_n}\cdot K_{G}\left(j_1-\om_1, \ldots, j_n - \om_n,0\right),\\
K_{G}({\bf a}) &= \sum_{{{\bf j}}}
\binom{a_1+\om_1}{j_1}\cdots
\binom{a_{n}+\om_n}{j_{n}} \cdot  K_{G}\left(j_1-\om_1, \ldots, j_n -
                 \om_n,0\right), \label{eq:kost}\\
&= \sum_{{{\bf j}}}
\multiset{a_1-\i_1}{j_1}\cdots
\multiset{a_{n}-\i_n}{j_{n}} \cdot  K_{G}\left(j_1-\om_1, \ldots, j_n - \om_n,0\right), \label{eq:kostmultiset}    
\end{align} 
for $\om_i=\out_i-1$ and $\i_i=\ind_i-1$ where $\out_i$ and $\ind_i$ denote
the outdegree and indegree of  vertex $i$ in $G$. Each sum is over weak
compositions ${\bf j}=(j_1,j_2,\ldots,j_n)$ of $
m-n$ that are $\geq (\om_1,\ldots,\om_n)$ in dominance order and $\multiset{n}{k}:=\binom{n+k-1}{k}$.
\end{theorem}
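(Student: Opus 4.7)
The plan is to prove all three identities simultaneously by constructing a canonical polytopal subdivision of $\F_G(\mathbf{a})$ whose maximal cells are indexed by the weak compositions $\mathbf{j}$ of $m-n$ satisfying $j_i \geq \om_i$ for every $i$, and with the property that each cell $C_{\mathbf{j}}$ is a disjoint union of $K_G(j_1-\om_1,\ldots,j_n-\om_n,0)$ lattice copies of the product of scaled simplices $\prod_{i=1}^{n}\Delta_{j_i}^{a_i}$. Given such a subdivision, summing normalized volumes of cells yields \eqref{eq:vol}, summing lattice-point counts of (half-open) cells yields \eqref{eq:kost}, and \eqref{eq:kostmultiset} is obtained by applying the same construction to the edge-reversed graph.

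I would build the subdivision by processing the vertices $1,2,\ldots,n$ in order, in the spirit of the graph-reduction techniques previously developed for flow polytopes. At vertex $i$, the conservation equation $\sum_{e \text{ out of } i} f_e - \sum_{e \text{ into } i} f_e = a_i$ together with nonnegativity of the outgoing flows exhibits the $\out_i$ outgoing flows as a scaled simplex of dimension $\om_i$ over the incoming data, and refining $\F_G(\mathbf{a})$ according to the local distribution pattern at each vertex produces a canonical subdivision. The coordinate $j_i$ records the effective simplex dimension contributed by vertex $i$, the bound $j_i \geq \om_i$ reflects the fact that the generic local distribution already uses $\om_i$ simplex dimensions, and $\sum_i j_i = m-n$ matches the dimension of $\F_G(\mathbf{a})$. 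The integer data needed to reassemble a flow from its local simplex coordinates is a nonnegative integer flow on a reduced graph with netflow $(j_1-\om_1,\ldots,j_n-\om_n,0)$, giving the Kostant partition function multiplicity.

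With the subdivision in hand, the volume formula is immediate: the normalized volume of $\prod_i \Delta_{j_i}^{a_i}$ equals $\binom{m-n}{j_1,\ldots,j_n}\prod_i a_i^{j_i}$, so multiplying by the Kostant multiplicity and summing over $\mathbf{j}$ produces \eqref{eq:vol}. For \eqref{eq:kost}, I would pass to a half-open refinement (choosing, via a generic linear functional, which bounding facets each cell owns) so that $\F_G(\mathbf{a})\cap\Z^E$ decomposes as a disjoint union over cells; the half-open simplex product contributes $\prod_i\binom{a_i+\om_i}{j_i}$ lattice points per copy, and since the total count is $K_G(\mathbf{a})$, summing gives \eqref{eq:kost}. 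The multiset form \eqref{eq:kostmultiset} follows by applying the same argument to the edge-reversed graph $\bar G$: reversal interchanges $\out_i \leftrightarrow \ind_i$, turns $\binom{a_i+\om_i}{j_i}$ into $\multiset{a_i-\i_i}{j_i}$, and preserves the Kostant factor under the natural isomorphism between $\F_G(\mathbf{a})$ and the corresponding flow polytope on $\bar G$.

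The principal obstacle is the construction of the subdivision itself: verifying that the vertex-by-vertex refinement is a bona fide polytopal subdivision (with cells meeting along common faces), that each cell carries exactly the claimed product-of-simplices structure with the predicted Kostant multiplicity, and that the index set is precisely the set of weak compositions $\mathbf{j}$ with $j_i \geq \om_i$ and $\sum_i j_i = m-n$. The correct ordering of vertex reductions, and the correct bookkeeping between the continuous simplex data and the discrete residual-flow data, is where the heart of the argument lies. A secondary issue is the half-open refinement needed for \eqref{eq:kost}: one must select open versus closed facets consistently with the canonical subdivision, which should be achievable via a small generic perturbation of the defining inequalities.
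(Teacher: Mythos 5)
Your overall strategy---a canonical subdivision of $\F_G(\mathbf{a})$ into cells, each a
product of scaled simplices, with multiplicity given by a Kostant partition
function---is exactly the paper's spirit, and your reading of \eqref{eq:vol}
as a sum of cell volumes is right. However, several concrete pieces of
your proposal are wrong or unverified, and two of them are fatal as stated.

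First, the indexing set. You claim the cells are indexed by weak compositions
$\mathbf{j}$ of $m-n$ satisfying $j_i\geq \om_i$ \emph{for every $i$}. Since
$\sum_i\om_i = m-n = \sum_i j_i$, this componentwise condition forces
$j_i = \om_i$ for all $i$, leaving a single term. The correct condition is
the dominance-order inequality $j_1+\cdots+j_k \geq \om_1+\cdots+\om_k$ for
all $k$, and establishing it is one of the genuinely nontrivial steps
(Lemmas~\ref{lem:dominanceorder} and \ref{lem:dominanceorder-converse} in the
paper). The intuition you offer---that "the generic local distribution already
uses $\om_i$ simplex dimensions"---is precisely why a componentwise bound
looks plausible and why it is wrong: the vertex-by-vertex reduction shifts
simplex dimension between vertices, so only the partial sums are constrained.
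Relatedly, the paper processes vertices $n,n-1,\ldots,2$, not $1,\ldots,n$;
processing in your order never meets an incoming edge at vertex $1$ and is not
the construction that yields the dominance characterization.

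Second, your derivation of \eqref{eq:kostmultiset} by reversing edges does not
work. Edge reversal exchanges outdegrees and indegrees throughout, producing a
formula in which the Kostant factor is also written in terms of indegrees
(this is Corollary~\ref{cor:kost_indegree} in the paper), which is a
\emph{different} identity from \eqref{eq:kostmultiset}. In
\eqref{eq:kostmultiset} the Kostant factor and the index set are unchanged from
\eqref{eq:kost}---only the binomial weights change. The paper proves
\eqref{eq:kostmultiset} not by reversal but by re-encoding the same basic
reduction with a different algebraic identity (compare \eqref{rel} with
\eqref{relB}), which at first only gives the identity for $a_i\geq \ind_i(G)$,
and then extends to all $a_i\geq 0$ via polynomiality of the Kostant partition
function (Proposition~\ref{prop:polynomiality4kpf}). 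Your proposal has no
mechanism replacing this.

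Third, the lattice-point formula \eqref{eq:kost}. Your plan---use the same
subdivision, make it half-open by a generic functional, and assert that a
half-open cell of type $\mathbf{j}$ contains $\prod_i\binom{a_i+\om_i}{j_i}$
lattice points---is an appealing and genuinely different route from the
paper's, which instead splits each compounded reduction into basic reductions
and tracks shifts through an algebraic identity (and here is where the
Elliott--MacMahon flavor enters). But your key assertion is not obviously
true: a closed cell $\prod_i a_i\Delta_{j_i}$ has
$\prod_i\binom{a_i+j_i}{j_i}$ lattice points, which is not
$\prod_i\binom{a_i+\om_i}{j_i}$ unless $j_i=\om_i$. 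The paper gets the
correct binomials from a \emph{translated} (not merely a half-open) copy of
the simplex product, with the translation determined by the path in the basic
reduction tree (Proposition~\ref{prop:mon4H} and
Lemma~\ref{lem:kost_full_leaves}). You would need to show that a suitable
half-open refinement reproduces exactly these shifts across all cell types,
which is considerably more delicate than a generic perturbation.

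In short: the skeleton (canonical subdivision, Kostant multiplicities,
product-of-simplices cells, sum volumes/lattice points) matches the paper, but
the index set is wrong as written, the claimed proof of
\eqref{eq:kostmultiset} is invalid, and the half-open argument for
\eqref{eq:kost} asserts a count you have not established.
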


In \eqref{eq:kost} $K_G({\bf a})$ denotes  the {\em Kostant partition function} of the graph $G$, which equals the number of lattice points of $\F_{G}({\bf a})$, as explained in    Section \ref{sec:red}.  The Ehrhart function of an integer polytope  $\mathcal{P}$  counts the number of lattice
points of the dilated polytope $t \mathcal{P}$, and it  is a
polynomial in $t$.  The coefficient of the highest
degree term of the Ehrhart  polynomial gives the volume of the polytope. The magic of the  Baldoni--Vergne--Lidskii formulas is that  for flow polytopes $\F_{G}({\bf a})$, their Ehrhart polynomial $K_G(t {\bf a})$ can be deduced from their volume function!

 The dominance order characterization of  the compositions {\bf j} in
 Theorem \ref{thmA} is  due to  Postnikov and Stanley \cite{St-trans}. Postnikov and Stanley also observed that  a proof of \eqref{eq:kost} can be obtained via  the judicious use of the {Elliott--MacMahon
  algorithm}  \cite{St-trans}.  We use  subdivisions of flow
polytopes to prove Theorem \ref{thmA},  explaining the summands  in
the RHS of \eqref{eq:vol}
and  \eqref{eq:kost} geometrically: each composition {\bf j} encodes
a type of cell of the subdivision, the Kostant partition function
encodes the number of times that type of cell appears in the
subdivision, the rest of the summand corresponds to the volume or lattice point contribution of that type of cell
(see Figure~\ref{fig:exflowpolys}).  To complete our polytopal proof of  \eqref{eq:kost}, we also need to  invoke the   Elliott--MacMahon
  algorithm, similar to the work of Postnikov and Stanley.  
  
   Our  subdivisions  of flow polytopes $\F_{G}({\bf a})$ generalize the Postnikov--Stanley subdivision 
 of the flow polytope $\F_G(1,0,\ldots,0,-1)$
(e.g. see \cite[\S 6]{MM}). 
 We refer to our subdivisions as the {\em
  canonical subdivision} of $\F_G(\a)$. We call the full dimensional
polytopes in the canonical subdivisions   \textit{cells}. We say that
two cells are of the same type if they are encoded by the same
composition ${\bf j}$. In Section~\ref{sec:numpieces} (see Theorems~\ref{cor:numtypescells} and \ref{thm:numcells}) we derive the following formulas for the
number of types of cells  and the number of cells
of the canonical subdivision of $\F_G(\a)$. 

\begin{theorem} \label{thm:intro_num_cells} 
Let $G$ be a graph with vertex set  $[n+1]$ and $\aa=(a_1,\ldots,a_n,-\sum_{i=1}^n a_i)$, $a_i \in \mathbb{Z}_{> 0}$. The number $N$ of types of cells in the
canonical subdivision of $\F_{G}(\a)$ is given by the determinant 
\[
N=\det \left[ \binom{\om_{i+1}+\cdots+\om_{n} + 1}{i-j+1} \right]_{1\leq i,j\leq n-1},
\]
and the number $M$ of cells of the canonical subdivision of $\F_{G}(\a)$ equals
\[
M = \vol \F_{G^\star}(1,0,\ldots,0,-1).
\]
where $G^{\star}$ is obtained
from $G$ by adding a vertex $0$ adjacent to vertices $i=1,2,\ldots,n$ of $G$.
\end{theorem}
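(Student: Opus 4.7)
The plan is to handle the two formulas separately. The expression for $M$ will follow from Theorem~\ref{thmA} applied to $G^{\star}$ together with a flow decomposition, while the determinant for $N$ will follow from a lattice-path reformulation combined with the Lindstr\"om--Gessel--Viennot (LGV) lemma.

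For the formula for $M$: By the combinatorial structure of the canonical subdivision developed in Section~\ref{sec:numpieces}, each type of cell indexed by a valid composition ${\bf j}$ appears exactly $K_G(j_1-\om_1,\ldots,j_n-\om_n,0)$ times, so
\[
M \;=\; \sum_{{\bf j}} K_G(j_1-\om_1,\ldots,j_n-\om_n,0),
\]
summed over weak compositions of $m-n$ dominating $(\om_1,\ldots,\om_n)$. I would then apply the Lidskii formula \eqref{eq:vol} to $G^{\star}$ with netflow $(1,0,\ldots,0,-1)$: because $a_i^{\star}=0$ for $i\geq 1$, every summand vanishes except the one with $j_1^{\star}=\cdots=j_n^{\star}=0$ and $j_0^{\star}=m-1$, yielding
\[
\vol\F_{G^{\star}}(1,0,\ldots,0,-1) \;=\; K_{G^{\star}}(m-n,-\om_1,\ldots,-\om_n,0).
\]
Finally, I would decompose a flow on $G^{\star}$ with this netflow by conditioning on its values $(f_{01},\ldots,f_{0n})$ on the $n$ new edges out of vertex $0$: these values form a weak composition of $m-n$, and the restriction to $G$ is a flow with netflow $(f_{01}-\om_1,\ldots,f_{0n}-\om_n,0)$. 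Since $K_G$ vanishes on infeasible netflows (which automatically enforces the dominance condition on ${\bf j}=(f_{01},\ldots,f_{0n})$), summing over all such tuples recovers precisely the above expression for $M$.

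For the determinant formula for $N$: Substituting $t_i := (m-n) - (j_1+\cdots+j_i)$ rewrites the dominance condition as $0 \leq t_i \leq c_i := \om_{i+1}+\cdots+\om_n$ together with $t_1 \geq t_2 \geq \cdots \geq t_{n-1}$. Hence $N$ equals the number of integer partitions $\lambda=(t_1,\ldots,t_{n-1})$ fitting inside the Young diagram with column heights $c_1 \geq c_2 \geq \cdots \geq c_{n-1}$. I would encode such partitions as $(n-1)$-tuples of non-intersecting lattice paths with unit east and north steps, from sources $A_j := (-j,\,j)$ to sinks $B_j := (c_j-j,\,j+1)$: the path $P_j$ makes $c_j$ east steps and exactly one north step (at $x$-coordinate $\lambda_j - j$), and a short check shows that the non-intersection condition is equivalent to weak decrease of $\lambda$. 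The number of unrestricted paths from $A_j$ to $B_i$ is $\binom{c_i+1}{i-j+1}$, and LGV then yields the claimed determinant.

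The main obstacle is the LGV step in the $N$ argument: choosing the coordinates so that the entries of the Lindstr\"om determinant match exactly $\binom{\om_{i+1}+\cdots+\om_n+1}{i-j+1}$ and verifying that the non-intersection condition bijects non-intersecting tuples with partitions fitting in the prescribed staircase. The formula for $M$, in contrast, is an almost immediate consequence of Theorem~\ref{thmA} once $M$ has been identified with $\sum_{{\bf j}} K_G(j_1-\om_1,\ldots,j_n-\om_n,0)$ through the canonical subdivision.
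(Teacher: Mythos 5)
Your proposal is correct, and both halves essentially succeed. Let me compare your route to the paper's.

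For $M$: you reproduce the paper's argument (Theorem~\ref{thm:numcells}, equivalence of (a), (b), (c), (d)) precisely, just phrased in one continuous chain rather than as a chain of equalities. The identification $M=\sum_{\bf j}K_G(j_1-\om_1,\ldots,j_n-\om_n,0)$ is the paper's (a)$=$(b); your specialization of the Lidskii formula \eqref{eq:vol} to $G^{\star}$ at netflow $(1,0,\ldots,0,-1)$ is exactly how the paper derives \eqref{eq:vol10-1caseOut}, used in (c)$=$(d); your flow-conditioning on the edges $(0,i)$ is the paper's (b)$=$(c). (Incidentally, your first-coordinate value $m-n$ for $K_{G^{\star}}$ is correct; the displayed $n-m$ and the mention of $G^{\circ}$ in the paper's proof of (b)$=$(c) are typos.)

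For $N$: the reduction is the same as the paper's Theorem~\ref{cor:corrlatptsPS} — your substitution $t_i=(m-n)-(j_1+\cdots+j_i)$ and the resulting box-and-monotonicity conditions $0\le t_i\le c_i$, $t_1\ge\cdots\ge t_{n-1}$ encode the same object the paper describes as lattice points of $\PS(\om_n,\ldots,\om_2)$, equivalently plane partitions of the staircase shape with entries at most $2$. Where you diverge is the last step: the paper outsources the determinant to Pitman--Stanley's Theorem~12 (\emph{Theorem}~\ref{thm:detlatptsPS}), whereas you re-derive that determinant directly from the Lindstr\"om--Gessel--Viennot lemma. That is a legitimate, more self-contained alternative (and is essentially how the cited Pitman--Stanley count is obtained in the first place). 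Your coordinates are consistent: the path count from $A_j=(-j,j)$ to $B_i=(c_i-i,i+1)$ is $\binom{c_i+1}{i-j+1}=\binom{\om_{i+1}+\cdots+\om_n+1}{i-j+1}$, and the one-north-step encoding forces $\lambda_{j+1}\le\lambda_j$ from non-intersection as you sketch. The only thing to pin down, as you yourself flag, is that the identity permutation is the unique one producing non-intersecting families in this configuration of sources and sinks; with the sources on a shifted antidiagonal and the unit-height paths this follows by a routine argument, so the gap you identify is a matter of bookkeeping rather than substance.
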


We note that while Theorem \ref{thmA} is stated for outdegrees, there are analogues of \eqref{eq:vol} and
\eqref{eq:kost} in terms of indegrees of $G$ obtained by reversing the
digraph $G$.  We state the volume formula here.

\begin{corollary} \label{cor:vol_indegree}
Let $G$ be a graph on the vertex set  $[n+1]$ with $m$ edges directed $i\to j$ if $i<j$, with at least one incoming edge at
vertex $i$ for $i=2,\ldots,n+1$, and 
 $\b=(\sum_{i=1}^{n} b_i, -b_1,\ldots,-b_{n-1},-b_n)$ with  $b_i \in
 \mathbb{Z}_{\geq 0}$,
 for $i=1,\ldots,n$. Then 
\begin{equation} \label{eq:vol_indegree}
\vol \F_G(\b) = \sum_{{\bf j}}
\binom{m-n}{j_1,\ldots,j_n} b_1^{j_1}\cdots b_n^{j_n} \cdot K_G(0,\i_2-j_1,\ldots,\i_{n+1}-j_n),
\end{equation}
where $\i_i=\ind_i-1$ and $\ind_i$ is the indegree of vertex $i$ in
$G$, and the sum is over weak compositions ${\bf
  j}=(j_1,j_2,\ldots,j_n)$ of $m-n$ that are $\leq (\i_2,\ldots,\i_{n+1})$ in dominance order. 
\end{corollary}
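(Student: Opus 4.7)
The plan is to derive Corollary \ref{cor:vol_indegree} directly from Theorem \ref{thmA} by applying the standard \emph{arrow-reversal} symmetry of flow polytopes. Given $G$ on $[n+1]$ as in the statement, I would form a new graph $G^{\vee}$ on $[n+1]$ in two steps: first reverse every edge of $G$, then relabel each vertex $i$ by $n+2-i$. The new graph $G^{\vee}$ has $m$ edges, all directed from smaller to larger index, and the hypothesis ``$G$ has at least one incoming edge at each of $2,\ldots,n+1$'' translates exactly to ``$G^{\vee}$ has at least one outgoing edge at each of $1,\ldots,n$,'' so Theorem \ref{thmA} applies to $G^{\vee}$.

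Under this operation I would track four pieces of data: (i)~out- and indegrees swap, so the outdegree parameters for $G^{\vee}$ satisfy $\om^{\vee}_i=\i_{n+2-i}$; (ii)~the edge-labeling bijection identifies $\F_G(\b)$ with $\F_{G^{\vee}}(\b^{\vee})$ for $\b^{\vee}=(b_n,b_{n-1},\ldots,b_1,-\sum_i b_i)$, so their volumes agree; (iii)~by the same bijection, Kostant partition functions transform by
\[
K_{G^{\vee}}(c_1,\ldots,c_n,c_{n+1}) \;=\; K_G(-c_{n+1},-c_n,\ldots,-c_1);
\]
(iv)~the sum in Theorem \ref{thmA} applied to $G^{\vee}$ runs over weak compositions ${\bf j}^{\vee}$ of $m-n$ with ${\bf j}^{\vee}\geq(\i_{n+1},\i_n,\ldots,\i_2)$ in dominance order.

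Next I would make the reindexing substitution $j_i := j^{\vee}_{n+1-i}$ in Theorem \ref{thmA} applied to $G^{\vee}$ and $\b^{\vee}$. The multinomial factor becomes $\binom{m-n}{j_1,\ldots,j_n} b_1^{j_1}\cdots b_n^{j_n}$ directly, and the Kostant factor becomes
\[
K_{G^{\vee}}(j^{\vee}_1-\om^{\vee}_1,\ldots,j^{\vee}_n-\om^{\vee}_n,0) \;=\; K_G(0,\,\i_2-j_1,\,\i_3-j_2,\,\ldots,\,\i_{n+1}-j_n)
\]
by (iii). Combined with (ii), this matches the right-hand side of \eqref{eq:vol_indegree}.

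The one point that needs genuine care, and essentially the only possible obstacle, is the conversion of the dominance condition. The constraint ${\bf j}^{\vee}\geq(\i_{n+1},\ldots,\i_2)$ reads $j^{\vee}_1+\cdots+j^{\vee}_k \geq \i_{n+1}+\cdots+\i_{n+2-k}$ for all $k$. Since both ${\bf j}^{\vee}$ and $(\i_{n+1},\ldots,\i_2)$ are weak compositions of $m-n$ (using $\sum_{i=2}^{n+1}\ind_i=m$), subtracting these partial sums from the total and using $j_i=j^{\vee}_{n+1-i}$ converts the inequality into $j_1+\cdots+j_{n-k} \leq \i_2+\cdots+\i_{n+1-k}$ for all $k$, i.e.\ ${\bf j}\leq(\i_2,\ldots,\i_{n+1})$ in dominance order. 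This is exactly the index set in the statement, completing the proof.
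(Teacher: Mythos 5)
Your proposal is correct and is essentially the approach the paper itself takes: the paper's proof of Corollary~\ref{cor:vol_indegree} forms the same reversed-and-relabeled graph (denoted $G^r$ there) and uses Proposition~\ref{prop:symfp} for the volume identity, applies the Lidskii formula~\eqref{eq:vol} to $G^r$, and then invokes Corollary~\ref{cor:symkpg} to convert $K_{G^r}$ back to $K_G$. Your write-up is somewhat more careful than the paper's (you do the reindexing of ${\bf j}$ explicitly and spell out the conversion of the dominance condition, which the paper elides), but no new idea is involved.
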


Two important relations between the volume of a flow polytope and the number of lattice points of a related flow polytope can be deduced from the volume formulas \eqref{eq:vol} and \eqref{eq:vol_indegree} when we specialize to $\a=(1,0,\ldots,0,-1)$: 

\begin{corollary}[\cite{BV2,PS}] \label{cor:vol10-1case}
For a graph $G$ on the vertex set $[n+1]$ we have that 
\begin{align}
\vol \F_G(1,0,\ldots,0,-1) &= K_G(m-n-\om_1,-\om_2,\ldots,-\om_n,0), \label{eq:vol10-1caseOut}\\
&= K_G(0,\i_2,\i_3,\ldots,\i_n,-m+n+\i_{n+1}) \label{eq:vol10-1caseIn},
\end{align}
where $\om_i=\out_i-1$, $\i_i=\ind_i-1$ and $\out_i$, $\ind_i$ denote
the outdegree and indegree of  vertex $i$ in $G$. 
\end{corollary}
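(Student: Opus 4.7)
The plan is to substitute $\a = (1,0,\ldots,0,-1)$ directly into the volume formulas \eqref{eq:vol} and \eqref{eq:vol_indegree} and observe that, in each case, only a single weak composition $\mathbf{j}$ contributes to the sum.

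For the first identity, I would apply \eqref{eq:vol} with $a_1 = 1$ and $a_2 = \cdots = a_n = 0$. The monomial $a_1^{j_1} \cdots a_n^{j_n}$ then vanishes unless $j_1 = m-n$ and $j_2 = \cdots = j_n = 0$, so only the composition $\mathbf{j} = (m-n,0,\ldots,0)$ can contribute. The next step is to verify that this $\mathbf{j}$ does satisfy the dominance condition $\mathbf{j} \geq (\om_1,\ldots,\om_n)$: since each vertex $i \leq n$ has at least one outgoing edge, $\om_i = \out_i - 1 \geq 0$ and $\sum_{i=1}^n \om_i = m - n$, so $j_1 + \cdots + j_k = m-n \geq \om_1 + \cdots + \om_k$ for every $k$. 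The multinomial coefficient equals $1$, so the sum reduces to $K_G(m-n-\om_1,-\om_2,\ldots,-\om_n,0)$, as desired.

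For the second identity, I would apply \eqref{eq:vol_indegree} with the unique $\b$ whose coordinate vector equals $(1,0,\ldots,0,-1)$, namely $b_1 = \cdots = b_{n-1} = 0$ and $b_n = 1$. Now the monomial $b_1^{j_1} \cdots b_n^{j_n}$ survives only when $j_n = m-n$ and $j_1 = \cdots = j_{n-1} = 0$. Again I would check the dominance condition $\mathbf{j} \leq (\i_2,\ldots,\i_{n+1})$: since each vertex $i \geq 2$ has at least one incoming edge, $\i_i = \ind_i - 1 \geq 0$, and using $\sum_{i=2}^{n+1} \i_i = m - n$, the partial sums $j_1 + \cdots + j_k$ are $0$ for $k < n$ and $m-n$ for $k = n$, both bounded by the corresponding partial sums of $(\i_2,\ldots,\i_{n+1})$. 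The multinomial coefficient is again $1$, and the single surviving term yields $K_G(0,\i_2,\ldots,\i_n,\i_{n+1} - (m-n))$, which is the claimed expression.

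There is no real obstacle here; the argument is a routine specialization. The only point requiring care is the verification of the dominance conditions at the endpoints, which follows from the identities $\sum_{i=1}^n \om_i = m - n$ and $\sum_{i=2}^{n+1} \i_i = m - n$.
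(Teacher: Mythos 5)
Your proposal is correct and follows exactly the route the paper indicates: the corollary is obtained by specializing \eqref{eq:vol} at $\a=(1,0,\dots,0,-1)$ and \eqref{eq:vol_indegree} at $\b=(1,0,\dots,0,-1)$ (i.e.\ $b_1=\cdots=b_{n-1}=0$, $b_n=1$), observing that only one composition survives in each sum, and checking that this composition satisfies the relevant dominance condition. The paper offers no further proof beyond this remark, so there is nothing to compare beyond agreement.
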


Thus, this corollary states that the volume of
$ \F_G(1,0,\ldots,0)$ equals the number of integer points in either the
polytope $\F_G(m-n-\om_1,-\om_2,\ldots,-\om_n,0)$ or $\F_G(0,\i_2,\i_3,\ldots,\i_n,m-n-\i_{n+1})$.
 
We highlight two families of flow polytopes with known product formulas for their volumes. Such formulas are obtained by applying Theorem~\ref{thmA}. 
\medskip

\noindent {\bf I. Pitman-Stanley polytopes:} Denote by $\Pi_n$ the graph on the vertex set $[n+1]$ and edges 
\[
E(\Pi_n) := \{(i,i+1),(i,n+1) \mid
i=1,\ldots,n\}.
\]
Baldoni and Vergne \cite[\S
3.6]{BV2} showed that the
polytope $\F_{\Pi_n}({\bf a})$ is integrally equivalent to the Pitman--Stanley polytope
\cite{PS}. They showed the Lidskii formulas in this case correspond
exactly to the volume and Ehrhart polynomial formulas in \cite{PS}
both involving Catalan many terms (in the notation of Theorem~\ref{thm:intro_num_cells} we have  $N = C_n:=\frac{1}{n+1}\binom{2n}{n}$).  Moreover, 
\[
\vol \F_{\Pi_n}({\bf a}) = n! \sum_{\bf j} 
\frac{a_1^{j_1}}{j_1!}\cdots \frac{a_n^{j_n}}{j_n!},
\]
where the sum is over the $C_n$ many tuples $(j_1,\ldots,j_n)$
satisfying $j_1+\cdots +j_n =n$ and with partial sums $j_1\geq 1,
j_1+j_2\geq 2,\ldots$.

\medskip

\noindent {\bf II. The Baldoni-Vergne polytopes:} When $G$ is the
complete graph $k_{n+1}$ with $n+1$ vertices the polytope
$\F_{k_{n+1}}(\a)$ was studied by Baldoni--Vergne \cite{BV2}. For
special values of $\a$ these polytopes have interesting volumes:

\begin{itemize}
\item[(a)] when ${\bf a} =
(1,0,\ldots,0,-1)$, the polytope $\F_{k_{n+1}}(\a)$  is called
the Chan-Robbins-Yuen (CRY) polytope \cite{CRY}. By \eqref{eq:vol10-1caseIn} we obtain
\[
\vol \F_{k_{n+1}}(1,0,\ldots,0,-1) =
K_{k_{n+1}}(0,0,1,2,\ldots,n-2,-{\textstyle \binom{n-1}{2}}).
\]
Zeilberger \cite{Z1}  showed that $K_{k_{n+1}}(0,0,1,2,\ldots,n-2,-{\textstyle \binom{n-1}{2}})$ is the
product of the first $n-1$ Catalan numbers as conjectured by Chan,
Robbins and Yuen \cite{CRY}:
\begin{equation} \label{eq:volCRY}
\vol \F_{k_{n+1}}(1,0,\ldots,0,-1) = C_0C_1\cdots C_{n-2}.
\end{equation}

\item[(b)] when ${\bf a} = (1,1,\ldots,1,-n)$,  the polytope $\mathcal{F}_{k_{n+1}}(\a)$ is called the Tesler polytope
\cite{MMR} whose lattice points correspond to Tesler matrices, of
interest in diagonal harmonics \cite{Hag}. Applying \eqref{eq:vol} to
this polytope yields
\[
\vol \F_{k_{n+1}}(1,1,\ldots,1,-n)= \sum_{{\bf j} }
\binom{\binom{n}{2}}{j_1,j_2,\ldots,j_n} \cdot K_{k_{n+1}}(j_1-n+1,j_2-n+2,\ldots,j_n,0).
\]
By Corollary~\ref{thm:numcellssubdiv}, the canonical subdivision of this polytope has
$M = \prod_{i=0}^{n-1} C_i$ cells.  In \cite{MMR} Rhoades and the authors showed that the volume equals
\begin{equation} \label{eq:volTesler}
\vol \F_{k_{n+1}}(1,1,\ldots,1,-n) = f^{(n-1,n-2,\ldots,1)}\cdot C_0C_1\cdots C_{n-1}
\end{equation}
where $f^{(n-1,n-2,\ldots,1)}$ is the number of 
standard Young tableaux of shape $(n-1,n-2,\ldots,1)$.
\item[(c)] when ${\bf a} = (1,1,0,\ldots,0,-2)$, the polytope $\mathcal{F}_{k_{n+1}}(\a)$ was studied by Corteel, Kim and the first author  \cite{CKM}. Applying \eqref{eq:vol} to
this polytope only the terms with compositions ${\bf j} =
(j_1,\binom{n}{2} - j_1,0,\ldots,0)$ survive. They then show that the volume equals 
\[
\vol \F_{k_{n+1}}(1,1,0,\ldots,0,-2) = 2^{\binom{n}{2}-1} C_0C_1\cdots C_{n-2}.
\]
\end{itemize}

The common theme of the proofs of volumes for the polytopes described in (a), (b) and (c) above is the application of  the Lidskii volume formula, followed by   variations of the {\em Morris constant term identity} \cite[Thm. 4.13]{WM},\cite{ZLF}.

\begin{figure}
\includegraphics{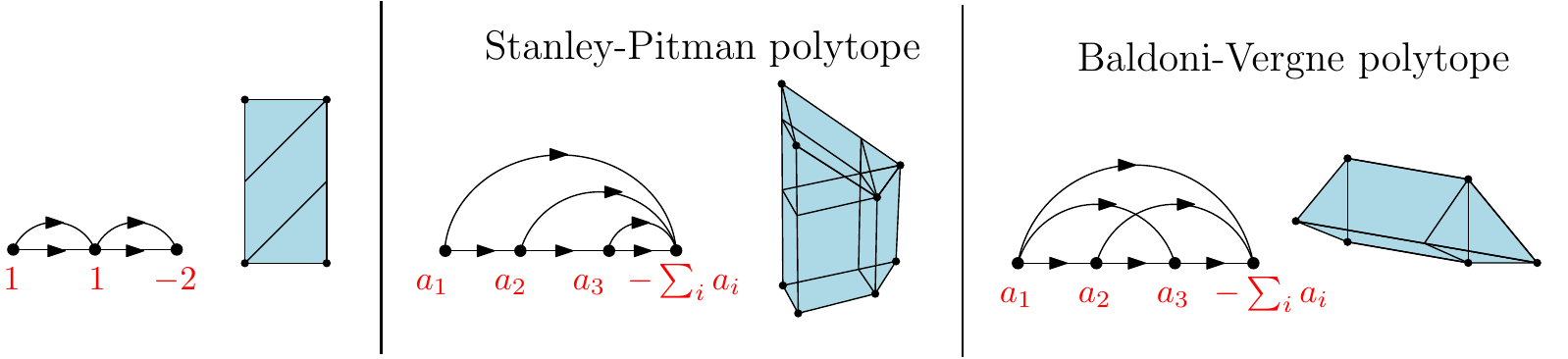}
\caption{Examples of graphs and their flow polytopes with the
  canonical subdivision. The second and thid example are instances of
  the Pitman--Stanley polytope and the Baldoni--Vergne polytope.}
\label{fig:exflowpolys}
\end{figure}

\subsection*{Outline} The outline of the paper is as follows. In Section \ref{sec:red} we explain the necessary definitions and background for flow polytopes. In Section \ref{sec:sub} we review the  subdivision of flow polytopes. In Sections \ref{sec:thm1} we prove \eqref{eq:vol}  via the canonical subdivision, while  in Section \ref{sec:kpf} we prove \eqref{eq:kost}. In Section~\ref{sec:numpieces} and \ref{sec:cayleytrick} we study the number of types of cells and the number of cells of subdivisions of flow polytopes with two different techniques: the canonical subdivision and the Cayley trick.

 \section{Flow polytopes $\F_G(\aa)$ and Kostant partition functions}
 \label{sec:red}

This section contains the background on flow polytopes and Kostant partition functions, following the exposition of \cite{MM}.  We also briefly revisit the Pitman--Stanley polytope mentioned in the introduction.

\label{subsec:flowpoly} Let $G$ be
a (loopless) directed acyclic connected graph on the vertex set $[n+1]$ with $m$ edges. To each edge $(i, j)$, $i< j$,  of $G$,  associate the positive
type $A_{n}$ root $\alpha(i,j)=e_i-e_j$.   Let $S_G := \{\{\alpha(e)\}\}_{e \in E(G)}$ be the
multiset of roots corresponding to the multiset of edges of $G$. Let  $M_G$ be the
$(n+1)\times m$ matrix whose columns are the vectors in $S_G$.  Fix an
integer  vector $\aa=(a_1,\ldots,a_n,-\sum_{i=1}^n a_i)$, $a_i \in
\mathbb{Z}_{\geq 0}$, referred to as the {\bf netflow}.  An {\bf
  $\a$-flow} $\f_G$ on $G$ is a vector $\f_G=(f(e))_{e \in E(G)}  \in
\R_{\geq 0}^{|E(G)|}$,  such that $M_G \f_G= \a$. That is, for all $1\leq i \leq n$, we have 

 \begin{equation} \label{eqn:flowA}
\sum_{e=(g,i) \in E(G)} f(e) + a_i = \sum_{e=(i,j) \in E(G)}
f(e)  \end{equation}
These equations imply that the netflow of vertex $n+1$ is
$-\sum_{i=1}^n a_i$.

Define the {\bf flow polytope} $\F_G(\g)$ associated to a  graph $G$ on the vertex set $[n+1]$ and the integer netflow vector $\aa$ as the set of all $\g$-flows $\f_G$ on $G$, i.e., $\F_G(\a)=\{\f_G \in \R^m_{\geq 0} \mid M_G \f_G = \g)\}$. If $\g$ is in the cone generated by
$S_G$ then $\F_G(\a)$ is not empty and if
$\g$ is in the interior of this cone then $\dim(\F_G(\a)) = m-n$
\cite[\S 1.1]{BV2}.

\medskip
The flow polytope $\F_G(\a)$ can be written as a Minkowski sum of flow
polytopes $\F_G(e_i-e_{n+1})$:

\begin{proposition}[{\cite[\S 3.4]{BV2}}] \label{prop:flow-minkowski}
For nonnegative integers $a_1,\ldots,a_n$ and $G$ a graph on the vertex set $[n+1]$ we have that
\begin{equation} \label{eq:flow-minkowski}
\F_G(\a) = a_1 \F_G(e_1-e_{n+1}) + a_2 \F_G(e_2-e_{n+1}) + \cdots +
a_n \F_G(e_n-e_{n+1}).
\end{equation}
\end{proposition}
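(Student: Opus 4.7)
The plan is to prove the two containments of \eqref{eq:flow-minkowski} separately. The containment $\supseteq$ is immediate from the linearity of $M_G$: if $\f^{(i)} \in \F_G(e_i - e_{n+1})$ for each $i$, then $\f := \sum_{i=1}^n a_i \f^{(i)}$ is a nonnegative vector with
\[
M_G \f = \sum_{i=1}^{n} a_i (e_i - e_{n+1}) = \a,
\]
so $\f \in \F_G(\a)$.

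The substance lies in the reverse containment $\subseteq$: I must show that every $\f \in \F_G(\a)$ can be written as $\sum a_i \f^{(i)}$ with $\f^{(i)} \in \F_G(e_i-e_{n+1})$. I would deduce this from a path decomposition of $\f$, namely the claim that
\[
\f = \sum_{i=1}^{n} \sum_{P \,:\, i \to n+1} c_P \, \chi_P,
\]
where the inner sum ranges over directed paths $P$ in $G$ from $i$ to $n+1$, the indicator vector $\chi_P \in \{0,1\}^m$ records the edges of $P$, the coefficients $c_P$ are nonnegative, and $\sum_{P : i \to n+1} c_P = a_i$ for each $i$.

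To produce the decomposition I would use a standard peeling argument. The hypothesis that every vertex $i \leq n$ has at least one outgoing edge, combined with the DAG structure, implies that every vertex reaches $n+1$ along outgoing edges (so each $\F_G(e_i - e_{n+1})$ is nonempty). Starting from $\f \in \F_G(\a)$ with $\f \neq 0$, I would pick an index $i$ with $a_i > 0$ and, using flow conservation at each intermediate vertex, trace forward along edges carrying strictly positive flow to obtain a directed path $P : i \to n+1$. Subtracting $\epsilon \chi_P$, with $\epsilon = \min\bigl(a_i,\,\min_{e \in P} f(e)\bigr)$, yields a new nonnegative flow whose netflow is $\a - \epsilon(e_i - e_{n+1})$. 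Each such step either zeros out at least one edge flow or exhausts some $a_i$, and since neither event can be reversed, the process terminates after at most $m + n$ rounds; collecting the paths and weights produced along the way gives the decomposition. (Termination is justified because the only nonnegative flow on a DAG with zero netflow is the zero flow, as one sees by induction along a topological order.)

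Given the decomposition, I would set
\[
\f^{(i)} := \frac{1}{a_i} \sum_{P : i \to n+1} c_P\, \chi_P \quad \text{if } a_i > 0,
\]
and choose $\f^{(i)}$ to be any element of $\F_G(e_i - e_{n+1})$ when $a_i = 0$. Each $\chi_P$ lies in $\F_G(e_i - e_{n+1})$ and the coefficients $c_P/a_i$ sum to $1$, so convexity gives $\f^{(i)} \in \F_G(e_i - e_{n+1})$; by construction $\sum_i a_i \f^{(i)} = \f$, completing the proof. The main obstacle is making the peeling argument airtight in the continuous setting: termination must be controlled by the fact that the number of edges with positive flow plus the number of indices with $a_i > 0$ strictly decreases at each step and is bounded above by $m + n$.
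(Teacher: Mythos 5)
Your proof is correct. The easy inclusion $\supseteq$ by linearity is the same as the paper's remark. For $\subseteq$, the paper only sketches an induction on the number of vertices with $a_i>0$; the peeling (flow-decomposition) argument you give is precisely the mechanism that would carry out that induction, just packaged iteratively rather than recursively: extracting a path $P:k\to n+1$ with weight $\min(a_k,\min_{e\in P}f(e))$ and iterating is exactly how one would prove the single inductive step $\F_G(\a)=a_k\F_G(e_k-e_{n+1})+\F_G(\a-a_k(e_k-e_{n+1}))$. The details you supply that the paper leaves implicit are all sound: the forward-tracing step uses flow conservation plus the DAG/outdegree hypotheses to guarantee the path reaches $n+1$ with all edges carrying positive flow; the potential function (number of positive edge flows plus number of positive $a_i$'s) is strictly decreasing and bounded by $m+n$; and the residual after all $a_i$ are exhausted is the zero flow by the topological-order argument. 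You also correctly flag the need for the standing hypothesis that each vertex $1,\dots,n$ has an outgoing edge, without which $\F_G(e_i-e_{n+1})$ could be empty and the Minkowski sum would degenerate. In short, this fills in the paper's sketch with essentially the intended argument.
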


\begin{proof}[Proof (sketch)]
By adding the flows edge-wise it follows that the Minkowski sum is
contained in $\F_G(\a)$. The other  inclusion can be shown 
by induction on the number of vertices with nonzero
netflow $a_i$. \end{proof}

The {\bf Kostant partition function}  $K_G$ evaluated at the vector $\a \in \Z^{n+1}$ is defined as

\begin{equation} \label{kost} K_G(\a)= \#  \Big\{ (f(e))_{e \in E(G)}
  \Bigm\vert  \sum_{e \in E(G)} f(e)  \alpha(e) =\a \textrm{ and } f(e) \in \Z_{\geq 0} \Big\},\end{equation}

\noindent where $\{\{\alpha(e)\}\}_{e\in E(G)}$ is the multiset of
positive roots corresponding to the multiset of
edges of $G$ defined above. In other words, $K_G(\a)$ is the number of ways to write the vector
$\a=(a_1,\ldots,a_n,-\sum_{i=1}^n a_i)$ as a $\mathbb{N}$-linear combination of the positive type $A_n$
roots $\alpha(e)$  corresponding to the edges of
$G$, without regard to order. Note that $K_G(\a)$ is the
number of lattice points of the flow polytope $\F_G(\a)$. 

The function $K_G(\a)$ is a piecewise polynomial function in
$a_1,a_2,\ldots,a_n$ (e.g. see \cite[Thm. 1.]{Sturmfels} and
\cite[Thm. 13]{BV2}). In fact, for vectors  
$(a_1,\ldots,a_n,-\sum_i a_i)$ in $\mathbb{Z}^{n+1}$ with  $a_i \geq
0$, the function $K_G(\a)$  is  a polynomial. 

\begin{proposition}[{\cite[Sec. 2.2]{BV2}}] \label{prop:polynomiality4kpf}
For $\a = (a_1,\ldots,a_n, -\sum_i a_i)$ in $\mathbb{Z}^{n+1}$ with
$a_i \geq 0$ for $i=1,\ldots,n$, the function $K_G(\a)$ is a
polynomial in $a_1,\ldots,a_n$.
\end{proposition}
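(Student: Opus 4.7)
The plan is to combine the Minkowski sum decomposition of Proposition~\ref{prop:flow-minkowski} with McMullen's classical theorem on the lattice point enumerator of a Minkowski sum of lattice polytopes.

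First I would observe that $K_G(\a) = |\F_G(\a) \cap \Z^m|$ by definition (see \eqref{kost}), so the claim reduces to showing that this lattice point count is polynomial in $a_1,\ldots,a_n$ on $\Z_{\geq 0}^n$. The matrix $M_G$ is a submatrix of the vertex--edge incidence matrix of a directed graph and is therefore totally unimodular, so every integer flow polytope $\F_G(\b)$ has integer vertices and is a lattice polytope. In particular each summand $P_i := \F_G(e_i - e_{n+1})$ appearing in Proposition~\ref{prop:flow-minkowski} is a lattice polytope in $\R^m$, and one has the lattice-polytope Minkowski identity
\[
\F_G(\a) = a_1 P_1 + a_2 P_2 + \cdots + a_n P_n.
\]

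Next I would invoke McMullen's theorem on mixed lattice point enumerators: for any lattice polytopes $P_1,\ldots,P_n \subset \R^m$ there exists a polynomial $\widetilde{K}(x_1,\ldots,x_n)$ of total degree at most $m$ such that
\[
\big| (a_1 P_1 + \cdots + a_n P_n) \cap \Z^m \big| = \widetilde{K}(a_1,\ldots,a_n) \quad \text{for all } (a_1,\ldots,a_n) \in \Z_{>0}^n.
\]
Applied to the Minkowski decomposition above this gives $K_G(\a) = \widetilde{K}(a_1,\ldots,a_n)$ on the open chamber $\Z_{>0}^n$.

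Finally I would extend polynomiality to the closed chamber $\Z_{\geq 0}^n$ by induction on the number of vanishing coordinates of $\a$. Setting some $a_i = 0$ collapses the Minkowski sum to one with $n-1$ lattice summands, so a second application of McMullen's theorem produces a polynomial $\widetilde{K}'$ in the remaining variables that agrees with $K_G$ on $\Z_{>0}^{n-1}$; since both $\widetilde{K}'$ and the restriction $\widetilde{K}|_{a_i = 0}$ are polynomials agreeing on the Zariski-dense set $\Z_{>0}^{n-1}$, they must coincide, so $\widetilde{K}$ already computes $K_G$ on that wall. Iterating this argument across lower-dimensional faces shows that $K_G$ agrees globally with $\widetilde{K}$ on $\Z_{\geq 0}^n$. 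The main external input is McMullen's theorem; granted this, the only potential obstacle is the book-keeping needed to match the polynomial pieces across the different faces $\{a_i = 0\}$, and this turns out to be purely formal via Zariski density.
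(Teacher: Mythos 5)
Your overall plan---use the Minkowski decomposition of Proposition~\ref{prop:flow-minkowski} and invoke McMullen's theorem on the lattice-point enumerator of a Minkowski combination of lattice polytopes---is a correct analytic strategy and a genuinely different route from what the paper points to. The paper itself does not prove this proposition but simply cites \cite[Sec.~2.2]{BV2}, where polynomiality on the closed cone $\{a_i \geq 0\}$ comes out of the general theory of vector partition functions and their chamber complexes. Your route is more elementary and geometric: the total unimodularity of $M_G$ correctly guarantees that each $\F_G(e_i - e_{n+1})$ is a lattice polytope, and the rest is outsourced to a classical polytopal theorem.

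There is, however, a genuine gap in the extension step to the walls $\{a_i = 0\}$. You assert that $\widetilde{K}|_{a_i = 0}$ and $\widetilde{K}'$ agree on $\Z_{>0}^{n-1}$ and then invoke Zariski density to identify them. But nothing established so far says anything about $\widetilde{K}$ on the face $\{a_i = 0\}$: by construction $\widetilde{K}$ agrees with $K_G$ only on the \emph{open} orthant $\Z_{>0}^n$, the face lies outside that set, and $K_G$ is a lattice-point count so no continuity argument is available. The agreement of $\widetilde{K}|_{a_i=0}$ with $\widetilde{K}'$ on any set---Zariski-dense or otherwise---is precisely what one would need to prove, so the argument is circular as written.

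The fix makes the extension step unnecessary: McMullen's theorem is in fact stated for all nonnegative integer dilation factors, not only strictly positive ones. With the standard convention $0 \cdot P = \{0\}$, setting $a_i = 0$ in the Minkowski sum $a_1 P_1 + \cdots + a_n P_n$ simply drops that summand, and McMullen's result (a statement about lattice-invariant simple valuations on nonnegative integer Minkowski combinations) gives a single polynomial $\widetilde{K}$ with
\[
\big| (a_1 P_1 + \cdots + a_n P_n) \cap \Z^m \big| = \widetilde{K}(a_1,\ldots,a_n) \quad \text{for all } (a_1,\ldots,a_n) \in \Z_{\geq 0}^n,
\]
which is exactly the conclusion of the proposition once combined with \eqref{eq:flow-minkowski}. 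Citing that form of the theorem closes the argument in one step.
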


The function $K_G(\a)$ has the following formal generating series:
\begin{equation}\label{gsKA}
\sum_{\g \in \Z^{n+1}} K_G(\a) x_1^{a_1}\cdots x_{n+1}^{-\sum_i a_i}
= {\prod_{(i,j)\in E(G)}} (1-x_i/x_j)^{-1},
\end{equation} 
where we order the variables $x_1<x_2<\ldots<x_{n+1}$ in order for the
expansion to be well defined.

By reversing the flow on a graph we obtain the following relation of
flow polytopes and the Kostant partition function. Given a directed graph $G$ with
vertices $[n+1]$ we denote
by $G^r$ the graph with vertices $[n+1]$ and edge $E(G^r)=\{(i,j) \mid
(n+2-j,n+2-i) \in E(G)\}$. That is, the graph obtained from $G$ by
reversing the edges and relabeling the vertices $i \mapsto n+1-i$. We say that two polytopes $P \subset \mathbb{R}^{n_1}$, $Q \subset \mathbb{R}^{n_2}$ are {\bf integrally equivalent} if there is an affine transformation $\varphi:\mathbb{R}^{n_1} \to \mathbb{R}^{n_2}$ that restricts to a bijection between  $P$ and $Q$ and between $\aff(P_1) \cap \mathbb{Z}^{n_1}$ and $\aff(Q) \cap \mathbb{Z}^{n_2}$. Integrally equivalent polytopes have the same face lattice, volume, and Ehrhart polynomials. We denote this equivalence by $P \equiv Q$.

\begin{proposition} \label{prop:symfp} For a graph $G$ on the vertex set $[n+1]$ and $(a_1,\ldots,a_n) \in \Z^n$: 
\[
\F_G(a_1,\ldots,a_n,-{\textstyle \sum_{i=1}^n a_i}) \equiv \F_{G^r}({\textstyle \sum_{i=1}^n a_i}, -a_{n},\ldots,-a_1).
\]
\end{proposition}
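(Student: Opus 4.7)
My plan is to exhibit an explicit affine (in fact linear) bijection between the two flow polytopes that simply renames edges according to the definition of $G^r$. Given the definition $E(G^r)=\{(i,j) \mid (n+2-j,n+2-i) \in E(G)\}$, the edges of $G$ and $G^r$ are in canonical bijection via $(u,v) \leftrightarrow (n+2-v,n+2-u)$, so I define $\varphi\colon \R^{E(G)}\to \R^{E(G^r)}$ by
\[
\varphi(\f)(i,j) = \f(n+2-j,\,n+2-i).
\]
This is a coordinate permutation, hence a lattice-preserving linear isomorphism of the ambient spaces. It preserves nonnegativity, so it takes the nonnegative orthant to itself, and the main thing left is to verify that it maps the affine flow subspace of $G$ cut out by the netflow $\a=(a_1,\ldots,a_n,-\sum a_i)$ onto the affine flow subspace of $G^r$ cut out by $\a'=(\sum a_i,-a_n,\ldots,-a_1)$.

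For the conservation check, let $\f$ be an $\a$-flow on $G$ and set $\f'=\varphi(\f)$. The outflow of vertex $k$ in $G^r$ is $\sum_{(k,j)\in E(G^r)} \f'(k,j) = \sum_{(n+2-j,n+2-k)\in E(G)} \f(n+2-j,n+2-k)$, which is exactly the inflow into vertex $n+2-k$ in $G$. Symmetrically, the inflow into $k$ in $G^r$ equals the outflow out of $n+2-k$ in $G$. So the netflow of $\f'$ at vertex $k$ is $(\text{inflow at } n+2-k \text{ in } G) - (\text{outflow at } n+2-k \text{ in } G) = -a_{n+2-k}$ (using the convention in \eqref{eqn:flowA} that outflow minus inflow equals the netflow). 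Plugging in $k=1,2,\ldots,n+1$ reproduces $\a'=(\sum a_i,-a_n,\ldots,-a_1)$, as required. Since $\varphi$ and $\varphi^{-1}$ have the same form, the correspondence is bijective.

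To promote this to an integral equivalence, I would observe that $\varphi$ restricts to a bijection between $\F_G(\a)$ and $\F_{G^r}(\a')$, that both polytopes have the same affine hull dimension, and that $\varphi$ preserves the integer lattice in both ambient spaces because it is given by a permutation matrix. Hence $\varphi$ restricts to a bijection between the lattice points of the two affine hulls, which is the remaining condition in the definition of $\equiv$.

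I do not expect any real obstacle: the entire argument is bookkeeping about how edge reversal and vertex relabeling interact with the in/out decomposition at each vertex. The one place to be careful is the sign convention, since ``reversing'' swaps inflow with outflow, which accounts for the negation of the coordinates of $\a$, and the vertex relabeling $i\mapsto n+2-i$ then explains the reversal of the tuple $(a_1,\ldots,a_n)$ and the promotion of $-\sum a_i$ to the first coordinate.
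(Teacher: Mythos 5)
Your map $\varphi$ is exactly the one the paper uses, $f'(i,j)=f(n+2-j,\,n+2-i)$, and your verification of the netflow and lattice conditions is just a more explicit version of the paper's brief argument. Correct, and essentially the same approach.
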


\begin{proof}
Given an $\a$-flow $\f_G=(f_e)_{e\in E(G)}$, let $\f_{G^r}=(f'(e))_{e\in E(G^r)}$ be the
flow defined by $f'(i,j) = f(n+2-j,n+2-i)$. Note that $\f_{G^r}$ is a
$\a^r$-flow where $\a^r=(\sum_{i=1}^n a_i, -a_n,\ldots,-a_1)$. The map $\f_G \mapsto
\f'_{G^r}$ is reversible and defines a correspondence between the
$\a$-flows and $\a^r$-flows. 
\end{proof}

If we restrict to counting integer points
in the two integrally equivalent polytopes in Proposition \ref{prop:symfp}, we obtain the following identity of Kostant partition functions:

\begin{corollary} \label{cor:symkpg}  For a graph $G$ on the vertex set $[n+1]$ and $(a_1,\ldots,a_n) \in \Z^n$: 

\[
K_G(a_1,\ldots,a_n,-{\textstyle \sum_{i=1}^n a_i}) = K_{G^r}({\textstyle \sum_{i=1}^n a_i}, -a_{n},\ldots,-a_1).
\]
\end{corollary}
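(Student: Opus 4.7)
The plan is to derive this as an immediate consequence of Proposition \ref{prop:symfp}, exploiting the fact, recorded just before that proposition, that integrally equivalent polytopes have the same number of lattice points. Specifically, since $K_G(\a)$ is defined as the number of lattice points of the flow polytope $\F_G(\a)$, and Proposition \ref{prop:symfp} establishes the integral equivalence
\[
\F_G(a_1,\ldots,a_n,-{\textstyle \sum_{i=1}^n a_i}) \equiv \F_{G^r}({\textstyle \sum_{i=1}^n a_i}, -a_{n},\ldots,-a_1),
\]
counting lattice points on both sides yields exactly the claimed identity. So the proof is essentially a one-line invocation.

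Alternatively, and perhaps more transparently, I would observe that the bijection constructed in the proof of Proposition \ref{prop:symfp}, sending an $\a$-flow $\f_G=(f(e))_{e\in E(G)}$ to the $\a^r$-flow $\f_{G^r}=(f'(e))_{e\in E(G^r)}$ defined by $f'(i,j) := f(n+2-j, n+2-i)$, restricts to a bijection on integer-valued flows. This is immediate because the map only permutes the coordinate labels and does not modify the numerical values of the flows; integrality of $\f_G$ is therefore equivalent to integrality of $\f_{G^r}$. Since integer $\a$-flows on $G$ are counted by $K_G(\a)$ (by the definition \eqref{kost}), this bijection gives the corollary.

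There is no real obstacle here: the content is entirely contained in Proposition \ref{prop:symfp}, and the corollary is just the specialization of integral equivalence to the lattice-point enumerator. The only thing to be mildly careful about is confirming that the affine transformation $\varphi$ in the integral equivalence restricts to a bijection of lattice points, but this is automatic from the definition of integral equivalence and has already been invoked in the statement preceding Proposition \ref{prop:symfp}.
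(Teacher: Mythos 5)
Your proposal matches the paper's own (one-line) argument: the corollary is obtained by restricting to lattice points in the two integrally equivalent polytopes of Proposition~\ref{prop:symfp}, using that $K_G(\a)$ counts lattice points of $\F_G(\a)$. The second, more explicit formulation via the coordinate-permuting bijection of flows is a fine elaboration but adds nothing beyond the paper's approach.
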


\medskip

We end our background on flow polytopes by giving a characterization of the vertices of $\F_G(\a)$.

\begin{proposition}[{\cite[Lemma 2.1]{hille}}] \label{prop:charvert}
 The vertices of $\F_G(\a)$ are characterized as $\a$-flows
whose support yields a subgraph of $G$ with no (undirected) cycles.
\end{proposition}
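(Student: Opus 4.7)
The plan is to prove both directions of the characterization via the standard equivalence that $\f$ is a vertex of $\F_G(\a)$ if and only if there is no nonzero vector $\g \in \R^{|E(G)|}$ such that both $\f+\e\g$ and $\f-\e\g$ lie in $\F_G(\a)$ for some $\e>0$. I would set things up so that such a perturbation $\g$ must be a \emph{circulation} on $G$ (i.e., $M_G\g=0$) supported on edges where $\f$ is strictly positive, and then translate the existence question into a purely graph-theoretic one.

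For the ``only if'' direction (support has an undirected cycle $\implies$ $\f$ is not a vertex), I would take an undirected cycle $C=(v_0,v_1,\ldots,v_k=v_0)$ in the support of $\f$. For each edge $e_i$ of $C$ joining $v_{i-1}$ and $v_i$, set $g(e_i)=+1$ if $e_i$ is oriented in $G$ from $v_{i-1}$ to $v_i$, and $g(e_i)=-1$ otherwise; set $g(e)=0$ for all edges not in $C$. A direct check at each vertex $v_i$ shows the signed edges around $v_i$ contribute zero netflow, so $M_G\g=0$. Since $f(e)>0$ on every edge of $C$, for sufficiently small $\e>0$ the vectors $\f\pm\e\g$ are nonnegative and satisfy $M_G(\f\pm\e\g)=\a$, so both lie in $\F_G(\a)$, and $\f$ is the midpoint of two distinct flows; hence $\f$ is not a vertex.

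For the ``if'' direction (support is acyclic as an undirected graph $\implies$ $\f$ is a vertex), suppose $\f=\tfrac12(\f_1+\f_2)$ with $\f_1,\f_2\in \F_G(\a)$. Since all of $\f,\f_1,\f_2\ge 0$, any edge $e$ with $f(e)=0$ forces $f_1(e)=f_2(e)=0$, so the support of $\g:=\f_1-\f_2$ is contained in the support of $\f$. Moreover $M_G\g=\a-\a=0$, i.e. $\g$ is a circulation. The key graph-theoretic fact I would invoke (or prove in one line) is that a nonzero circulation on a digraph must have a cycle in its undirected support: starting from any edge in the support, one can greedily walk forward/backward along nonzero-flow edges, which is possible because the netflow at each intermediate vertex is zero, and finiteness of the vertex set forces a repeat. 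Hence if $\g\neq 0$, its support -- and therefore the support of $\f$ -- contains an undirected cycle, contradicting our assumption. So $\f_1=\f_2=\f$, and $\f$ is a vertex.

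The argument is essentially routine; the main thing to get right is the sign convention in the ``only if'' perturbation (so that $M_G\g=0$ holds at each interior vertex of the cycle regardless of how the cycle edges are oriented in $G$) and a clean citation of the ``nonzero circulation contains a cycle'' lemma, which is the one non-trivial graph-theoretic input.
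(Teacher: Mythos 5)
The paper states this result purely as a citation to Hille's Lemma~2.1 and gives no proof of its own, so there is no internal argument to compare against. Your proof is correct and is the standard one underlying Hille's lemma: a flow $\f$ fails to be a vertex exactly when some nonzero circulation $\g$ supported on $\{e : f(e)>0\}$ admits a two-sided perturbation $\f\pm\e\g$ inside the polytope, and such a $\g$ exists if and only if the support of $\f$ contains an undirected cycle. The $\pm1$ sign assignment in your ``only if'' direction correctly produces a circulation ($M_G\g=0$ at every cycle vertex regardless of edge orientation), and your ``if'' direction correctly reduces to the fact that a nonzero circulation must have an undirected cycle in its support. One minor polish: that last fact is arguably cleanest by contraposition via a leaf argument---if the undirected support were a nonempty forest it would have a leaf vertex $v$, and the circulation condition at $v$ would force the unique incident support edge to carry zero flow---which sidesteps the bookkeeping your ``greedy walk'' needs when traversing an edge against its orientation or with negative perturbation value.
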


As we will see, the flow polytope $\F_G(e_1-e_{n+1})$ is of particular interest. Their vertices are particularly easy to describe. Given a path ${\sf p}$ in $G$ from vertex $1$ to vertex $n+1$, let $\f({\sf p})$ be the unit flow with support in ${\sf p}$. 

\begin{corollary}[{\cite[Cor. 3.1]{gallo}}] \label{prop:vertFG1000}
The vertices of $\F_G(e_1-e_{n+1})$ are the unit flows $\f({\sf p})$ where ${\sf p}$ is a path in $G$ from vertex $1$ to vertex $n+1$.
\end{corollary}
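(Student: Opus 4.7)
The plan is to use Proposition~\ref{prop:charvert} as the main tool, so the task reduces to showing that the support of any vertex of $\F_G(e_1-e_{n+1})$ is exactly the edge set of a directed path from $1$ to $n+1$, with unit flow on every edge.

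For the easy direction, given a directed path ${\sf p}\colon 1\to v_1\to\cdots\to v_{k-1}\to n+1$ in $G$, the unit flow $\f({\sf p})$ manifestly satisfies the conservation equations \eqref{eqn:flowA} for netflow $e_1-e_{n+1}$, and its support is the edge set of ${\sf p}$, which as a simple path has no undirected cycle. Proposition~\ref{prop:charvert} then immediately certifies $\f({\sf p})$ as a vertex.

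For the converse, let $\f$ be a vertex and let $H=\mathrm{supp}(\f)$, which by Proposition~\ref{prop:charvert} is a forest. I would analyze $H$ by the following three flow-conservation observations. First, any connected component $C$ of $H$ not meeting $\{1,n+1\}$ has all its vertices of netflow $0$; but every tree has a leaf $v$, and a leaf has a unique incident edge whose flow must equal the netflow at $v$ in absolute value, forcing that flow to be $0$ and contradicting membership in the support. Hence no such components exist. Second, the component $T$ containing $1$ must also contain $n+1$, else the sum of netflows over the vertices of $T$ would be $+1$, contradicting that flow within a component balances to $0$. Third, the same leaf argument applied to $T$ shows every leaf of $T$ is either $1$ or $n+1$, so $T$ is a tree with exactly two leaves and is therefore an (undirected) path $1=u_0,u_1,\ldots,u_k=n+1$.

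It remains to see that this undirected path is actually a directed path from $1$ to $n+1$ carrying unit flow on each edge. At an intermediate vertex $u_i$ the two incident edges in $T$ cannot both be outgoing (that would give positive outflow but zero inflow, violating conservation) nor both incoming; so one is incoming and one is outgoing, and the common flow value is preserved along the whole path. At the endpoint $1$, the unique incident edge must be outgoing (inflow cannot be negative) and carries flow $1$, so by propagation every edge of the path is directed $u_i\to u_{i+1}$ and carries flow $1$. Thus $\f=\f({\sf p})$ for the directed path ${\sf p}=u_0u_1\cdots u_k$. The main (albeit mild) obstacle is step three—namely ruling out an undirected zigzag along the path from $1$ to $n+1$—which is precisely what the one-in-one-out leaf propagation resolves.
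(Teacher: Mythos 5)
Your proof is correct, and the derivation chosen is essentially the only natural one given what the paper has set up: the statement is immediate from Proposition~\ref{prop:charvert} once one checks that an acyclic support of an $(e_1-e_{n+1})$-flow must be a single directed $1\to n+1$ path with unit flow. The paper itself does not reprove this corollary; it simply cites \cite[Cor.~3.1]{gallo}. Your argument — discard components of the support forest not touching $\{1,n+1\}$ by the leaf-edge-has-zero-flow observation, note the component of $1$ must contain $n+1$ by netflow balance, conclude it is a path since its only possible leaves are $1$ and $n+1$, and then propagate the one-in-one-out condition together with the forced outgoing unit flow at vertex $1$ to deduce a directed path with unit flow throughout — is complete and correct. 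The ``zigzag'' worry in your final step is in fact impossible here since all edges of $G$ are directed from smaller to larger vertex label, but your propagation argument rules it out without needing that extra hypothesis, so the proof is in fact slightly more robust than strictly necessary.
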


We now sketch the proof that the Pitman--Stanley polytope (mentioned in the introduction) is a flow polytope. Recall that 
the Pitman--Stanley polytope is 
\[
\PS(a_1,\ldots,a_n) := \{ (x_1,\ldots,x_n) \in \mathbb{R}^n \mid x_i \geq 0, x_1 + \cdots + x_i \leq a_1 + \cdots + a_i \text{  for  } i =1,\ldots,n\},
\]
for parameters $a_1,\ldots,a_n$ with $a_i \geq 0$. This polytope was defined and studied in \cite{PS} and it is an important example of a {\em generalized permutahedron} \cite{P}. In \cite[Ex. 16]{BV1}, Baldoni and Vergne showed that this polytope is integrally equivalent to the flow polytope $\F_{\Pi_n}(\a)$ defined in the introduction:

\begin{proposition}[\cite{BV1}]
The polytopes $\F_{\Pi_n}(a_1,\ldots,a_n,-\sum_i a_i)$ and $\PS(a_1,\ldots,a_n)$ are integrally equivalent. 
\end{proposition}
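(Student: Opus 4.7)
The plan is to exhibit an explicit affine map that witnesses the integral equivalence. First, I would fix the multigraph structure of $\Pi_n$: it has $2n$ edges, namely $(i,i+1)$ and $(i,n+1)$ for $i=1,\ldots,n$, with the two edges from $n$ to $n+1$ treated as distinct. This is consistent with $\dim \F_{\Pi_n}(\aa) = m-n = n = \dim \PS(a_1,\ldots,a_n)$.

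Next, I would coordinatize an $\aa$-flow on $\Pi_n$ by letting $y_i$ denote the flow on edge $(i,i+1)$ and $x_i$ the flow on edge $(i,n+1)$, for $i=1,\ldots,n$. Flow conservation \eqref{eqn:flowA} at vertex $1$ gives $a_1 = x_1 + y_1$, and at vertex $i$ for $2\leq i\leq n$ it gives $y_{i-1} + a_i = x_i + y_i$. Solving recursively,
\[
y_i \;=\; (a_1+\cdots+a_i)-(x_1+\cdots+x_i), \qquad i=1,\ldots,n.
\]
Consequently, the non-negativity conditions $x_i\geq 0$ and $y_i\geq 0$ become exactly the defining inequalities of $\PS(a_1,\ldots,a_n)$, while conservation at vertex $n+1$ is automatic (the total inflow there equals $\sum_i a_i$).

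Then I would define the affine map $\varphi:\mathbb{R}^{2n}\to\mathbb{R}^{n}$ by $\varphi(x_1,\ldots,x_n,y_1,\ldots,y_n)=(x_1,\ldots,x_n)$. The above computation shows $\varphi$ restricts to a bijection $\F_{\Pi_n}(\aa)\to\PS(a_1,\ldots,a_n)$, with inverse $(x_1,\ldots,x_n)\mapsto(x_1,\ldots,x_n,\,a_1-x_1,\,(a_1+a_2)-(x_1+x_2),\ldots)$, which is also affine. To check the lattice condition, note that $\aff \F_{\Pi_n}(\aa)$ is cut out by the conservation equations, all of which have integer coefficients with the coefficient of each $y_i$ equal to $\pm 1$; thus an integer choice of $(x_1,\ldots,x_n)$ (in $\aff \PS(a_1,\ldots,a_n)=\mathbb{R}^n$) forces integer values of the $y_i$, and vice versa. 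Hence $\varphi$ establishes a bijection between $\aff\F_{\Pi_n}(\aa)\cap\mathbb{Z}^{2n}$ and $\mathbb{Z}^n$ that restricts to a bijection of the polytopes.

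There is essentially no obstacle here: the only subtlety is bookkeeping about the multigraph (ensuring we really have $2n$ edges so that the dimensions match) and verifying that the $y$-coordinates are determined by the $x$-coordinates with integer coefficients, which is what delivers integral equivalence rather than merely affine equivalence.
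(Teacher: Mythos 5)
Your proof is correct and follows essentially the same route as the paper: the paper defines the map $\PS(a_1,\ldots,a_n)\to\F_{\Pi_n}(\aa)$ by setting $f(i,n+1)=x_i$ and $f(i,i+1)=(a_1+\cdots+a_i)-(x_1+\cdots+x_i)$, which is exactly the inverse of the projection $\varphi$ you construct. Your write-up adds a bit more detail on the lattice-point bijection and the multigraph edge count, but the underlying affine equivalence is identical.
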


\begin{proof}[Proof (sketch)]
The affine transformation $\varphi$ between the polytopes  $\PS(a_1,\ldots,a_n)$ and $\F_{\Pi_n}(\a)$ is defined as follows $\varphi:(x_1,\ldots,x_n) \mapsto \f_{\Pi_n}$ where 
\[
f(i,j) = \begin{cases}
x_i & \text{ if } j = n+1,\\
(a_1+\cdots + a_i) - (x_1+\cdots + x_i) & \text{ if } j=i+1.
\end{cases}
\]
\end{proof}

We note that when the parameters $a_i$ are positive integers the number of lattice points of $\PS(a_1,\ldots,a_n)$ counts certain {\em plane partitions} and is given by a determinant. 

\begin{theorem}[{\cite[Thm. 12]{PS}}] \label{thm:detlatptsPS}
For $(a_1,\ldots,a_n) \in \mathbb{N}^n$, the number of lattice points of the
Pitman--Stanley polytope $PS(a_1,\ldots,a_n)$ equals the number of plane partitions of shape $(a_1, a_1+a_2,\ldots, \sum_{i=1}^n a_i)$ with largest parts at most $2$. This number is given by the
determinant
\[
\#(\PS(a_1,\ldots,a_n) \cap \mathbb{Z}^n) = \det\left[
\binom{a_1+\cdots +a_{n-i+1}+1}{i-j+1}\right]_{1\leq i,j\leq n}.
\]
\end{theorem}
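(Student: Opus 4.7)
The plan is to re-express lattice points of $\PS(a_1,\ldots,a_n)$ as bounded monotone integer sequences, then apply the Lindström--Gessel--Viennot (LGV) lemma to identify their count with the displayed determinant; the plane partition identification will fall out as a byproduct of the same bijection.

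Set $A_i := a_1 + \cdots + a_i$. A lattice point $(x_1,\ldots,x_n) \in \PS(a_1,\ldots,a_n)\cap\mathbb{Z}^n$ is determined by its partial sums $u_i := x_1 + \cdots + x_i$, and these run over weakly increasing integer sequences $0 \leq u_1 \leq u_2 \leq \cdots \leq u_n$ with $u_i \leq A_i$. So the problem reduces to counting such monotone sequences.

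For the LGV setup I would take sources $P_j := (-j,\,j-1)$ and sinks $Q_j := (A_{n-j+1}-j,\,j)$ in $\mathbb{Z}^2$ for $1 \leq j \leq n$. A direct count shows that the number of unit east/north lattice paths from $P_j$ to $Q_i$ equals $\binom{A_{n-i+1}+1}{i-j+1}$, matching the $(i,j)$-entry of the matrix. Since the $P_j$'s and $Q_j$'s each lie on a diagonally ascending chain and are interleaved in the ``parallel'' orientation, for any non-identity permutation $\sigma$ of $\{1,\ldots,n\}$ every tuple of paths $P_j \to Q_{\sigma(j)}$ is forced to contain a crossing; hence the LGV lemma expresses the determinant as the number of non-intersecting tuples of paths $P_j \to Q_j$. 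Each such path has height change $1$ and so consists of a single N-step at some $x$-coordinate $c_j$; setting $u_{n-j+1} := c_j + j$, one checks geometrically that the non-intersecting condition at row $j$ is equivalent to $u_{n-j} \leq u_{n-j+1}$, and that the bounds $c_j \in [-j,\,A_{n-j+1}-j]$ become $0 \leq u_{n-j+1} \leq A_{n-j+1}$. This gives the bijection between non-intersecting tuples and the monotone sequences from the previous paragraph, hence with lattice points of $\PS$. The plane partition identification is obtained by viewing the two ``outlines'' traced by the N-steps as the nested layers $\mu \subseteq \nu \subseteq (A_1,\ldots,A_n)$ of a plane partition with parts at most $2$.

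The main technical obstacle is the geometric bookkeeping in the LGV step: one must carefully verify that the choice of coordinate shifts in the definitions of $P_j$ and $Q_j$ makes the non-crossing condition encode precisely both the monotonicity $u_1 \leq \cdots \leq u_n$ and the sharp bounds $u_i \leq A_i$, and that the interlacing of sources and sinks collapses the LGV signed sum to only the identity permutation. Once this is in place, both combinatorial identifications in the theorem follow at once from the LGV lemma.
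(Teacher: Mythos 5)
The paper does not prove this statement; it cites it as Theorem 12 from the Pitman--Stanley paper and uses it as a black box (for instance in Corollary~\ref{cor:numtypescells}). So there is no in-paper proof to compare against, and your proposal has to stand on its own.

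Your LGV argument for the determinantal count is essentially sound. Reformulating lattice points of $\PS(a_1,\ldots,a_n)$ via the partial sums $u_i$ as monotone sequences $0\le u_1\le\cdots\le u_n$ with $u_i\le A_i$ is correct. With your sources $P_j=(-j,\,j-1)$ and sinks $Q_j=(A_{n-j+1}-j,\,j)$, the number of NE lattice paths $P_j\to Q_i$ is indeed $\binom{A_{n-i+1}+1}{i-j+1}$, matching the matrix entries. The sources lie on the antidiagonal $x+y=-1$ and, using that $A_{n-i+1}-i$ is strictly decreasing in $i$, the sinks are strictly ordered in the same NW-to-SE sense; the "level" argument along lines $x+y=\mathrm{const}$ then forces a shared vertex for any non-identity permutation, so only the identity term survives in the LGV expansion. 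Since each path $P_j\to Q_j$ has a single N-step at abscissa $c_j\in[-j,\,A_{n-j+1}-j]$, the substitution $u_{n-j+1}=c_j+j$ converts the non-intersection of consecutive paths into $u_{n-j}\le u_{n-j+1}$ and converts the endpoint bounds into $0\le u_i\le A_i$, so the bijection with lattice points goes through.

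The one genuine flaw is the plane-partition identification at the end. You speak of "two outlines" and of nested layers $\mu\subseteq\nu\subseteq(A_1,\ldots,A_n)$, which would correspond to a $\{0,1,2\}$-filling; but each of your $n$ non-crossing paths carries exactly one N-step, so the data you extract is a single sequence $u=(u_1,\ldots,u_n)$, not a pair of nested partitions. A quick count confirms the mismatch: for $n=1$ the theorem asserts $a_1+1$ objects, while pairs $\nu\subseteq\mu\subseteq(a_1)$ number $\binom{a_1+2}{2}$. The correct reading of Pitman--Stanley's plane partitions here is a $\{1,2\}$-filling of the diagram with row lengths $A_1,\ldots,A_n$, weakly decreasing along rows and with the appropriate column-monotonicity for the increasing shape; then $u_i$ equals the number of $2$'s in row $i$, and the monotonicity $u_1\le\cdots\le u_n$ together with $u_i\le A_i$ is exactly the plane-partition condition. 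You should replace the "two outlines / $\mu\subseteq\nu$" sentence with this single-partition bijection; the rest of the argument then closes correctly.
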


\section{Subdividing flow polytopes}
\label{sec:sub}
This section explains our method of subdividing flow polytopes. We explain basic and compounded reduction rules (Sections \ref{subsec:basicsubdiv} and \ref{subsec:subdiv} respectively), and characterize the polytopes obtained in a subdivision of $\F_G(\a)$ via these rules (Section \ref{sec:Gms}).

\subsection{Basic subdivision of flow
  polytopes} \label{subsec:basicsubdiv} 

 Given   a graph $G$ on the vertex set $[n+1]$ and   $(a, i), (i, b) \in E(G)$ for some $a<i<b$, let   $G_1$ and $G_2$ be graphs on the vertex set $[n+1]$ with edge sets
  \begin{align*} 
E(G_1)&=E(G)\backslash \{(i, b)\} \cup \{(a, b)\},  \\
E(G_2)&=E(G)\backslash \{(a, i)\} \cup \{(a, b)\}.
\end{align*}

\medskip

\noindent We refer to replacing $G$ by $G_1$ and $G_2$ as above as the \textbf{basic reduction}, or BR for short; see Figure \ref{fig:subdivrules}. The main result regarding the  basic reduction is as follows:

\begin{figure}
\begin{equation} \label{R1} \tag{BR}
\includegraphics[height=2.5cm]{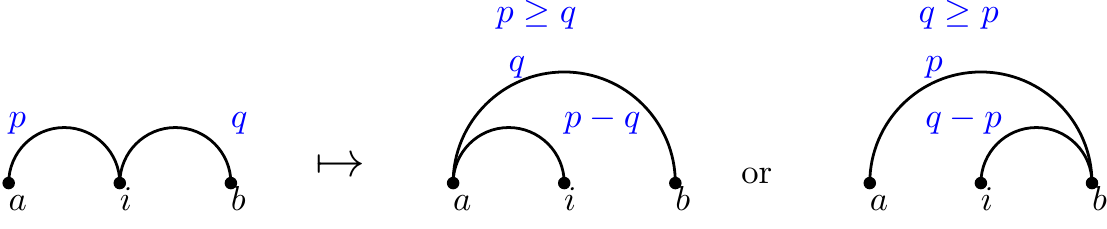}
\end{equation}
\caption{Basic reduction rule \eqref{R1}. The original edges have flow $p$ and $q$. The outcomes have reassigned flows to preserve the original netflow on the vertices.}
 \label{fig:subdivrules}
\end{figure}

 \begin{proposition}[Basic subdivision lemma] \label{red0} Given a  graph $G$  on the vertex set $[n+1]$,   $\aa \in \Z^{n}$, and two edges $e_1$ and $e_2$ of $G$ on which the basic reduction \eqref{R1}  can be performed yielding the graphs $G_1, G_2$, then 
 $$\F_G(\aa)= \mathcal{P}_1\bigcup \mathcal{P}_2 \quad   \text{ and } \quad \mathcal{P}_1^\circ \bigcap \mathcal{P}_2^\circ=\varnothing,$$
 
 \noindent where $ \mathcal{P}_i$ is integrally equivalent to  $\F_{G_i}(\aa)$, $ i \in [2]$, and  $\mathcal{P}^\circ$ denotes the interior of $\mathcal{P}$.  
\end{proposition}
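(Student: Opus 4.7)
The plan is to exhibit explicit integer-affine maps that split $\F_G(\a)$ into the two claimed pieces. Let $p := f(a,i)$ and $q := f(i,b)$ denote the flows on the two distinguished edges of a generic $\a$-flow $\f_G$, and set
$$\mathcal{P}_1 := \{\f_G \in \F_G(\a) : p \geq q\}, \qquad \mathcal{P}_2 := \{\f_G \in \F_G(\a) : p \leq q\}.$$
Then $\F_G(\a) = \mathcal{P}_1 \cup \mathcal{P}_2$, and since $\mathcal{P}_1 \cap \mathcal{P}_2 \subseteq \{p = q\}$ lies in a hyperplane, the interiors satisfy $\mathcal{P}_1^\circ \cap \mathcal{P}_2^\circ = \varnothing$. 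It remains to produce integral equivalences $\phi_i : \mathcal{P}_i \to \F_{G_i}(\a)$.

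I will describe $\phi_1$; the map $\phi_2$ is symmetric. Given $\f_G \in \mathcal{P}_1$, define $\phi_1(\f_G)$ by preserving every flow value on $E(G) \cap E(G_1)$ except on $(a,i)$, where the new flow becomes $p-q$, and on the new edge $(a,b)\in E(G_1)$, where we assign flow $q$. Geometrically, this reroutes the $q$ units that were passing through vertex $i$ directly from $a$ to $b$. A short check at the three affected vertices $a,i,b$ (no other vertex has its incident edges modified) confirms that the flow-conservation equation \eqref{eqn:flowA} continues to hold, so $\phi_1(\f_G)\in \F_{G_1}(\a)$. The inverse sends a flow $\g$ on $G_1$ with values $g(a,i)=p'$ and $g(a,b)=r$ back to the flow on $G$ with $(a,i)$-flow $p'+r$ and $(i,b)$-flow $r$, and lands in $\mathcal{P}_1$ because $p-q = p'\geq 0$. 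On the two moving coordinates $\phi_1$ acts by the unimodular map $(p,q)\mapsto (p-q,q)$, and the identity on all others, so both $\phi_1$ and its inverse are integer-affine and preserve the lattice. Consequently $\phi_1$ is an integral equivalence between $\mathcal{P}_1$ and $\F_{G_1}(\a)$.

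The only obstacle is bookkeeping: verifying the flow-conservation equations at $a$, $i$, $b$ after the rerouting and confirming that the maps respect the integer lattice in both directions. These verifications are elementary, but they deserve attention because the basic reduction will be iterated in Section~\ref{subsec:subdiv} to construct the canonical subdivision, and any imprecision at the one-step level would propagate through the whole induction.
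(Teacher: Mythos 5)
Your proof is correct and follows the standard argument for the basic reduction, which is essentially the one given in the references \cite{MM, MStD} that the paper points to: split $\F_G(\a)$ along the hyperplane $f(a,i)=f(i,b)$ and identify each half with $\F_{G_i}(\a)$ via the unimodular rerouting map $(p,q)\mapsto(p-q,q)$ (resp.\ $(p,q)\mapsto(p,q-p)$). The paper itself leaves this proposition's proof to the reader, so your write-out is exactly what was intended.
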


The proof of Proposition \ref{red0} is left to the reader. See
\cite{MM, MStD} for proofs of this lemma. Remark \ref{rem:interpretationFGT} expands more on   the integral equivalence; by abuse of notation we will generally refer to $ \mathcal{P}_i$ in Proposition \ref{red0} as  $\F_{G_i}(\aa)$, for $i=1,2$.

  We can encode
a series of basic reductions on a flow polytope $\F_{G}(\a)$ in a rooted
tree called the \textbf{basic reduction tree}, or BRT for short; see Figure \ref{fig:brt} for an example. The root of this tree is the original graph
$G$.  After doing a BR on the edges $(a, i), (i, b)$, $a<i<b$, the descendant nodes of the root are the graphs
$G_1, G_2$ as above. For each new node we repeat this
process to define its descendants. If a node of this tree has a graph
$H$ with no edges $(a, i), (i, b)$, $a<i<b$,  then the node is a
{\bf leaf} of the BRT.  

\subsection{Compounded subdivision of flow polytopes} \label{subsec:subdiv}
Repeated use of the basic subdivision lemma (Proposition \ref{red0}) yields the canonical subdivision of flow polytopes as we explain in Section~\ref{sec:thm1}. In this section we state the compounded subdivision lemma (Proposition \ref{lem:big_red}), which is the result of applying the basic
reduction rules repeatedly on the incoming and outgoing edges of a
fixed vertex of $G$. The compounded subdivision lemma is a refinement of the subdivision lemma given  in
\cite[\S 5]{MM}. To state the result we introduce the necessary notation following \cite{MM}. 

A {\bf bipartite noncrossing tree}  is a  tree with a
distinguished bipartition of vertices into {\bf left vertices}
$x_1,\ldots,x_{\ell}$ and {\bf right vertices $x_{\ell+1},\ldots,
  x_{\ell+r}$} with no pair of edges $(x_p,x_{\ell+q}),
(x_t,x_{\ell+u})$ where $p<t$ and $q>u$. 
Denote by
$\mathcal{T}_{L,R}$ the set of  bipartite noncrossing trees
where $L$ and $R$ are the ordered
sets $(x_1,\ldots,x_{\ell})$ and $(x_{\ell+1},\ldots,x_{\ell+r})$ respectively. Note that $\#
\mathcal{T}_{L,R}=\binom{\ell+r-2}{\ell-1}$, since they are in
bijection with weak compositions of $r-1$ into $\ell$ parts. Namely, a
tree $T$ in $\mathcal{T}_{L,R}$ corresponds to the composition
$(b_1,\ldots,b_\ell)$ of $r -1$, where $b_i$ denotes
the number of edges incident to the left vertex $x_{\ell+i}$ in $T$ minus $1$. 

\begin{example}
The bipartite noncrossing tree encoded by the composition $(0,2,1,1)$ is the following:
\begin{center}
\includegraphics[width=1.3cm]{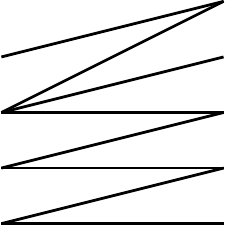}    
\end{center}
\end{example}

\begin{figure} 
\includegraphics[scale=.85]{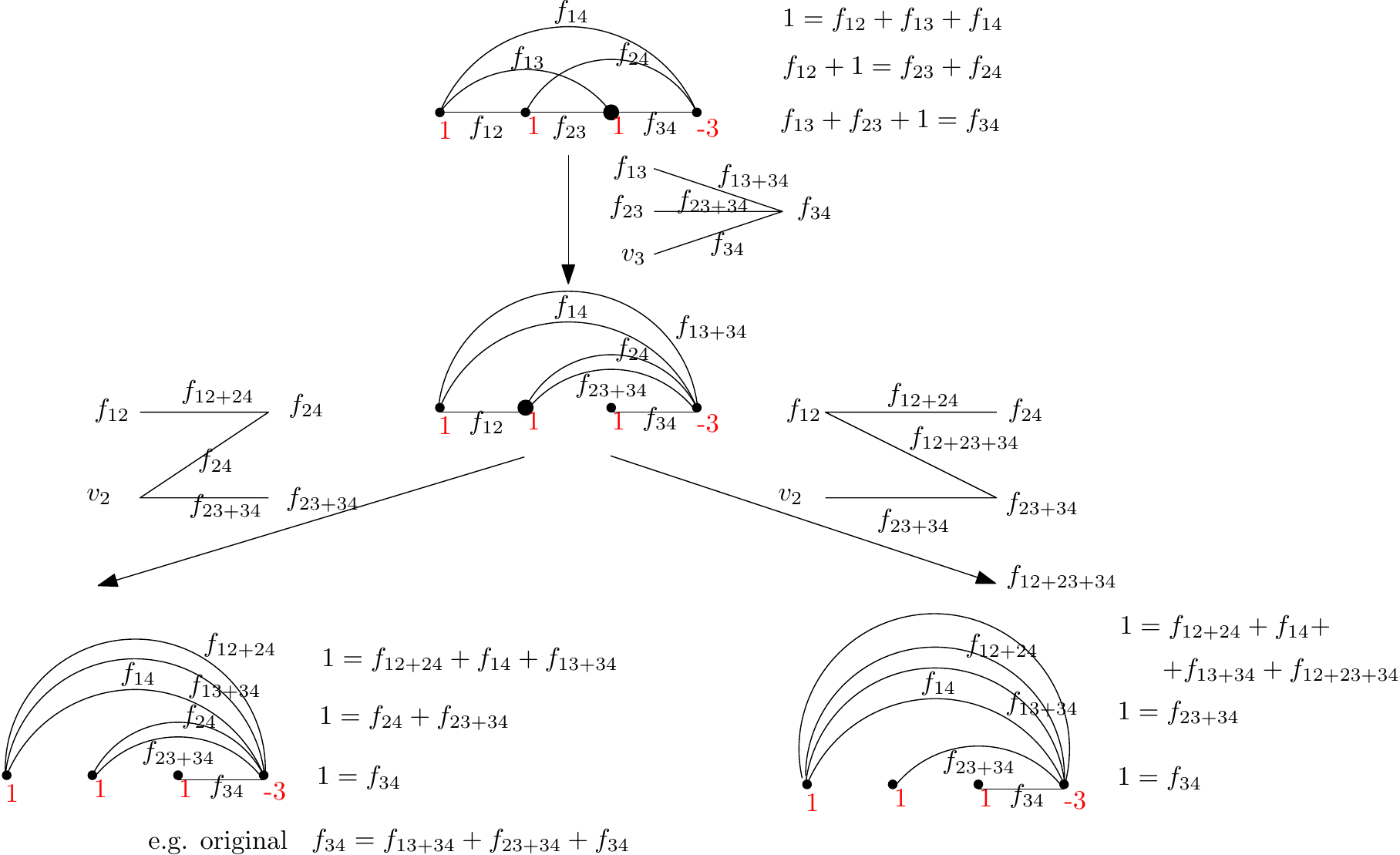}
\caption{Compounded reduction tree with change of variables indicated (see Remark \ref{rem:interpretationFGT}). The vertex
  of the graph
  where the compounded reduction is taking place is enlarged. The flow polytopes corresponding to the leaves of the compounded reduction tree  (CRT) subdivide the flow polytope corresponding to the root of the tree. Compare to the basic reduction tree of the same graph in Figure \ref{fig:brt}.}
\label{fig:complete}
\end{figure}

Consider a graph $G$ on the vertex set $[n+1]$ and an integer netflow
vector $\g=(a_1, \ldots, a_n, -\sum_i a_i)$.  Pick an arbitrary vertex
$i$,$1<i<n+1$, of $G$. There are two cases depending on whether $a_i =0$
or $a_i>0$.

\begin{itemize}
\item {\it Case 1: $a_i=0$.}  Given a  graph $G$ and one of its vertices $i$, let $\mathcal{I}_i=\mathcal{I}_i(G)$ be the multiset of {\bf incoming edges} to $i$, which are defined as edges of the form $(\cdot,i)$. Let $\mathcal{O}_i=\mathcal{O}_i(G)$ be the multiset of {\bf outgoing edges} from $i$, which are defined as  edges of the form   $(i,\cdot)$. Define $\ind_G(i):=\#\mathcal{I}_i(G)$ to be  the {\bf indegree} of vertex $i$ in $G$. 

Assign an ordering to the sets $\mathcal{I}_i$ and $\mathcal{O}_i$ and consider a tree $T \in \mathcal{T}_{\mathcal{I}_i,\mathcal{O}_i}$. For each tree-edge $(e_1,e_2)$ of $T$ where $e_1=(r,i) \in \mathcal{I}_i$ and $e_2=(i,s)\in \mathcal{O}_i$ let $edge(e_1,e_2)=(r,s)$. We think of $edge(e_1,e_2)$ as a formal {\bf sum of the edges} $e_1$ and $e_2$.

The graph $G^{(i)}_T$ is then defined as the graph obtained from $G$
by deleting all edges in $\I_i \cup \O_i$ of $G$  and adding  the
multiset of edges $\{\{edge(e_1,e_2) ~|~ (e_1,e_2)\in E(T)\}\}$, and
edge $(i,n+1)$.

\item  {\it Case 2: $a_i>0$.}  Instead of considering $T \in
\mathcal{T}_{\mathcal{I}_i,\mathcal{O}_i}$ we consider $T \in
\mathcal{T}_{\mathcal{I}_i \cup \{i\},\mathcal{O}_i}$. The edges of
$T$ are as in the previous case, with the exception that $edge(i,
(i,j))=(i,j)$. We define $G^{(i)}_T$  as the graph obtained from $G$
by deleting all edges in $\I_i \cup \O_i$ of $G$  and adding  the
multiset of edges of $T$. 
\end{itemize}

Note that in both cases, the graph $G^{(i)}_T$ has no incoming edges
to vertex $i$. See Figure \ref{fig:complete}.

\begin{remark} \label{rem:interpretationFGT}
We make the following precision when we refer to $\F_{G_T^{(i)}}(\a)$. Each edge of $G^{(i)}_T$ is a sum of (one or more) edges of  the
original graph $G$. As mentioned in Proposition~\ref{prop:charvert}, the vertices of
$\F_{G_T^{(i)}}(\a)$ are given by $\a$-flows on acyclic subgraphs of
${G_T^{(i)}}$. The acyclic subgraphs of ${G_T^{(i)}}$ can be mapped to
acyclic subgraphs of $G$ by mapping each edge $e$ of the
acyclic subgraph of ${G_T^{(i)}}$ to the edges in
$G$ that are formal summands of $e$. Moreover, with the previous map the $\a$-flows on acyclic
subgraphs of ${G_T^{(i)}}$ then map to $\a$-flows on acyclic subgraphs
of $G$. By abuse of notation when we refer to the flows in
$\F_{G_T^{(i)}}(\a)$ we interpret them  in the context of $G$. Thus we
define $\F_{G_T^{(i)}}(\a)$ as the convex hull 
of the $\a$-flows we obtain on $G$ as above. We do this so that
$\F_{G_T^{(i)}}(\a)\subseteq \F_{G}(\a)$. 
\end{remark}

\medskip

The proof of Theorem \ref{thmA} relies on the following lemma.

\begin{lemma}[Compounded subdivision lemma] \label{lem:big_red} Let  $G$ be a graph on the vertex set $[n+1]$.
 Fix an integer netflow vector $\aa=(a_1,\ldots,a_n,-\sum_{i=1}^n a_i)$, $a_i \in \mathbb{Z}_{\geq 0}$ and a vertex $i\in \{2,\ldots,n\}$ with incoming edges. Then, 
\begin{equation}
\F_G(\g)=\bigcup_{T \in \mathcal{T}_{L,R}} \F_{G_T^{(i)}}(\g),
\end{equation}
where 
\begin{equation}\label{tlr}
\mathcal{T}_{L,R}=\begin{cases}
\mathcal{T}_{\mathcal{I}_i,\mathcal{O}_i}  & \text{ if } a_i=0,\\
\mathcal{T}_{\mathcal{I}_i \cup \{i\},\mathcal{O}_i}& \text{ if }
a_i>0.
\end{cases}
\end{equation}
Moreover,  $\{\F_{G_T^{(i)}}(\g)\}_{T \in \mathcal{T}_{L,R}}$ are interior disjoint and of the same dimension as  $\F_{G}(\g)$. 

\end{lemma}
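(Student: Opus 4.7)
My plan is to prove the compounded subdivision lemma by iterating the basic subdivision of Proposition~\ref{red0} at vertex $i$ and matching the resulting pieces bijectively with the bipartite noncrossing trees in $\mathcal{T}_{L,R}$. I will induct on the pair $(\ell, r) = (|\mathcal{I}_i|, |\mathcal{O}_i|)$, ordered by $\ell + r$. The argument is written for case 1 ($a_i = 0$); case 2 is analogous, with the extra left vertex $i$ in $\mathcal{T}_{\mathcal{I}_i \cup \{i\}, \mathcal{O}_i}$ serving as a marker for outgoing edges that are preserved (via $edge(i, e_2) = e_2$) rather than merged with an incoming edge.

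Label $\mathcal{I}_i = \{\alpha_1,\ldots,\alpha_\ell\}$ and $\mathcal{O}_i = \{\beta_1,\ldots,\beta_r\}$ with $\alpha_p = (a_p, i)$ and $\beta_q = (i, b_q)$, and identify the left/right vertices of $T$ with $x_p = \alpha_p$ and $y_q = \beta_q$. For the base case $\ell = 1$ (or symmetrically $r = 1$) the set $\mathcal{T}_{L,R}$ contains a unique noncrossing tree, the star at $x_1$ (resp.\ at $y_1$). Flow conservation at vertex $i$ forces $f(\alpha_1) = \sum_j f(\beta_j)$, and the formal-sum map sending $(g_j) \mapsto \f$ with $f(\beta_j) = g_j$ and $f(\alpha_1) = \sum_j g_j$ realizes an integral equivalence $\F_{G_T^{(i)}}(\a) \equiv \F_G(\a)$; the auxiliary edge $(i,n+1)$ carries flow $a_i = 0$ and contributes nothing.

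For the inductive step, apply Proposition~\ref{red0} to the edge pair $(\alpha_1, \beta_1)$, giving graphs $G_1$ (with $\beta_1$ replaced by the formal-sum edge $(a_1, b_1)$) and $G_2$ (with $\alpha_1$ replaced by $(a_1, b_1)$), so that $\F_G(\a) = \F_{G_1}(\a) \cup \F_{G_2}(\a)$ with interior-disjoint union. At vertex $i$ the graph $G_1$ has $(|\mathcal{I}_i|, |\mathcal{O}_i|) = (\ell, r-1)$ and $G_2$ has $(\ell-1, r)$, so the inductive hypothesis decomposes $\F_{G_1}(\a)$ and $\F_{G_2}(\a)$ into cells indexed by noncrossing bipartite trees in $\mathcal{T}_{L, R \setminus \{y_1\}}$ and $\mathcal{T}_{L \setminus \{x_1\}, R}$ respectively. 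The combinatorial heart is then a matching recursive structure on $\mathcal{T}_{L,R}$: the noncrossing condition forces $(x_1, y_1) \in T$ for every $T$ (if $x_1$ had any edge $(x_1, y_j)$ with $j \geq 2$, no $x_s$ with $s \geq 2$ could connect to $y_1$ without a crossing, so $y_1$'s only possible neighbor is $x_1$). Exactly one of (i) $\deg_T(x_1) = 1$, giving $T \setminus x_1 \in \mathcal{T}_{L \setminus \{x_1\}, R}$ (matching the $G_2$ branch), or (ii) $\deg_T(y_1) = 1$, giving $T \setminus y_1 \in \mathcal{T}_{L, R \setminus \{y_1\}}$ (matching the $G_1$ branch) holds. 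This yields a bijection between $\mathcal{T}_{L,R}$ and the disjoint union of the trees indexing the two sub-decompositions, with the edge $(a_1, b_1)$ introduced by the basic reduction playing in both branches the role of $edge(\alpha_1, \beta_1)$ assigned to the tree edge $(x_1, y_1)$.

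Interior-disjointness follows from Proposition~\ref{red0} applied at each recursion step, and equidimensionality follows from the fact that each basic reduction preserves the number of edges of the graph, so inductively each $\F_{G_T^{(i)}}(\a)$ has the same dimension $m - n$ as $\F_G(\a)$. I expect the main obstacle to be the bookkeeping that shows the graph produced by the iterated basic reductions in a given recursion branch coincides, as a multiset of edges, with the direct definition of $G_T^{(i)}$ in terms of $\{\{edge(e_1, e_2) : (e_1, e_2) \in E(T)\}\}$ together with the auxiliary edge $(i, n+1)$, and in verifying that case 2 works correctly with the additional special left vertex $i$ (in particular when the recursion on $G_2$ reaches a subgraph with no incoming edges at $i$, which must be treated as a trivial base case of the auxiliary tree family $\mathcal{T}_{\{i\}, \cdot}$).
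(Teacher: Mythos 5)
Your proposal is correct and takes a genuinely different route. The paper handles case~1 ($a_i=0$) by citing \cite[Lemma~5.4]{MM} and reduces case~2 ($a_i>0$) to case~1 by adjoining an auxiliary vertex $0$ and edge $(0,i)$ to form $\widehat G$, applying the cited lemma to $\F_{\widehat G}(\hat\a)$, and then identifying the extra left vertex $v_0$ with $v_i$. You instead give a self-contained proof of both cases by induction on $|\mathcal{I}_i|+|\mathcal{O}_i|$ directly from Proposition~\ref{red0}, matching the two branches of a basic reduction on $(\alpha_1,\beta_1)$ to the recursive bijection $\mathcal{T}_{L,R}\cong\mathcal{T}_{L,R\setminus\{y_1\}}\sqcup\mathcal{T}_{L\setminus\{x_1\},R}$ obtained from the (correct) observations that $(x_1,y_1)\in T$ for every $T\in\mathcal{T}_{L,R}$ and that, once $|L|+|R|\ge 3$, exactly one of $\deg_T(x_1)=1$ or $\deg_T(y_1)=1$ holds. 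This buys you a proof that does not rely on an external reference and makes the combinatorics of the tree family visible; what it costs, as you anticipate, is the bookkeeping identity $(G_1)^{(i)}_{T\setminus y_1}=(G_2)^{(i)}_{T\setminus x_1}=G^{(i)}_T$ which must be checked at each step, and a careful treatment of case~2. Two small cautions there: in case~2 your recursion implicitly places $v_i$ as the \emph{last} left vertex (so that the BR is always performed on the genuine edge pair $(\alpha_1,\beta_1)=(x_1,y_1)$), whereas the paper's construction via $\widehat G$ places the auxiliary vertex first; both choices yield valid subdivisions satisfying the lemma (the statement does not fix the ordering), but they are different cell decompositions, so one should not expect them to coincide. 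Also, in the $G_1$ branch when $a_i>0$ you can reach a state with incoming edges at $i$ but no outgoing edges, where $\F_{G_1}(\a)=\varnothing$ since $a_i>0$ forces a negative netflow; this is consistent with the fact that $\mathcal{T}_{\mathcal{I}_i\cup\{v_i\},\varnothing}=\varnothing$, but it is a degenerate leaf of the recursion that deserves an explicit sentence in a full write-up.
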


\begin{proof}
The case $a_i=0$ is proved in \cite[Lemma 5.4]{MM} where in our setup
$G_T^{(i)}$ has an edge $(i,n+1)$ with zero flow since $a_i=0$.  Next, we prove the case
$a_i>0$. 

Let $\widehat{G}$ be the graph obtained from $G$ by adding vertex $0$ and the
edge $(0,i)$ and
\[
\hat{\a}:=(a_i,a_1,\ldots,a_{i-1},0,a_{i+1},\ldots,a_n,-{\textstyle \sum_i
a_i}).
\]
The flow polytopes $\F_G(\a)$ and $\F_{\widehat{G}}(\hat{\a})$ integrally equivalent. This follows since any $\hat{\a}$-flow on $\F_{\widehat{G}}(\hat{\a})$ has flow $a_i$ on the  edge
$(0,i)$. Thus,  restricting any $\hat{\a}$-flow on $\F_{\widehat{G}}(\hat{\a})$  to the edges of
$G$ gives a flow in $\F_G(\a)$.   By applying
the subdivision lemma proved in \cite[Lemma 5.4]{MM} to $\F_{\widehat{G}}(\hat{a})$ on vertex $i$ with
zero flow we obtain
\[
\F_{\widehat{G}}(\hat{\g})=\bigcup_{T \in \mathcal{T}_{\hat{L},R}} \F_{\widehat{G}_T^{(i)}}(\hat{\g}).
\]
where $\hat{L}=\mathcal{I}_i(G) \cup \{v_0\}$, $R=\mathcal{O}_i(G)$ and $\{\F_{\widehat{G}_T^{(i)}}(\hat{\g})\}_{T \in \mathcal{T}_{L\cup
    \{v_0\}, R}}$ are interior disjoint and of the same dimension as
$\F_{\widehat{G}}(\hat{\a})$. Bipartite noncrossing trees $T$ in
$\mathcal{T}_{\mathcal{I}_i \cup \{v_0\}, \mathcal{O}_i}$ are in correspondence with trees $T'$
in $\mathcal{T}_{\mathcal{I}_i \cup \{v_i\},\mathcal{O}_i}$ by relabeling vertex $v_0$ to
$v_i$. Next, by identifying edges $edge((0,i),(i,j))$ (and their
flows) in $\widehat{G}^{(i)}_{T}$ (in
$\F_{\widehat{G}^{(i)}_T}(\hat{\a})$) with edges $edge(v_i,(i,j))$ (and
their flows) in $G^{(i)}_T$ (in $\F_{G_T^{(i)}}(\a)$) we see that the
  $\F_{\widehat{G}^{(i)}_T}(\hat{\a}) \equiv \F_{G_T^{(i)}}(\a)$ and 
\[
\F_G(\g)=\bigcup_{T \in \mathcal{T}_{\mathcal{I}_i \cup \{v_i\},\mathcal{O}_i}} \F_{G_T^{(i)}}(\g),
\]
 and
the polytopes $\F_{G^{(i)}_T}(\a)$ (interpreted as in Remark \ref{rem:interpretationFGT}) are interior disjoint and of the
same dimension as $\F_G(\a)$. 
\end{proof}

We refer to replacing $G$ by $\{G_T^{(i)}\}_{T \in \mathcal{T}_{L,R}}$
as in Lemma \ref{lem:big_red}  as a \textbf{compounded reduction}, or CR for short. We can encode
a series of compounded reductions on a flow polytope $\F_{G}(\a)$ in a rooted
tree called the \textbf{compounded reduction tree}, or CRT for short; see Figure \ref{fig:complete} for an example. The root of this tree is the original graph
$G$. After doing reductions on vertex $i$, the descendant nodes of the root are the graphs
$\F_{G^{(i)}_T}(\a)$ from the lemma. For each new node we repeat this
process to define its descendants. If a node of this tree has a graph
$H$ with no vertices $i=2,\ldots,n$ with  both incoming
and outgoing edges, then the node is a
{\bf leaf} of the reduction tree. Note that the flow polytopes
$\F_H(\a)$ of the graphs $H$ at the leaves of the tree have the same
dimension as $\F_G(\a)$.

\begin{example} 
Figure \ref{fig:complete} gives a CRT for the polytope
$\F_{k_4}(1,1,1,-3)$. The root of the reduction tree is labeled by the
complete graph $k_4$. Then we apply a compounded reduction at vertex $3$ to
obtain the graph $H:=([4], \{(1,2),(1,4),(1,4),(2,4),(2,4),
(3,4)\})$. On $H$ we do a CR at vertex $2$ yielding two
outcomes $H_1$ and $H_2$, drawn on the last row of the figure.  Note
that in both $H_1$ and $H_2$ there are no vertices with both incoming
and outgoing edges. This means we cannot do any more CR on
them. Such graphs are the leaves of this CRT. By Lemma \ref{lem:big_red}  the flow polytopes corresponding to the leaves of a CRT with root $G$ are a dissection of the flow polytope $\F_G(\a)$.  
\end{example}

\subsection{Subdividing  $\F_G(\a)$ into polytopes of known volume} \label{sec:Gms} 
The following lemma describes the   leaves of any compounded reduction tree
rooted at $G$. Given a tuple $\m=(m_1,\ldots,m_n)$ of positive
integers, let $G(\m)$ be the graph with vertices $[n+1]$ and $m_i$
edges $(i,n+1)$.

\bl \label{lem:leaves} Given the flow polytope $\F_G(\a)$ with $G$ a
graph on the vertex set $[n+1]$ and $a_i\geq 0$ for $i \in [n]$, the
leaves of any compounded reduction tree $R_G$ rooted at $G$ are graphs of the
form $G(\m)$ with $m_i=1$ if and only if $a_i=0$ and $\sum_{i=1}^n
m_i=\#E(G).$.
\el 

\begin{proof}
The result follows by iterating the compounded subdivision lemma (Lemma~\ref{lem:big_red}).   The leaves of $R_G$ will consist of graphs with no incoming
edges in vertices $i=2,\ldots,n$ such that their flow polytopes have same dimension as
$\F_G(\a)$. \end{proof}
 
\begin{remark}
We at times refer to the leaves described in Lemma \ref{lem:leaves} as
the \textbf{full dimensional leaves} of the CRT to emphasize that they yield flow
polytopes of the same dimension as the one we started with. This will
be in contrast with some of the leaves we obtain in Section~\ref{sec:kpf} in the basic reduction tree. 
\end{remark}

\begin{example}
The two leaves of the reduction tree in Figure~\ref{fig:complete} are
the graphs $G(3,2,1)$ and $G(4,1,1)$.
\end{example}

Next we calculate the volume of the polytopes $\F_{G(\m)}(\a)$.

\bl \label{lem:Gm} Given  $G(\m)$  on the vertex set $[n+1]$ with
$\m=(m_1, \ldots, m_n)$  a tuple of positive integers,  $\a=(a_1, \ldots, a_n) \in \Z_{\geq 0}^n$,  the normalized
volume of $\F_{G(\m)}(\a)$ is 
\begin{equation} \label{eq:volGm}
\vol(\F_{G(\m)}(\a))={{\#E(G(\m)) - n}\choose {m_1-1, \ldots,
    m_n-1}}a_1^{m_1-1}\cdots a_n^{m_n-1}. 
\end{equation}

\el

\proof 
The flow polytope $\F_{G(\m)}(\a)$ has dimension $\#E(G(\m))-n$ and is the product $\prod_{i=1}^n a_i\Delta_{m_i-1}$ of
dilated $(m_i-1)$-standard simplices $a_i\Delta_{m_i-1}$ each of
which has (standard) volume $a_i^{m_i-1}/(m_i-1)!$ 
\cite[Thm. 2.2]{BR}. Thus the
normalized volume of $\F_{G(\m)}(\a)$ is 
\begin{align*}
 \vol(\F_{G(\m)}(\a)) &= \dim(\F_{G(\m)}(\a))! \cdot \prod_{i=1}^n
 \frac{a_i^{m_i-1}}{(m_i-1)!} \\
&={{\#E(G(\m))-v}\choose {m_1-1, \ldots, m_n-1}}a_1^{m_1-1}\cdots a_v^{m_n-1}.
\end{align*}
\qed

\medskip

In order to calculate the volume $\vol(\F_{G}(\a))$ we need to count
the number of times leaves of the form $G(\m)$ appear in a certain reduction tree
$R^{\leftarrow}_G$ and sum over all their volumes. We tackle this in the next section. 

\section{The canonical subdivision of $\F_G(\a)$ \\ aka proving  the Lidskii volume formula}
\label{sec:thm1}

This section is devoted to proving the Lidskii volume formula \eqref{eq:vol}. We achieve this by constructing a canonical subdivision of  $\F_G(\a)$ via the compounded subdivision lemma.  In the canonical subdivision we know the volume of each of the full dimensional polytopes (Lemma \ref{lem:Gm})  -- referred to as \textbf{cells} of the   subdivision -- and we   count how many of each of the
cells occur in the canonical subdivision.

\subsection{The canonical compounded reduction tree} \label{ccrt}Given $\F_G(\a)$, $\aa=(a_1,\ldots,a_n,-\sum_{i=1}^n a_i)$, $a_i \in \mathbb{Z}_{\geq 0}$, 
let  ${R^{\leftarrow}_G}$ be the compounded  reduction tree obtained by executing the compounded reductions described in the compounded subdivision lemma    on vertices $n, n-1, \ldots, 2$ of $G$  in this order. We refer to   ${R^{\leftarrow}_G}$ as the \textbf{canonical compounded reduction tree} of $G$, or CCRT for short. 
Figure \ref{fig:right_to_left} shows an example of one path from $G$ to a full dimensional leaf in ${R}^{\leftarrow}_G$. 

We refer to the subdivision obtained from the CCRT via  the    compounded subdivision lemma as the \textbf{canonical subdivision} of $\F_G(\a)$. See Figure \ref{fig:subdivBVpoly} for an example. We note that the compounded subdivision lemma implies that the canonical subdivision is a dissection; the results of \cite[Section 6] {jessica} imply that it is also a subdivision.

\begin{figure}
\begin{center}
\includegraphics[scale=.6]{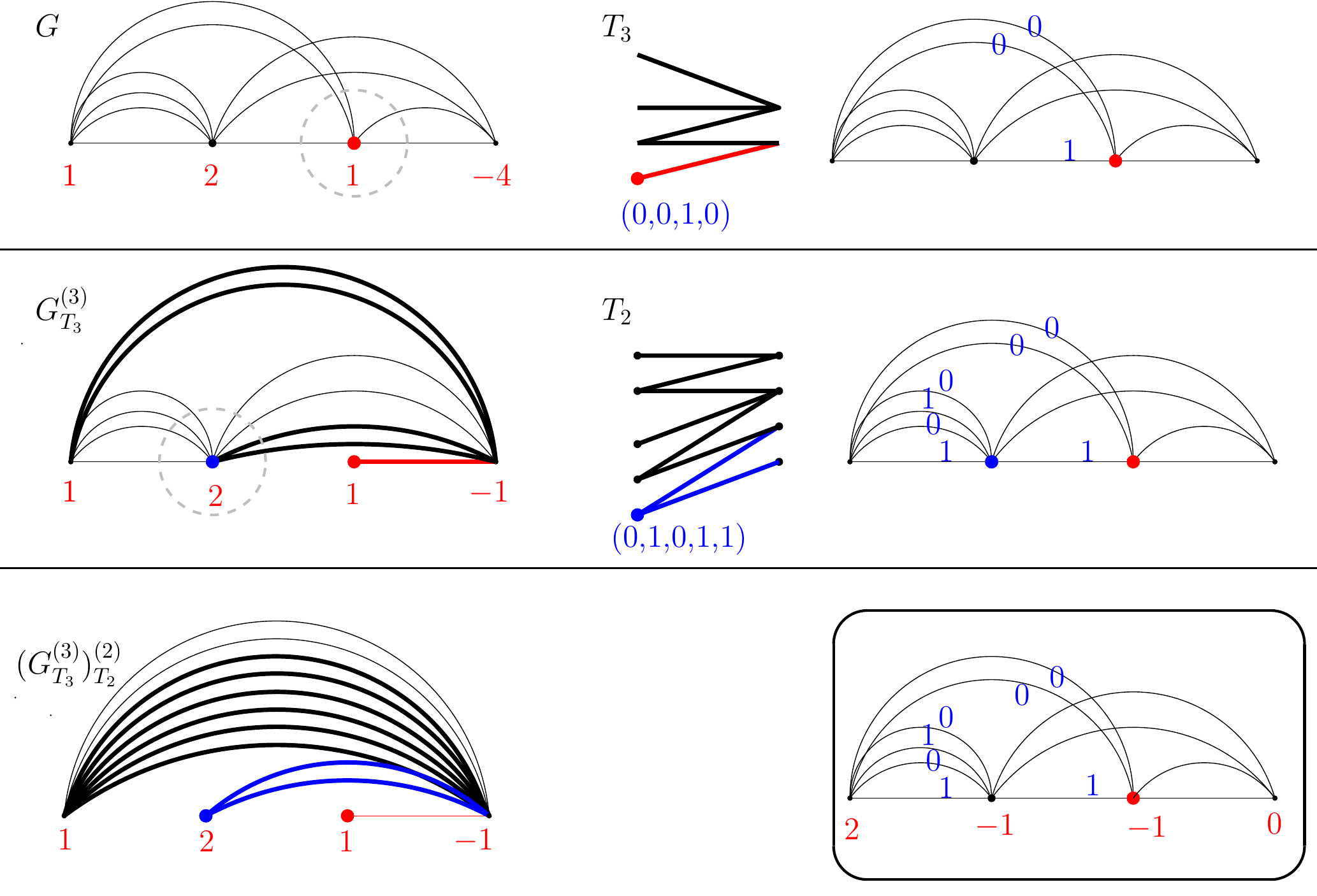}
\caption{Example of a path in the CCRT
  $R^{\leftarrow}_G$. The graph $G$ is the top left graph. The bottom
  left graph, $G(8,2,1)$, is a leaf of the CCRT. The
 compounded  reductions performed in order to arrive to this copy of $G(8,2,1)$
  are encoded by the trees/compositions $T_3$ (at vertex $3$) and
  $T_2$ (at vertex $2$). The correspondence $\Phi$ in the proof of Lemma~\ref{lemma:ng} encodes this path as the integral flow on $G$ at the
  bottom right.}
\label{fig:right_to_left}
\end{center}
\end{figure}

\subsection{Encoding the leaves of the CCRT}  
By Lemma \ref{lem:leaves} only the graphs $G(\m)$ appear as leaves of the CCRT ${R^{\leftarrow}_G}$. 
Let $N^{\leftarrow}_G(\m)$ be the number of times the 
leaf $G(\m)$ appears in  ${R^{\leftarrow}_G}$. The next key lemma shows that this number is given by a value of the Kostant partition function. This result is a generalization of \cite[Thm. 6.1]{MM}.

\begin{lemma} \label{lemma:ng} Let  $G(\m)$ be a full dimensional leaf
  of  the reduction tree ${R^{\leftarrow}_G}$ of $\F_G(\a)$.  Then the number of times the leaf $G(\m)$ appears in $R_G^{\leftarrow}$ is  
  \be N^{\leftarrow}_G(\m)=K_G(m_1-\out_1, m_2-\out_2,\ldots,
  m_n-\out_n, 0),\eee where $\out_i$ is the outdegree of vertex $i$ in $G$. 

\end{lemma}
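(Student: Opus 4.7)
The plan is to construct a bijection $\Phi$ between the paths in $R^{\leftarrow}_G$ from the root $G$ down to the fixed leaf $G(\m)$, and the integer $\a$-flows on $G$ with $\a = (m_1-\out_1,\ldots,m_n-\out_n,0)$. Since the second set has cardinality $K_G(\a)$ by definition of the Kostant partition function, the lemma follows. Each compounded reduction in the CCRT takes place at some vertex $i \in \{2,\ldots,n\}$ and is specified by a bipartite noncrossing tree $T_i$, which by the correspondence recalled in Section~\ref{subsec:subdiv} is encoded by a weak composition indexed by the left vertices. A crucial observation is that the incoming edges of vertex $i$ at the moment we process $i$ are precisely the original incoming edges $\mathcal{I}_i(G)$, since reductions at larger vertices $j>i$ alter only edges incident to $j$. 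I would therefore read off, for each $e \in \mathcal{I}_i(G)$, the corresponding part $c_e$ of the composition encoding $T_i$, and define the flow $f$ on $G$ by $f(e) := c_e$ on such edges; flows on the sink edges $(\cdot, n+1)$ are forced to be $0$.

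The main content of the proof is verifying that $f$ is an $\a$-flow with $a_i = m_i - \out_i$, and that conversely every such flow arises from a unique CCRT path. For the forward direction, I would track how the outdegree $\tilde{\out}_i$ of vertex $i$ in the graph at the moment we process $i$ evolves: a straightforward induction on the reductions already performed at $j = n, n-1, \ldots, i+1$ gives
\[
\tilde{\out}_i = \out_i + \sum_{\substack{(i,j)\in E(G) \\ j > i}} f((i,j)),
\]
because each reduction at vertex $j > i$ replaces every original edge $(i,j) \in E(G)$ by exactly $f((i,j))+1$ new outgoing edges from $i$ (via the formal sums in Remark~\ref{rem:interpretationFGT}). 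The composition encoding $T_i$ satisfies the sum identity $\sum_{e \in \mathcal{I}_i(G)} c_e + [a_i>0]\, c_{v_i} = \tilde{\out}_i - 1$. Substituting $c_e = f(e)$ and, in case $a_i > 0$, the forced choice $c_{v_i} = m_i - 1$ (needed so that the leaf has the prescribed multiplicity $m_i$), one obtains exactly the local netflow equation at vertex $i$, namely $\sum_{(i,j)\in E(G)} f((i,j)) - \sum_{(j,i)\in E(G)} f((j,i)) = m_i - \out_i$.

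The case $a_i = 0$ is handled by the same computation with $m_i = 1$, or uniformly with the $a_i > 0$ case via the auxiliary vertex construction $\widehat{G}$ from the proof of Lemma~\ref{lem:big_red}. The reverse map is then immediate: given an $\a$-flow $f$ on $G$, processing vertices in the order $n, n-1, \ldots, 2$, at each step $i$ the values $f(e)$ for $e \in \mathcal{I}_i(G)$ together with $c_{v_i} = m_i - 1$ determine the composition uniquely, hence the tree $T_i$, and hence the CCRT path. The main obstacle is the careful bookkeeping that identifies the evolving outdegree $\tilde{\out}_i$ in the intermediate graph with $\out_i + \sum_{j > i} f((i,j))$; once this identity is established, the composition constraint at step $i$ becomes precisely the netflow equation in $K_G(\a)$, and the netflow at the sink $n+1$ then follows automatically by summing over all vertices.
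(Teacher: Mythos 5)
Your proposal is correct and follows essentially the same strategy as the paper: build a bijection $\Phi$ between CCRT paths to $G(\m)$ (encoded by tuples of noncrossing trees, equivalently compositions at each vertex) and integral flows on $G$ with netflow $(m_1-\out_1,\ldots,m_n-\out_n,0)$, by reading off $f(e)=c_e$ on each incoming edge, exploiting the invariance of $\mathcal{I}_i(G)$ under reductions at larger vertices (the paper's Proposition \ref{prop:extra-edges}(i)) and the outdegree-evolution identity $\tilde{\out}_i=\out_i+\sum_{e\in\mathcal{O}_i(G)}f(e)$ (the paper's equation \eqref{eq:post_netflow}). One small imprecision: your justification that $\mathcal{I}_i(G)$ is preserved, ``reductions at larger vertices $j>i$ alter only edges incident to $j$,'' is not quite right, since a reduction at $j$ also \emph{adds} edges $(r,s)$ neither of whose endpoints is $j$; the correct reason is that every added edge $(r,s)$ has $s>j>i$, so it cannot lie in $\mathcal{I}_i$. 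With that stated precisely, your argument coincides with the paper's.
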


The proof of this lemma will use the following result about the edges of the graphs $G_T^{(i)}$ appearing in $R_G^{\leftarrow}$.

\begin{proposition} \label{prop:extra-edges}
Given graphs $G$ and $G^{(i)}_T$ as above with $a_i\geq0$ and $k<i$ we have that 
\begin{compactitem}
\item[(i)] the incoming edges $\mathcal{I}_k(G^{(i)}_T)$ and
  $\mathcal{I}_k(G)$ are equal, 
\item[(ii)] if $T$ is given by the composition $(b_e,m_i-1)_{e\in \mathcal{I}_i(G)}$, then $G^{(i)}_T$ has $b_e+1$ edges $edge(\cdot,e)$ one of which corresponds to the original edge $e$ in $G$ and $b_e$ extra edges.
\end{compactitem}
\end{proposition}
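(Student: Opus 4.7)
The plan is to prove both parts by directly unpacking the construction of $G^{(i)}_T$ given just before the statement of the proposition, tracking how edges are affected locally. No deep ingredient is needed; the assertions are essentially definitional.

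For part (i), I would enumerate the modifications carrying $G$ to $G^{(i)}_T$: (a) delete every edge in $\mathcal{I}_i(G)\cup\mathcal{O}_i(G)$; (b) insert the multiset $\{edge(e_1,e_2):(e_1,e_2)\in E(T)\}$, whose members all have the form $(r,s)$ with $r<i$ and $s>i$; and finally either (c$_0$) in the case $a_i=0$, append the edge $(i,n+1)$, or (c$_+$) in the case $a_i>0$, add the edges $edge(i,e_2)=(i,s)$ coming from tree-edges incident to the extra left vertex $i$. Since $k<i$, every deleted edge in (a) is of the form $(\cdot,i)$ or $(i,\cdot)$ and hence not of the form $(\cdot,k)$; every added edge in (b) has head $s>i>k$; and every edge added in (c$_0$) or (c$_+$) has tail $i\neq k$ and head $>k$. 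Thus no edge of the form $(\cdot,k)$ is ever removed or added, yielding $\mathcal{I}_k(G_T^{(i)})=\mathcal{I}_k(G)$.

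For part (ii), the plan is to appeal to the composition encoding of bipartite noncrossing trees recalled in the sentence ``a tree $T$ in $\mathcal{T}_{L,R}$ corresponds to the composition $(b_1,\ldots,b_\ell)$ of $r-1$\ldots'' just above. Under that encoding, a tree $T\in\mathcal{T}_{L,R}$ with composition entry $b_v$ at the left vertex $v$ has exactly $b_v+1$ tree-edges incident to $v$. Specializing to the left vertex labeled by $e\in\mathcal{I}_i(G)$ and recalling that every tree-edge of $T$ of the form $(e,f)$ produces one graph-edge $edge(e,f)$ in $G_T^{(i)}$, we obtain exactly $b_e+1$ edges of $G_T^{(i)}$ of the form $edge(\cdot,e)$, i.e.\ having $e$ as a formal summand. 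The phrasing ``one original plus $b_e$ extras'' is nothing more than the normalization $b_e=(\text{number of tree-edges at }e)-1$ built into the encoding: the single edge $e$ of $G$ is absorbed once into $G_T^{(i)}$, while the remaining $b_e$ incidences at $e$ produce additional new edges introduced by the compounded reduction.

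The only mild care needed is to keep the two cases $a_i=0$ and $a_i>0$ separate when listing the added edges, and to observe that the extra left vertex $i$ present when $a_i>0$ contributes only edges of the form $(i,s)$, which are irrelevant to both the count of edges incident to $e\in\mathcal{I}_i(G)$ in (ii) and the incoming edges at $k<i$ in (i). Since everything reduces to the defining recipe for $G_T^{(i)}$ and the well-known bijection $\mathcal{T}_{L,R}\leftrightarrow\{\text{weak compositions}\}$, I anticipate no genuine obstacle in the write-up.
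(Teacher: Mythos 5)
Your proposal is correct and takes exactly the approach the paper intends: the paper's proof is the one-line remark ``This follows from the construction of $G_T^{(i)}$,'' and your write-up simply makes that definitional unwinding explicit (deleting $\mathcal{I}_i\cup\mathcal{O}_i$, adding edges with tail $<i$ and head $>i$, and reading $b_e+1$ off the degree of the left vertex $e$ in the composition encoding of $T$). No gap; the only cosmetic note is that the paper's notation $edge(\cdot,e)$ in the proposition statement should be read, as you do, as ``edges of $G_T^{(i)}$ having $e$ as a formal summand.''
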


\begin{proof}
This follows from the construction of $G_T^{(i)}$. 
\end{proof}

\begin{example}
In Figure~\ref{fig:right_to_left} the graph $G_{T_3}^{(3)}$ has
the same incoming edges to vertex $1$ as graph $G$. The tree $T_3$ is
given by the composition $(0,0,1,0)$. Since in this composition
$b_{(2,3)}=1$ then  $G_{T_3}^{(3)}$ has two edges of the form
$edge(\cdot,(2,3))$, which are the two copies of $(2,4)$. 
\end{example}

\begin{proof}[Proof of Lemma~\ref{lemma:ng}] In ${R^{\leftarrow}_G}$
  consider a path from $G$ to a leaf. It is obtained by picking
  particular trees $T_n, \ldots, T_2$ at the vertices $n, n-1, \ldots,
  2$ during a  compounded  reduction.  We denote the resulting graphs
  by $G_n, G_{n-1},\ldots, G_2$, respectively. That is, $G_i =
  (G_{i+1})^{(i)}_{T_i}$ where $T_i$ is the noncrossing tree encoding
  the subdivision on vertex $i$ and $G_{n+1}:=G$. 

The number $N^{\leftarrow}_G(\m)$ equals the number of tuples of
noncrossing trees ${\bf T} :=(T_2,\ldots,T_n)$ where the tree $T_i$ is
such that $G_i = (G_{i+1}^{(i)})_{T_i}$ and $\deg_{T_i}(i) = m_i$. We
give a correspondence between tuples ${\bf T}$ and integral flows
$\mathbf{f}_G$ on $G$ with netflow 
\[
{\bf a}({\bf m}):=(m_1-\out_1,\ldots,m_n-\out_n,0),
\]
where $m_1 = \#E(G)-\sum_{i=2}^n m_i$.

For $i=n,n-1,\ldots,2$, 
by Proposition~\ref{prop:extra-edges}(i) we
have that $\mathcal{I}_i(G_{i+1})=\mathcal{I}_i(G)$, thus we can encode the
tree $T_i$ as the composition of $\#\mathcal{O}_i(G_{i+1})-1$ of the
form $(b_e,m_i-1)_{e\in \mathcal{I}_i(G)}$. With this setup set $f(e)
= b_e$, and set zero flow $f((\cdot,n+1))=0$ on the incoming edges to
vertex $n+1$. This defines an integral flow ${\bf f}_G$ on $G$. Finally, set $\Phi({\bf T}) = {\bf f}_G$. For an example of $\Phi$, see Figure \ref{fig:right_to_left}.

Next, we calculate the netflow of the integral flow ${\bf f}_G$. For each $i=2,\ldots,n$, by construction of ${\bf f}_G$ we have that  
\begin{equation} \label{eq:pre_netflow}
\sum_{e \in \mathcal{I}_i(G)} f(e)  =  \out_i(G_{i+1}) - m_i. 
\end{equation}

By Proposition~\ref{prop:extra-edges}(ii), the outgoing edges of
vertex $i$ in $G_{i+1}$ correspond to the original outgoing edges in
$\mathcal{O}_i(G)$ and extra $b_{(i,j)}$ edges coming from the
composition corresponding to the tree $T_j$ and edge $(i,j)$ in
$\mathcal{I}_j(G_{j+1})=\mathcal{I}_j(G)$. Since this edge $(i,j)$ of
$G_{j+1}$ is also an edge in $\mathcal{O}_i(G)$ then we have that 
\begin{equation} \label{eq:post_netflow}
\out_i(G_{i+1}) = \out_i(G) + \sum_{e \in \mathcal{O}_i(G)} f(e).
\end{equation}
Combining \eqref{eq:pre_netflow} and \eqref{eq:post_netflow} we obtain that the netflow of vertex $i$ in ${\bf f}_G$ is $m_i-\out_i(G)$. Next we calculate  the netflow on vertex $1$. Since $G_2 = G(\m)$ then $\out_1(G_2) = m_1$. Also, by the previous argument \eqref{eq:post_netflow} holds for $i=1$, thus 
\[
\sum_{e \in \mathcal{O}_1(G_2)} f(e) = m_1-\out_1(G),
\]
as desired.

Next we show that $\Phi$ is a bijection by building its inverse. Given a flow ${\bf f}_G$ with netflow ${\bf a}({\bf m})$, we read off the flows on the edges $\mathcal{I}_i(G)$ for $i=2,\ldots,n$ to obtain compositions of $\out_i(G)-1+\sum_{e \in \mathcal{O}_i(G)} b(e)$ of the form $(b_e,m_i-1)_{e\in \mathcal{I}_i(G)}$ if $m_i>0$ or of the form $(b_e)_{e\in \mathcal{I}_i(G)}$ if $m_i=0$. We encode these compositions as bipartite noncrossing trees $T_2,\ldots,T_n$. By construction and \eqref{eq:post_netflow}, the number of outgoing vertices of $T_i$ is $\out_i(G_i)$. We set $\Psi({\bf f}_G) = (T_2,\ldots,T_n)$. By construction one can show that $\Psi=\Phi^{-1}$, thus $\Phi$ is a bijection. This shows that $N^{\leftarrow}_G(\m)$ equals the number of integral flows on $G$ with netflow ${\bf a}({\bf m})$.
\end{proof}

\subsection{Which $G(\m)$ appear as  leaves in the CCRT} \label{subsec:chardominance}

The next result characterizes the vectors $\m$ encoding the full dimensional leaves of the reduction tree $R_G^{\leftarrow}$ of the flow polytope $\F_G(\a)$.

\begin{theorem} \label{thm:leaves-dominance-order}
Given a flow polytope $\F_G(\a)$, $\aa=(a_1,\ldots,a_n,-\sum_{i=1}^n
a_i)$, $a_i \in \mathbb{Z}_{\geq 0}$, the graph $G(\m)$ is a full
dimensional leaf of the CCRT $R^{\leftarrow}_G$   if and only if
$\m=(m_1,\ldots,m_n)$ is a composition of $\#E(G)$   and
$(m_1,\ldots,m_n) \geq (\out_1,\ldots,\out_n)$ in dominance order.  
\end{theorem}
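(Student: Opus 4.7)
My plan is to reduce the theorem, via Lemmas~\ref{lem:leaves} and \ref{lemma:ng}, to a characterization of positivity of a specific Kostant partition function, and then prove that characterization.

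By Lemma~\ref{lem:leaves} every full-dimensional leaf of $R_G^{\leftarrow}$ has the form $G(\m)$ with $\sum_i m_i = \#E(G)$, so the ``composition of $\#E(G)$'' condition is automatic. By Lemma~\ref{lemma:ng}, the multiplicity of $G(\m)$ as a leaf of $R_G^{\leftarrow}$ is $K_G(m_1-\out_1,\ldots,m_n-\out_n,0)$, so $G(\m)$ actually appears as a leaf if and only if this Kostant value is strictly positive. The theorem thus reduces to showing: for a composition $\m$ of $\#E(G)=\sum_i\out_i$,
\[
K_G(m_1-\out_1,\ldots,m_n-\out_n,0)>0 \;\iff\; \m\geq\out \text{ in dominance order.}
\]

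For the forward direction I would use a standard cut-counting argument. Given a nonnegative integer flow $(f(e))_{e\in E(G)}$ with $\sum_e f(e)\alpha(e)=(m_1-\out_1,\ldots,m_n-\out_n,0)$, summing the first $k$ coordinates for each $k=1,\ldots,n-1$ yields
\[
\sum_{\substack{(i,j)\in E(G)\\ i\leq k<j}} f(i,j) \;=\; \sum_{i=1}^{k}(m_i-\out_i),
\]
since $\alpha((i,j))=e_i-e_j$ contributes $+1$ to the partial sum exactly when $i\leq k<j$ (and $0$ otherwise). Nonnegativity of the left-hand side is precisely the dominance condition.

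For the reverse direction I would appeal to the classical feasibility criterion for a nonnegative flow on a DAG with prescribed net outflows and no edge-capacity bounds: such a flow exists if and only if $\sum_{i\in U} b_i\leq 0$ for every ``closed'' set $U\subseteq[n+1]$ with no outgoing edges. Because $G$ is acyclic with edges going from smaller to larger indices and every $i\leq n$ has at least one outgoing edge, the relevant closed $U$ reduce to the upper intervals $\{k,k+1,\ldots,n+1\}$, and the cut condition $\sum_{i\geq k} b_i\leq 0$ is equivalent, via $\sum_i b_i=0$, to $\sum_{i<k}b_i\geq 0$, which is dominance. Total unimodularity of the flow-constraint matrix then promotes the real flow to a nonnegative integer flow, yielding $K_G(\m-\out,0)>0$.

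The main obstacle is the sufficiency direction: the cut-feasibility criterion is classical but merits a brief self-contained proof, which I would supply via a max-flow/min-cut argument on $G$ with supplies $b_i=m_i-\out_i$. A more combinatorial alternative, more in keeping with the spirit of the paper, is to build the flow iteratively: route one unit of flow along a directed path in $G$ from a source ($b_i>0$) to a sink ($b_j<0$), check that the dominance condition is preserved on the residual vector, and induct on $\sum_i b_i^{+}$ until the zero vector is reached. The delicate point in either approach is confirming that the existence of a suitable path (respectively, the non-vacuousness of the cut condition) follows from dominance and the assumption $\out_i\geq 1$ for $i\leq n$.
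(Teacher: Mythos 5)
Your proposal takes a genuinely different route from the paper. The paper proves Theorem~\ref{thm:leaves-dominance-order} by two inductions on the number of vertices of $G$ with incoming edges (Lemmas~\ref{lem:dominanceorder} and~\ref{lem:dominanceorder-converse}), directly tracking how outdegrees evolve under compounded reductions and, for the converse, explicitly constructing a path in the CCRT via a chosen noncrossing tree at each step. You instead reduce the whole question, via Lemma~\ref{lemma:ng}, to the numerical equivalence $K_G(\m-\out,0)>0 \iff \m\geq\out$ in dominance order, and attack that by flow-feasibility. The reduction step is legitimate: the bijection $\Phi$ in the proof of Lemma~\ref{lemma:ng} converts paths in the CCRT to integral flows and back without using the hypothesis that at least one such path exists, so $G(\m)$ appears as a leaf exactly when $K_G(\m-\out,0)>0$. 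Your cut-counting argument for the forward implication, summing the balance equations over $\{1,\ldots,k\}$, is correct and arguably cleaner than Lemma~\ref{lem:dominanceorder}.

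The reverse direction, however, has a genuine gap, and it sits exactly where you flagged the ``delicate point.'' The assertion that the closed sets $U$ with no outgoing edges reduce to the upper intervals $\{k,\ldots,n+1\}$ is false under the hypothesis $\out_i\geq 1$ alone; that hypothesis guarantees each $i\leq n$ in $U$ drags \emph{some} larger vertex into $U$, not the vertex $i+1$. Take $G$ on $[4]$ with edges $\{(1,3),(1,4),(2,3),(2,4),(3,4)\}$, so $\out=(2,2,1,0)$. The set $U=\{1,3,4\}$ is closed (all of $(1,3),(1,4),(3,4)$ stay inside) yet is not an upper interval, and its cut inequality $b_1+b_3+b_4\leq 0$ is not implied by the interval inequalities. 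Concretely, $\m=(3,1,1)$ satisfies $\m\geq(2,2,1)$ in dominance order, while vertex $2$ has no incoming edge of $G$, so $K_G(1,-1,0,0)=0$ and $G(3,1,1)$ is not a leaf. So either your cut family has to be the full family of closed sets rather than just intervals (in which case the equivalence with dominance fails, as this example shows), or an additional hypothesis must be imposed. Assuming $G$ contains the Hamiltonian path $1\to2\to\cdots\to n+1$ would make every closed set an upper interval and salvage your argument; this hypothesis is satisfied by every worked example in the paper, and in fact the paper's own converse (Lemma~\ref{lem:dominanceorder-converse}) tacitly relies on a similar assumption when it routes the $\out_n-m_n$ extra edges to vertex $n-1$ and writes $\out'_{n-1}=\out_{n-1}+\out_n-m_n$, which presupposes $(n-1,n)\in E(G)$. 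You should either add such a hypothesis or switch to the paper's explicit inductive construction of a path in the reduction tree.
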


This result is proved via two lemmas.

\begin{lemma} \label{lem:dominanceorder}
Let  $G(\m)$ be a full dimensional leaf
  of  the CCRT  ${R^{\leftarrow}_G}$. Then
  $(m_1,\ldots,m_n) \geq (\out_1,\ldots,\out_n)$ in dominance order.
\end{lemma}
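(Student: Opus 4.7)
The plan is to track, for each fixed $k \in \{1,\ldots,n\}$, the partial sum
\[
S_k(G') := \sum_{j=1}^{k} \out_{G'}(j),
\]
equivalently the number of edges of $G'$ with source in $\{1,\ldots,k\}$, along every root-to-leaf path in the CCRT $R^{\leftarrow}_G$. Since the leaf graph $G(\m)$ has $\out_{G(\m)}(j)=m_j$ for $j\leq n$, the dominance inequality $(m_1,\ldots,m_n)\geq (\out_1,\ldots,\out_n)$ is exactly the system $S_k(G(\m))\geq S_k(G)$ for $k=1,\ldots,n$, with equality at $k=n$ coming from $\sum_j m_j = \#E(G)$ in Lemma~\ref{lem:leaves}. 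So it suffices to show that a single compounded reduction never decreases $S_k$.

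The next step is to analyze the effect on $S_k$ of a CR at a vertex $i$, splitting into the cases $i\leq k$ and $i>k$. Recall that in both cases the CR removes all edges incident to $i$ and adds one new edge per tree edge of the chosen $T\in \mathcal{T}_{L,R}$, together with the edge $(i,n+1)$ if $a_i=0$; the source of each newly added edge equals the label of the corresponding left vertex of $T$, which belongs to $\{j : (j,i)\in \mathcal{I}_i\}\cup(\{i\}\text{ if }a_i>0)$. When $i\leq k$, every removed and every added edge has source $\leq k$, so a routine count (using that a CR preserves the total number of edges) gives $S_k(G^{(i)}_T)=S_k(G)$. When $i>k$, the only removed edges contributing to $S_k$ are the incoming edges $(j,i)$ with $j\leq k$, and the only added edges contributing to $S_k$ are those coming from tree edges adjacent to a left vertex labeled $(j,i)$ with $j\leq k$. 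Bookkeeping then yields
\[
S_k(G^{(i)}_T)-S_k(G) \;=\; \sum_{\substack{(j,i)\in\mathcal{I}_i \\ j\leq k}} \bigl(\deg_T((j,i))-1\bigr),
\]
the sum being over incoming edges counted with multiplicity.

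The main (and essentially only) obstacle is to verify this difference is nonnegative, and this is immediate from the fact that every vertex of a tree has degree at least $1$: each summand is $\geq 0$, so $S_k(G^{(i)}_T)\geq S_k(G)$.

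To conclude, since the CCRT performs CRs only at vertices $n,n-1,\ldots,2$, the two cases above cover every step along any root-to-leaf path, and $S_k$ is nondecreasing from $G$ to $G(\m)$. Hence $m_1+\cdots+m_k = S_k(G(\m)) \geq S_k(G) = \out_1+\cdots+\out_k$ for every $k$, which is precisely the dominance inequality.
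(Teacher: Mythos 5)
Your proof is correct, and it takes a cleaner and somewhat more general route than the paper's. The paper argues by induction on the number of vertices with incoming edges: it first shows $m_n\leq \out_n$ directly, then derives a comparison between the outdegree vectors of $G$ and $G^{(n)}_T$ and patches this into the inductive hypothesis, after first reversing the dominance condition to $(m_n,\ldots,m_1)\leq(\out_n,\ldots,\out_1)$. Your argument instead isolates a genuine monovariant: for each fixed $k$, the partial sum $S_k(G')=\sum_{j\leq k}\out_{G'}(j)$ --- the count of edges with source in $\{1,\ldots,k\}$ --- and shows that a single compounded reduction never decreases it, using only that a CR preserves the edge count and that every vertex of the bipartite noncrossing tree $T$ has degree at least one. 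This makes the source of the inequality ($\deg_T\geq 1$) fully transparent, avoids reversing the dominance order, and, as a bonus, applies verbatim to \emph{any} compounded reduction tree for $G$, not only the canonical one $R^{\leftarrow}_G$ (since Lemma~\ref{lem:leaves} already guarantees that all full-dimensional leaves have the form $G(\m)$ with $\sum m_i = \#E(G)$ regardless of the order of reductions). The paper's inductive phrasing, by contrast, aligns more directly with the recursive structure of the CCRT used elsewhere in Section~4. Both are valid; yours is the more modular of the two.
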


\begin{lemma} \label{lem:dominanceorder-converse}
If  $\m=(m_1,\ldots,m_n)$ is a  composition of $\#E(G)$ with
$(m_1,\ldots,m_n) \geq (\out_1,\ldots,\out_n)$ in dominance order,
then  the CCRT $R_G^{\leftarrow}$ has full dimensional leaves
$G(\m)$. 
\end{lemma}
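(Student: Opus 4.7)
The plan is to invoke Lemma~\ref{lemma:ng}, which asserts that $N^{\leftarrow}_G(\m)=K_G(m_1-\out_1,\ldots,m_n-\out_n,0)$, and thereby reduce the claim to showing that this Kostant partition function value is strictly positive whenever $\m$ is a composition of $\#E(G)$ with $(m_1,\ldots,m_n)\geq(\out_1,\ldots,\out_n)$ in dominance order. Equivalently, I must produce at least one root-to-leaf path in $R_G^{\leftarrow}$ terminating at $G(\m)$, or equivalently exhibit a nonnegative integer flow on $G$ realizing the netflow $(m_1-\out_1,\ldots,m_n-\out_n,0)$.

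I would proceed by induction on $n$, with the base case $n=1$ trivial (the unique leaf being $G(\#E(G))$). For the inductive step, the goal is to construct an appropriate bipartite noncrossing tree $T_n$ at vertex $n$ so that the compounded reduction yields a graph $G^{(n)}_{T_n}$ for which the truncated target $(m_1,\ldots,m_{n-1})$ continues to dominate the new outdegrees. The dominance hypothesis evaluated at $k=n-1$ immediately yields $m_n\leq\out_n$, so the external source vertex $v_n$ can be assigned degree exactly $m_n$ in $T_n$; the remaining $\ind_n+\out_n-m_n$ tree-edges must then be distributed as degrees $d_{i,n}\geq 1$ among the $\ind_n$ incoming edges $(i,n)$. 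Since $\out_i(G^{(n)}_{T_n})=\out_i(G)-[(i,n)\in E(G)]+d_{i,n}$ for $i<n$, the dominance inequality required from the inductive hypothesis on $G^{(n)}_{T_n}$ translates into the partial-sum constraints
\[
\sum_{i\leq k} d_{i,n}\ \leq\ I_{n,k}+\Delta_k \qquad \text{for all } k<n,
\]
where $I_{n,k}:=\#\{i\leq k : (i,n)\in E(G)\}$ and $\Delta_k:=\sum_{i\leq k}(m_i-\out_i)\geq 0$ is the dominance surplus.

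The heart of the argument is to produce a feasible degree sequence $(d_{i,n})$ satisfying these upper bounds together with $\sum_i d_{i,n}=\ind_n+\out_n-m_n$ and $d_{i,n}\geq 1$. Setting $e_{i,n}:=d_{i,n}-1\geq 0$, the problem becomes that of choosing a nonnegative integer sequence supported on the incoming edges to $n$, summing to $\out_n-m_n=\Delta_{n-1}$, whose partial sums $\sum_{i\leq k}e_{i,n}$ are bounded above by $\Delta_k$; such a sequence can be built greedily by concentrating the excess as far to the right as the surplus slack permits. Once $T_n$ is chosen, the inductive hypothesis applied to $G^{(n)}_{T_n}$ with target $(m_1,\ldots,m_{n-1})$ produces a path in $R_{G^{(n)}_{T_n}}^{\leftarrow}$ terminating at $G(\m)$; prepending the reduction at $n$ completes the desired path in $R_G^{\leftarrow}$. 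The main obstacle will be verifying the simultaneous feasibility of all partial-sum inequalities for the $d_{i,n}$, which amounts to a Hall-type compatibility condition between the pattern of incoming edges $(i,n)$ in $G$ and the dominance surpluses $\Delta_k$; the bookkeeping identity $\Delta_{n-1}=\out_n-m_n$ ensures the extremal constraint holds with equality, and the monotonicity structure of a minimal witness for the slacks $\Delta_k$ drives the greedy construction to completion.
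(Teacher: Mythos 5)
Your strategy mirrors the paper's proof: induct with the first compounded reduction at vertex $n$, choose a bipartite noncrossing tree $T_n$ so that the truncated target $(m_1,\ldots,m_{n-1})$ still dominates the new outdegrees $(\out'_1,\ldots,\out'_{n-1})$, and invoke the inductive hypothesis on $G^{(n)}_{T_n}$. Your translation of this requirement into the system $\sum_{i\leq k}d_{i,n}\leq I_{n,k}+\Delta_k$ (with $d_{i,n}\geq 1$ on incoming edges and $\sum d_{i,n}=\ind_n+\out_n-m_n$) is correct, and is in fact a cleaner and more explicit reformulation than the paper records: the paper simply fixes the composition $(0^{\ind_n-1},\out_n-m_n,m_n-1)$ — which is your greedy choice of depositing all the surplus $\out_n-m_n$ on the last incoming edge — and then asserts $\out'_{n-1}=\out_{n-1}+\out_n-m_n$.

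The unresolved step is the existence of a feasible $(d_{i,n})$, and this is a genuine gap rather than routine bookkeeping. Let $j^{*}:=\max\{j<n : (j,n)\in E(G)\}$. If $j^{*}<n-1$, then for every $k$ with $j^{*}\leq k\leq n-2$ all the mass $\sum_{i\leq k}d_{i,n}$ is forced and the constraint collapses to $\Delta_{n-1}=\out_n-m_n\leq\Delta_k$, i.e.\ $m_{k+1}+\cdots+m_{n-1}\leq\out_{k+1}+\cdots+\out_{n-1}$, which does \emph{not} follow from the dominance hypothesis. Concretely, take $G$ on $[4]$ with edge multiset $\{(1,3),(2,4),(3,4),(3,4),(3,4)\}$, so $\out=(1,1,3)$ and $\ind_3=1$, and take $\m=(1,3,1)$: then $\m\geq\out$ in dominance, yet $K_G(m_1-\out_1,m_2-\out_2,m_3-\out_3,0)=K_G(0,2,-2,0)=0$ since no admissible flow on $G$ can produce net outflow $2$ at vertex $2$, so $G(\m)$ is not a leaf of any reduction tree for $G$. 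The paper's own argument conceals the same difficulty: equation \eqref{eq:converse-outv} is only valid when the last incoming edge to $n$ is $(n-1,n)$, which tacitly presumes $(n-1,n)\in E(G)$. Your closing remark identifying a ``Hall-type compatibility condition'' between the incidence pattern of the incoming edges and the surpluses $\Delta_k$ correctly locates the obstruction, but that condition is not implied by dominance alone, and cannot be made to vanish without some further hypothesis on $G$ (for instance that $G$ contain the path $1\to 2\to\cdots\to n+1$, as all the paper's worked examples do).
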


\begin{proof}[Proof of Theorem~\ref{thm:leaves-dominance-order}]
The characterization follows by Lemmas~\ref{lem:dominanceorder} and \ref{lem:dominanceorder-converse}.
\end{proof}

The rest of this subsection is devoted to the proofs of the two lemmas.

\begin{proof}[Proof of Lemma~\ref{lem:dominanceorder}]
By Lemma~\ref{lem:leaves} we know that $m_1+\cdots + m_n =
\out_1+\cdots+ \out_n$. Since these sums are equal, showing  $(m_1,\ldots,m_n) \geq
(\out_1,\ldots,\out_n)$ is equivalent to showing  $(m_n,\ldots,m_1) \leq
(\out_n,\ldots,\out_1)$. We show the latter by induction on the number of vertices of $G$ with incoming edges. 

We first show that $m_n \leq \out_n$. The first reduction in $R_G^{\leftarrow}$ occurs at
vertex $n$ of $G$ and yields a graph $G^{(n)}_{T}$ with no incoming
edges to vertex $n$. If $a_n=0$ then $m_n=1$ and so the inequality
holds (since we require ${\rm outd}_i\geq 1$ for all $i \in [n]$). If $a_n>0$ then the tree $T$ has left vertices $\mathcal{I}_n\cup
  \{v_n\}$ and right vertices $\mathcal{O}_n$ with
  $\deg_T(v_n)=m_n$. Thus $m_n \leq \#\mathcal{O}_n = \out_n$. Also
compared to $G$, the graph $G^{(n)}_T$ has $\out_n-m_n$ new edges
  $(i,n+1)$ for $i<n$. Thus
\[
  (\out'_1 + \cdots + \out'_{n-1}) - (\out_1+\cdots + \out_{n-1}) = \out_n-m_n,
\]
where $\out'_i$ is the outdegree of vertex $i$ in $G^{(n)}_T$.  So for $k=1,\ldots,n-2$ we have 
\begin{equation} \label{eq:dom-order}
\out'_{n-1} + \out'_{n-2} + \cdots + \out'_{n-k} \leq \left(\out_{n-1}
+ \cdots + \out_{n-k}\right) + \out_n - m_n.
\end{equation}
If $G(m_1,\ldots,m_n)$ is a full dimensional leaf of
${R^{\leftarrow}_G}$ then $G^{(n)}_T(m_1,\ldots,m_{n-1})$ is a full
dimensional leaf of
the reduction tree ${R^{\leftarrow}_{G^{(n)}_T}}$. By induction we have $(m_{n-1},\ldots,m_2,m_1) \leq (\out'_{n-1},\ldots,\out'_1)$. This
combined with \eqref{eq:dom-order} gives
\begin{align*}
m_{n-1} + m_{n-2} + \cdots + m_{n-k} &\leq \out'_{n-1} + \out'_{n-2} +
\cdots + \out'_{n-k} \\
&\leq \out_{n} + \out_{n-1}+\cdots + \out_{n-k} - m_n.
\end{align*}
Thus $(m_n,\ldots,m_1) \leq (\out_n,\ldots,\out_1)$ as desired.
\end{proof}

We now prove the converse of the previous lemma.

\begin{proof}[Proof of Lemma \ref{lem:dominanceorder-converse}] 
Since $m_1+\cdots + m_n = \out_1+\cdots + \out_n$, then
$(m_1,\ldots,m_n) \geq (\out_1,\ldots,\out_n)$ is equivalent to
$(m_n,\ldots,m_1) \leq (\out_n,\ldots,\out_1)$. We show the result by
induction on the number of vertices of $G$ with incoming edges.  Let
$T$ be the tree encoded by the composition  $(0^{\ind_n-1},\out_n-m_n
,m_n-1)$. By Lemma~\ref{lem:big_red} the graph $G_T^{(n)}$ is a node
of the reduction tree $R_G^{\leftarrow}$. This graph has
$\#E(G)-n-m_n$ edges, no incoming edges to vertex $n$ and if $\out'_i$ is the outdegree of vertex $i$ in $G_T^{(n)}$ then 
\begin{equation} \label{eq:converse-outv}
\out'_{n-1} =  
\out_{n-1} + \out_n-m_n. 
\end{equation}
Now, the weak composition $(m_{n-1},\ldots,m_1)$ of $\#E(G)-n-m_n$  is $\leq (\out'_{n-1},\ldots,\out'_1)$ in dominance order since by \eqref{eq:converse-outv} 
\[
\out'_{n-1} + \cdots + \out'_{n-k} = \out_{n-1} + \cdots + \out_{n-k} + \out_n - m_n \geq m_{n-1} + \cdots + m_{n-k}.
\]
By induction $G_T^{(n)}(m_1,\ldots,m_{n-1})=G(\m)$ is a full dimensional leaf of the reduction tree of $G_T^{(n)}$. Since $G_T^{(n)}$ is a node of the reduction tree of $G$ then $G(\m)$ is a full dimensional leaf of the reduction tree $R_G^{\leftarrow}$ as desired.
\end{proof}

\subsection{Computing the volume  of $\F_G(\a)$} To finish the proof of the Lidskii volume formula 
\eqref{eq:vol} we fix the reduction tree $R_G^{\leftarrow}$ to
subdivide $\F_{G}(\a)$ into full dimensional leaves $\F_{G({\bf
    m})}(\a)$. Then
\[
\vol(\F_G(\a)) = \sum_{ {\bf m}} 
\vol(\F_{G({\bf m})}(\a)) \cdot N^{\leftarrow}_G.
\]
The relation \eqref{eq:vol} then follows by using Lemma~\ref{lem:Gm} to compute
$\vol(\F_{G(\m)}(\a))$, and using Lemma~\ref{lemma:ng} to compute
$N^{\leftarrow}_G$, and relabeling $m_i$ to $j_i+1$. The compositions ${\bf j}$
add up to $\om_1+\cdots + \om_n =m-n$ and they are exactly those that are  $\geq
(\om_1,\ldots,\om_n)$ in dominance order by Theorem~\ref{thm:leaves-dominance-order}.

\begin{example} \label{ex:k4vol}
The reduction tree in Figure~\ref{fig:complete} is in fact
$R_{k_4}^{\leftarrow}$. Since for $k_4$ we have that $(\om_1,\om_2,\om_3)=(2,1,0)$, the Lidskii formula \eqref{eq:vol} gives
\[
\vol\F_{k_4}({\bf 1}) = \binom{3}{2,1,0} K_{k_4}(2-2,1-1,0-0,0)
+ \binom{3}{3,0,0}K_{k_4}(3-2,0-1,0-0,0) = 3\cdot 1 + 1\cdot 1  =4.
\]
This corresponds to a subdivision of the polytope $\F_{k_4}(1,1,1,-3)$
by the plane $f_{12}=f_{24}$ as indicated by the reduction tree in
Figure~\ref{fig:complete}. See Figure~\ref{fig:subdivBVpoly} for an
illustration of this subdivision. For more examples, see Appendix~\ref{sec:appendix}.
\end{example}

\begin{figure}
    \centering
\includegraphics[scale=0.8]{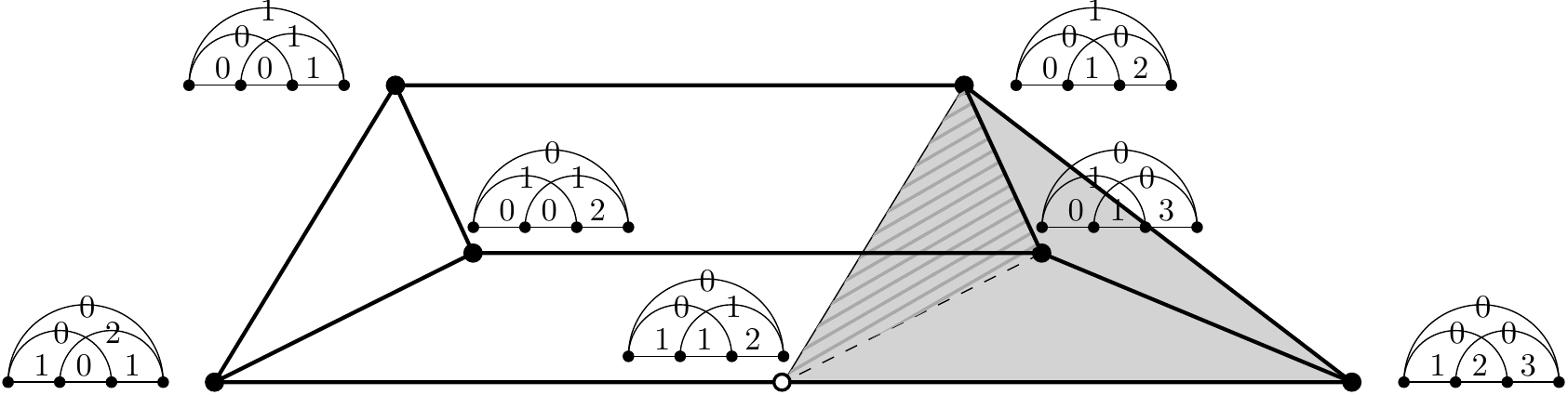}
    \caption{Canonical subdivision of the polytope $\F_{k_4}(1,1,1,-3)$ with volume $4$ and $7$ lattice points.}
    \label{fig:subdivBVpoly}
\end{figure}

\subsection{Alternative volume formula in terms of indegrees}

In this section we apply the symmetry of the Kostant partition function
(Corollary~\ref{cor:symkpg}) to prove
Corollary~\ref{cor:vol_indegree} which gives an indegree formula for
the volume of flow polytopes.  

\begin{proof}[Proof of Corollary~\ref{cor:vol_indegree}]
Using Proposition~\ref{prop:symfp} and \eqref{eq:vol} we get
\begin{align*}
\vol \F_G(\sum_{i=1}^nb_i,-b_1,\ldots,-b_n) &=
                                   \vol \F_{G^r}(b_n,\ldots,b_1,-\sum_{i=1}^n
                                    b_i)\\
&= \sum_{{\bf j}} \binom{m-k-n}{j_1,\ldots,j_n} b_n^{j_n}\cdots
  b_1^{j_1} K_{G^r}(j_n-\out_1^{G^r}+1,\ldots,j_1-\out_n^{G^r}+1,0).
\end{align*}
Using Corollary~\ref{cor:symkpg}   on the RHS above we get:
\begin{align*}
\vol \F_G(\sum_{i=1}^nb_i,-b_1,\ldots)  &= \sum_{{\bf j}} \binom{m-k-n}{j_1,\ldots,j_n} b_1^{j_n}\cdots
  b_n^{j_1} K_{G}(0, \out_n^{G^r} - j_1-1,\ldots, \out_1^{G^r} -j_n-1)\\
&= \sum_{{\bf j}} \binom{m-k-n}{j_1,\ldots,j_n} b_1^{j_1}\cdots
  b_n^{j_n} K_{G}(0, \ind_2^G - j_1-1,\ldots, \ind_{n+1}^{G} -j_n-1),\\
\end{align*}
where the last equality follows since the outdegree of vertex $i$ in $G^r$ equals the indegree of
vertex $n+2-i$ in $G$. 
\end{proof}

\section{Proof of the Lidskii formulas for lattice points}
\label{sec:kpf}

In this section we prove the Lidskii formulas \eqref{eq:kost} and \eqref{eq:kostmultiset} for the number of lattice points of flow polytopes. 
The key to our  combinatorial proof of \eqref{eq:kost} lies in comparing the basic and compounded reduction trees of the  graph $G$, as we do below.   

\subsection{The basic reduction tree revisited} 

  There are two important properties of  a  BRT: 
\begin{compactitem}
\item[1.]By Proposition~\ref{red0} we get
a subdivision of the original flow polytope from the leaves of a BRT.
\item[2.] Unlike in a CRT, in a BRT  we obtain
leaves that are not necessarily full dimensional. 
\end{compactitem}

\medskip

The following lemma is implicit in \cite[\S 5.3]{MM}:

\bl \label{lem:cb} Given the  CCRT for a graph $G$ on the vertex set $[n+1]$,  there is a BRT whose full dimensional leaves coincide with those of the CCRT. \el

 \proof \textit{(Sketch)} Construct the desired BRT by doing basic reductions on vertices $n, \ldots, 2$ in this order. At each vertex $i$ repeatedly do BR on the longest possible edges available, until there are still edges on which the BR can be performed.  (The length of an edge $(i,j)$ is $j-i$.) When there are no more edges proceed the same way at vertex $i-1$.
 \qed

\begin{example}
 Figure \ref{fig:brt} has an
example of a BRT where the full dimensional leaves are boxed. Note that
  in Figure \ref{fig:complete} we subdivided the same flow polytope
  with the CCRT and got the same leaves as the full dimensional leaves
  in this BRT. 
\end{example}

\begin{figure}
\centering
\includegraphics[scale=0.75]{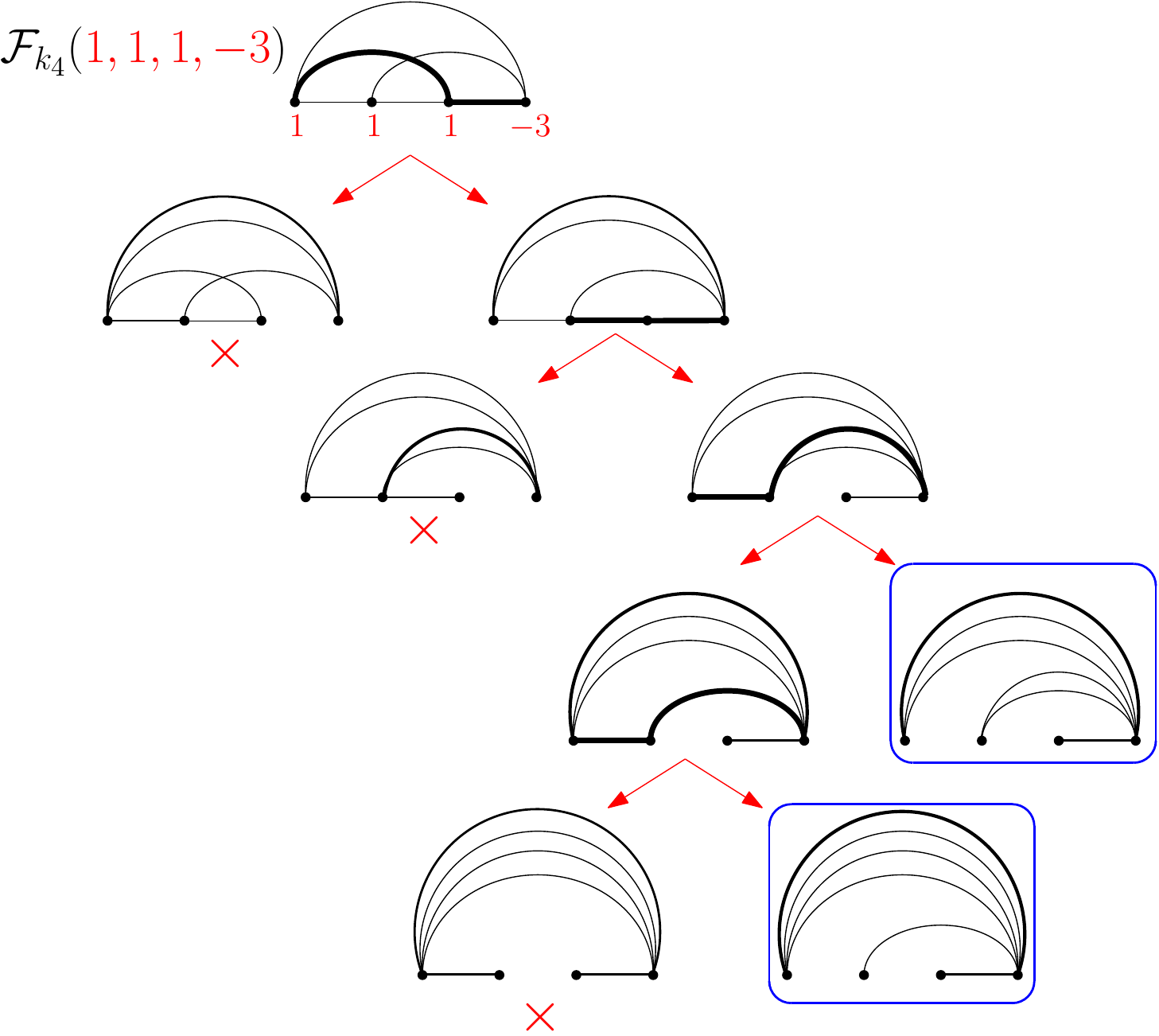}
\caption{Basic reduction tree (BRT) for the polytope
  $\F_{k_4}(1,1,1-3)$. The full-dimensional leaves are boxed while the
  lower dimensional are marked by $\textcolor{red}{\times}$. Note that
  in Figure \ref{fig:complete} we subdivided the same flow polytope
  with the CRT and got the same leaves as the full dimensional leaves in this BRT demonstrating Lemma \ref{lem:cb}.}
 \label{fig:brt}
\end{figure}

\subsection{Encoding the leaves of the BRT for the Kostant partition function}

By \eqref{gsKA} the function $K_G({\bf a})$ is obtained by the
following sums of coefficient extractions
\begin{equation} \label{k}
K_G({\bf a})=[{\bf x}^{\bf a}] {\prod_{(i,j)\in E(G)}} (1-x_ix_j^{-1})^{-1},
\end{equation}
where ${\bf x}^{\bf a} = x_1^{a_1}\cdots x_{n+1}^{a_{n+1}}$. The advantage of considering the BRT for obtaining \eqref{eq:kost} is that the reduction rule \eqref{R1} can easily be encoded with variables as follows. 
\begin{equation} \label{rel} \frac{1}{1-x_ax_i^{-1}}\frac{1}{1-x_i x_{b}^{-1}}= \frac{1}{1-x_a x_{b}^{-1}}\left(\frac{x_a x_i^{-1}}{1-x_a x_{i}^{-1}}+\frac{1}{1-x_i x_{b}^{-1}}\right).\end{equation}

We fix a BRT $R_G$ whose full dimensional leaves coincide with those
of CCRT  $R_G^{\leftarrow}$. When executing a BR on a graph $G$ as
defined in Section \ref{subsec:basicsubdiv}, we draw the BRT by letting
$G_1$ be the left child and $G_2$ be the right child of $G$. We assign
the monomial $x_a x_i^{-1}$ to the ``left" edge connecting $G$ and
$G_1$ and the constant $1$ to the ``right"edge connecting  $G$ and
$G_2$. We assign each node  $H$ of the BRT the monomial ${\bf x}^ {\bf
  H}$ obtained by multiplying the  monomials assigned to the left
edges on the unique path from the root of the BRT to $H$. We then have
the following expression for $K_G(\a)$.
 
 \begin{equation} \label{eq:ptnleaves}
K_G(\a) = \sum_H [{\bf x}^{\a}] \,\, {\bf x}^{\bf H} \prod_{(i,j) \in
  E(H)} (1-x_ix_j^{-1})^{-1} = \sum_{H} K_{H}({\bf a}-{\bf H}),
\end{equation}
where the sum is over the leaves $H$ of the BRT.

\begin{proposition} \label{prop:mon4H}
The monomial  ${\bf x}^ {\bf H}$ associated to a leaf $H$ of the BRT $R_G$ equals 
\begin{equation}
\label{eq:mon4H}
{\bf x}^{\bf H} = \prod_{i=1}^n x_i^{\out_i(H) - \out_i},
\end{equation}
where $\out_i(H)$ is the outdegree of vertex $i$ in $H$. 
\end{proposition}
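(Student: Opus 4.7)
The plan is to prove this by induction on the depth of the node $H$ in the BRT, i.e., the number of basic reductions applied to pass from the root $G$ to $H$. The base case ($H=G$) is trivial: the monomial is the empty product $1=\prod_i x_i^0$, and $\mathrm{out}_i(H)=\mathrm{out}_i$ for all $i$.

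For the inductive step, suppose the identity \eqref{eq:mon4H} holds for some node $G'$ of the BRT, and let $G''$ be a child of $G'$ obtained by performing a basic reduction on edges $(a,i),(i,b)\in E(G')$ with $a<i<b$. I would verify the two cases separately. If $G''=G'_1$ (left child), then $E(G'_1)=E(G')\setminus\{(i,b)\}\cup\{(a,b)\}$, so $\mathrm{out}_a(G'')=\mathrm{out}_a(G')+1$, $\mathrm{out}_i(G'')=\mathrm{out}_i(G')-1$, and every other outdegree is unchanged. By the labeling convention the edge $G'\to G''$ contributes the factor $x_a x_i^{-1}$, which exactly matches this change. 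If $G''=G'_2$ (right child), then $E(G'_2)=E(G')\setminus\{(a,i)\}\cup\{(a,b)\}$, so vertex $a$ loses one outgoing edge and gains one, while vertices $i$ and $b$ see only incoming edges changed; hence no outdegree changes at all, matching the factor $1$ assigned to this edge. In either case,
\[
\mathbf{x}^{\mathbf{G''}}=\mathbf{x}^{\mathbf{G'}}\cdot \prod_{j=1}^n x_j^{\mathrm{out}_j(G'')-\mathrm{out}_j(G')}=\prod_{j=1}^n x_j^{\mathrm{out}_j(G'')-\mathrm{out}_j},
\]
using the inductive hypothesis for $G'$ in the last step. Iterating along the unique root-to-leaf path proves \eqref{eq:mon4H} for every leaf $H$.

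There is no real obstacle here; the only minor point to check is that the monomials $\mathbf{x}^{\mathbf{H}}$ involve only the variables $x_1,\ldots,x_n$ as written (so the product in \eqref{eq:mon4H} can be taken over $i\in[n]$ rather than $[n+1]$). This is automatic: whenever the BR is applied, the middle vertex $i$ must have an outgoing edge, so $i\leq n$, and the associated factor $x_a x_i^{-1}$ uses indices $a<i\leq n$. Equivalently, $\mathrm{out}_{n+1}=0$ and this outdegree is preserved by every BR, so the $i=n+1$ factor is always $x_{n+1}^0$.
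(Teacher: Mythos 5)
Your proof is correct and follows essentially the same route as the paper: you track the change in outdegrees across a single basic reduction step and match it to the edge label, then accumulate along the root-to-leaf path. The paper phrases this without explicit induction and only spells out the left-step case (leaving the fact that right steps fix all outdegrees implicit), so your version is a slightly more careful rendering of the same argument.
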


\begin{proof}
 At each left step of the BRT involving a
reduction on a vertex $i$, an extra edge $(a,b)$ is added that is
outgoing with respect to $a$, an outgoing
edge $(i,b)$ is removed from the graph, and we record the remaining incoming edge
$(a,i)$ in the numerator as $x_a x_i^{-1}$. This monomial in the numerator records
adding an outgoing edge to $a$ and removing an outgoing edge to $i$. Thus the power of $x_i$ in
the  monomial ${\bf x}^{\bf H}$ is the number of extra outgoing edges
$(i,\cdot)$ in $H$. This number equals $\out_i(H) -
\out_i(G)$. 
\end{proof}

By Lemmas~\ref{lem:leaves} and \ref{lem:cb}, the full dimensional leaves of the BRT $R_G$ are the
graphs $G(\m)$. Next we calculate the contribution from each such leaf
in \eqref{eq:ptnleaves}.

\begin{lemma} \label{lem:kost_full_leaves}
For a full dimensional leaf $G(\m)$ of the BRT $R_G$ we have that,
\[
K_{G(\m)}({\bf a} - {\bf G(\m)}) = \binom{a_1+\out_1-1}{m_1-1}\binom{a_2+\out_2-1}{m_2-1} \cdots
\binom{a_{n}+\out_n-1}{m_n-1}.
\]
\end{lemma}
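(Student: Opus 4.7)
The proof is essentially a direct computation in two steps, using Proposition~\ref{prop:mon4H} to identify the netflow vector, and then using the structure of $G(\m)$ to evaluate $K_{G(\m)}$ in closed form.

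First, I would identify the netflow vector ${\bf a} - {\bf G(\m)}$. By Proposition~\ref{prop:mon4H} we have ${\bf x}^{\bf G(\m)} = \prod_{i=1}^n x_i^{\out_i(G(\m)) - \out_i}$, and in the graph $G(\m)$ every vertex $i \in [n]$ has outdegree exactly $m_i$ (all $m_i$ edges point to $n+1$) and $\out_{n+1}(G(\m)) = 0$. Hence the $i$-th entry of ${\bf a} - {\bf G(\m)}$ for $i \in [n]$ is $a_i - (m_i - \out_i) = a_i + \out_i - m_i$, while the $(n+1)$-th entry is the same $-\sum_i a_i$. As a sanity check, the entries of ${\bf a} - {\bf G(\m)}$ should sum to $0$; since $G(\m)$ is a full dimensional leaf of the BRT, Lemma~\ref{lem:leaves} (combined with Lemma~\ref{lem:cb}) gives $\sum_i m_i = \#E(G) = \sum_i \out_i$, so indeed $\sum_i(\out_i - m_i) = 0$.

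Next I would evaluate $K_{G(\m)}$ on this netflow. The graph $G(\m)$ consists of $n$ independent ``bundles'': for each $i \in [n]$, there are $m_i$ parallel edges from $i$ to $n+1$, and no other edges. Thus an integer flow on $G(\m)$ with netflow ${\bf b} = (b_1, \ldots, b_n, -\sum_i b_i)$ decomposes uniquely as an independent choice, for each $i$, of a weak composition of $b_i$ into $m_i$ nonnegative parts. The number of such compositions is $\binom{b_i+m_i-1}{m_i-1}$, so
\[
K_{G(\m)}({\bf b}) = \prod_{i=1}^n \binom{b_i+m_i-1}{m_i-1}.
\]
Plugging in $b_i = a_i + \out_i - m_i$ collapses the summand $b_i + m_i - 1$ to $a_i + \out_i - 1$, yielding
\[
K_{G(\m)}({\bf a} - {\bf G(\m)}) = \prod_{i=1}^n \binom{a_i + \out_i - 1}{m_i - 1},
\]
as desired.

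There is essentially no obstacle here beyond bookkeeping: the content of the lemma is that Proposition~\ref{prop:mon4H} converts the abstract shift ${\bf a} - {\bf H}$ coming from the left-edge monomials of the BRT into an explicit shift by $\out_i - m_i$, and the Kostant partition function of the product-of-bundles graph $G(\m)$ factors as a product of single-bundle contributions, each of which is a standard stars-and-bars binomial.
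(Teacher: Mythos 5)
Your proof is correct and takes essentially the same route as the paper: both invoke Proposition~\ref{prop:mon4H} to compute the shift ${\bf a}-{\bf G(\m)}$, and both observe that $K_{G(\m)}$ factors over the vertices into binomial coefficients $\binom{a_i+\out_i-1}{m_i-1}$. The only difference is presentational: the paper extracts these factors as coefficients of the generating series $\prod_i(1-x_i x_{n+1}^{-1})^{-m_i}$, while you enumerate the flows directly by stars-and-bars on each bundle of $m_i$ parallel edges --- the same calculation.
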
 

\begin{proof}
We calculate 
\[
K_{G(\m)}({\bf a} - {\bf G(\m)})  =  [{\bf x}^{\bf a}]\, {\bf x}^{G(\m)} \prod_{(i,j) \in
  E(G(\m))} (1-x_ix_j^{-1})^{-1}.
\]
By Proposition~\ref{prop:mon4H}, the monomial for the full
dimensional leaf $G(\m)$ is $\prod_{i=1}^n x_i^{m_i - \out_i}$. Next, we do the coefficient extraction to obtain the desired formula:
\begin{align*}
[{\bf x}^{\bf a}] \prod_{i=1}^n
\frac{x_i^{m_i-\out_i}}{(1-x_ix_{n+1}^{-1})^{m_i}} &=
                                                     [x_1^{a_1-m_1+\out_1}\cdots
                                                     x_n^{a_n-m_n+\out_n}]
                                                     \prod_{i=1}^n
                                                     \frac{1}{(1-x_i)^{m_i}}\\
&= \prod_{i=1}^n  [x_i^{a_i-m_i+\out_i}] (1-x_i)^{-m_i}   = \prod_{i=1}^n \binom{a_i+\out_i-1}{m_i-1}.
\end{align*}

\end{proof}

Next, we show that the lower dimensional leaves do not contribute to \eqref{eq:ptnleaves}.

\begin{lemma} \label{lem:kost_lower_leaves}
For a lower dimensional leaf $H$  of the BRT $R_G$ we have that $K_H({\bf a} - {\bf H}) = 0$.
\end{lemma}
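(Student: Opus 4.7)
The plan is to exploit the fact that a lower dimensional leaf of the BRT must contain an intermediate vertex with only incoming edges, which, combined with the standing hypothesis $\out_i\geq 1$ for $i\in\{1,\ldots,n\}$, forces the corresponding coordinate of the shifted netflow ${\bf a}-{\bf H}$ to be strictly positive and hence incompatible with any nonnegative flow on $H$.

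First, I characterize lower dimensional leaves. Any leaf $H$ of the BRT $R_G$ has the property that no intermediate vertex $i\in\{2,\ldots,n\}$ satisfies $\I_i(H)\neq\varnothing$ and $\O_i(H)\neq\varnothing$ simultaneously, since otherwise a further basic reduction could still be performed. By Lemmas~\ref{lem:cb} and \ref{lem:leaves} the full dimensional leaves of $R_G$ are precisely the graphs $G({\bf m})$, each of which has $\I_i=\varnothing$ for every $i\in\{2,\ldots,n\}$. Consequently, a lower dimensional leaf $H$ must contain at least one intermediate vertex $i\in\{2,\ldots,n\}$ with $\O_i(H)=\varnothing$ and $\I_i(H)\neq\varnothing$.

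Next, I compute the $i$-th coordinate of ${\bf a}-{\bf H}$ at such a vertex. Since $\out_i(H)=0$, Proposition~\ref{prop:mon4H} gives ${\bf H}_i=\out_i(H)-\out_i=-\out_i$, and the standing hypothesis of Theorem~\ref{thmA} gives $\out_i\geq 1$. Therefore the $i$-th coordinate of ${\bf a}-{\bf H}$ is $a_i+\out_i\geq 1$.

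Finally, I invoke the definition of $K_H({\bf b})$ as the number of nonnegative integer flows on $H$ with netflow ${\bf b}$. The conservation equation at the vertex $i$ isolated above reduces to $b_i=-\sum_{e\in\I_i(H)} f(e)\leq 0$, because $\O_i(H)=\varnothing$. Since $b_i\geq 1>0$ for ${\bf b}={\bf a}-{\bf H}$, no such flow exists, so $K_H({\bf a}-{\bf H})=0$. The entire content of the argument is the identification of the obstructing intermediate sink vertex; once that is in place, the hypothesis $\out_i\geq 1$ immediately closes the argument with no real calculational obstacle.
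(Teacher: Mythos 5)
Your proof is correct and follows essentially the same route as the paper: identify an intermediate vertex $k$ of the lower dimensional leaf $H$ with incoming but no outgoing edges, use Proposition~\ref{prop:mon4H} to compute the $k$-th coordinate of ${\bf a}-{\bf H}$ as $a_k+\out_k(G)\geq 1$, and observe that a sink vertex cannot have positive netflow. The only difference is cosmetic: you spell out the flow-conservation inequality at the sink, whereas the paper just asserts nonexistence of the integral flow.
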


\begin{proof}
We calculate $[{\bf x}^{\a}] \,\, {\bf x}^H \prod_{(i,j) \in
  E(H)} (1-x_ix_j^{-1})^{-1}$ for a lower dimensional leaf $H$. By Proposition~\ref{prop:mon4H} the
monomial for such leaf $H$ is $\prod_{j=1}^n
x_j^{\out_j(H)-\out_j}$. Since the leaf $H$ is not of the form $G(\m)$
then it has a vertex $k$ with incoming edges but no outgoing edges. Thus
\begin{align*}
 [{\bf x}^{\a}] \,\, {\bf x}^H \prod_{(i,j) \in
  E(H)} (1-x_ix_j^{-1})^{-1} &= [\prod_i x_i^{a_i-\out_i(H)+\out_i}] \prod_{(i,j) \in
  E(H)} (1-x_ix_j^{-1})^{-1} \\ &= K_H(a_1-\out_1(H)+\out_1,\ldots,\,a_k+\out_k\,,\ldots).
\end{align*}
However, since vertex $k$ has no outgoing edges then there are no integral
flows in $H$ with netflow $a_k+\out_k>0$ in vertex $k$. (Recall that
the  graphs $G$ we consider have $\out_i>0$ for all $i \in [n]$.)
\end{proof}

\subsection{Counting the lattice points of $\F_G(\a)$}
We now complete the proof of the Lidskii formula \eqref{eq:kost} for $K_G({\bf a})$. 

\begin{proof}[Proof of \eqref{eq:kost}]
By Lemma \ref{lem:kost_lower_leaves}, in \eqref{eq:ptnleaves} only the full dimensional leaves
contribute 
\[
K_G(\a) = \sum_{\m}  K_{G(\m)}(a_1-m_1-\out_1,\ldots,a_n-m_n-\out_n)
          \cdot N^{\leftarrow}_G
\]
We then use Lemma~\ref{lemma:ng} to compute $N^{\leftarrow}_G$ and
Lemma~\ref{lem:kost_full_leaves} to compute $K_{G(\m)}(\cdot)$,
\begin{align*}
K_G(\a) &= \sum_{\m}  K_{G(\m)}(a_1-m_1-\out_1,\ldots,a_n-m_n-\out_n)
          \cdot K_G(f_1(\m),\ldots,f_n(\m),0)\\
&= \sum_{\m}  \binom{a_1+\out_1-1}{m_1-1} \cdots
\binom{a_{n}+\out_n-1}{m_n-1} \cdot K_G(m_1-\out_1,\ldots,m_n-\out_n,0).
\end{align*}
\end{proof}

\begin{example} \label{ex:k4kost}
Continuing with Example~\ref{ex:k4vol}, the graph $k_3$ Lidskii formula
\eqref{eq:kost} gives
\[
K_{k_4}(1,1,1,-3) = \binom{3}{2}\binom{2}{1}K_{k_4}(0,0,0,0)
+ \binom{3}{3}\binom{2}{0}K_{k_4}(1,-1,0,0) = 6+1  =7.
\]
The subdivision of  $\F_{k_4}(1,1,1,-3)$
in Figure~\ref{fig:subdivBVpoly} yields
two cells with six and four lattice points each and three lattice points
in their intersection. These three points are only counted in the
first cell. For more examples, see Appendix~\ref{sec:appendix}.
\end{example}

By applying the symmetry of the Kostant partition function we
obtain an alternative formula to \eqref{eq:kost}.

\begin{corollary} 
\label{cor:kost_indegree}
Let $G$ be a graph on the vertex set $[n+1]$ with at least one
incoming edge at vertex $i$ for $i=2,\ldots,n+1$, and let $\b = (\sum_{i=1}^n
b_i,-b_1,\ldots,-b_n)$ with $b_i\in \mathbb{Z}_{\geq 0}$,  then 
\begin{equation} \label{eq:kost1}
K_{G}({\bf \b}) = \sum_{{{\bf j}}}
\binom{b_1+\i_2}{j_1} \cdots
\binom{b_{n}+\i_{n+1}}{j_{n}} \cdot   K_{G}(0, \i_2-j_1-1, \i_3-i_2-1,\ldots, \i_{n+1} - j_n-1),
\end{equation}
where the sum is over weak compositions ${\bf
  j}=(j_1,j_2,\ldots,i_n)$ of $m-n$ that are $\leq$
$(\i_2,\ldots,\i_{n+1})$ in dominance order..
\end{corollary}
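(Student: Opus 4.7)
The plan is to mirror the derivation of Corollary~\ref{cor:vol_indegree}, applying the Kostant partition function symmetry (Corollary~\ref{cor:symkpg}) twice with the outdegree Lidskii lattice-point formula~\eqref{eq:kost} sandwiched in between.

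First I would use Corollary~\ref{cor:symkpg} with the substitution $a_i=b_{n+1-i}$ to move the computation onto the reverse graph $G^r$:
\[
K_G(\b) \;=\; K_G\Big(\textstyle\sum_{i=1}^n b_i,\,-b_1,\ldots,-b_n\Big) \;=\; K_{G^r}\Big(b_n,b_{n-1},\ldots,b_1,\,-\textstyle\sum_{i=1}^n b_i\Big).
\]
The right-hand side is in the standard form required by~\eqref{eq:kost}, and since reversing $G$ interchanges indegrees with outdegrees through the vertex map $i\leftrightarrow n+2-i$, we have $\om_i^{G^r}=\ind^G_{n+2-i}-1=\i_{n+2-i}$. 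Expanding $K_{G^r}$ via~\eqref{eq:kost} thus yields a sum over weak compositions ${\bf k}=(k_1,\ldots,k_n)$ of $m-n$ with ${\bf k}\geq(\i_{n+1},\i_n,\ldots,\i_2)$ in dominance order, with binomial product $\prod_{i=1}^n\binom{b_{n+1-i}+\i_{n+2-i}}{k_i}$ and inner factor $K_{G^r}(k_1-\i_{n+1},\ldots,k_n-\i_2,0)$.

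Next I would reindex by $j_s:=k_{n+1-s}$. Under this substitution the binomial product becomes $\prod_{s=1}^n\binom{b_s+\i_{s+1}}{j_s}$, matching~\eqref{eq:kost1}. The dominance condition passes to $(j_1,\ldots,j_n)\leq(\i_2,\ldots,\i_{n+1})$: indeed, since $\sum k_i=\sum j_s=m-n=\sum_{i=2}^{n+1}\i_i$, the inequality $k_1+\cdots+k_t\geq\i_{n+1}+\cdots+\i_{n+2-t}$ is equivalent, by passing to complementary partial sums, to $j_1+\cdots+j_{n-t}\leq\i_2+\cdots+\i_{n+1-t}$ for every $t$.

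Finally I would apply Corollary~\ref{cor:symkpg} a second time to the inner factor $K_{G^r}(k_1-\i_{n+1},\ldots,k_n-\i_2,0)$. Its coordinates sum to zero (again using $\sum k_i=m-n=\sum_{i=2}^{n+1}\i_i$), so the argument is in standard form for $G^r$, and the symmetry combined with the reindexing $j_s=k_{n+1-s}$ converts it into $K_G(0,\i_2-j_1,\i_3-j_2,\ldots,\i_{n+1}-j_n)$, completing the proof of~\eqref{eq:kost1}. The main obstacle is purely combinatorial bookkeeping: the two coordinate reversals — one on the outer argument of $K_G$ and one on the inner argument of $K_{G^r}$ — must be tracked consistently so that the final summation index $j_s$ is attached to vertex $s+1$ of $G$ and the dominance inequality flips in the correct sense; no geometric input beyond that used in~\eqref{eq:vol_indegree} is required.
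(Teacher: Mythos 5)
Your proposal matches the paper's own proof, which is the one-line remark ``apply Corollary~\ref{cor:symkpg} to \eqref{eq:kost}, cf.\ the proof of Corollary~\ref{cor:vol_indegree}''; you have simply unpacked the bookkeeping of the two coordinate reversals explicitly, and your tracking of the dominance-order flip and the identity $\om_i^{G^r}=\i_{n+2-i}$ is correct. One thing worth flagging: your computation yields the inner factor $K_G(0,\i_2-j_1,\ldots,\i_{n+1}-j_n)$, which is consistent with Corollary~\ref{cor:vol_indegree} but not with the ``$-1$''s appearing in \eqref{eq:kost1} as printed (those, along with the stray ``$i_2$'' for $j_2$ and ``$i_n$'' for $j_n$, are typographical slips in the statement, arising from writing $\i_i$ in place of $\ind_i$ while retaining the ``$-1$'' that $\i_i=\ind_i-1$ already absorbs).
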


\begin{proof}
The result follows by applying Corollary~\ref{cor:symkpg} to
\eqref{eq:kost} (cf. proof of Corollary~\ref{cor:vol_indegree}).
\end{proof}

\subsection{Proof of the Lidskii formula \eqref{eq:kostmultiset} for
  lattice points}

Next we prove the alternative Lidskii formula \eqref{eq:kostmultiset}
for the Kostant partition function. We first prove the results for the case
$a_i \geq \ind_i(G)$ for $i=1,\ldots,n$ and then extend them to the range $0\leq
a_i<\ind_i(G)$ using the polynomiality property of the Kostant
partition function.

The result follows mostly the same argument that proves \eqref{eq:kost} but
 instead of \eqref{rel}, we encode the reduction rule \eqref{R1}  as 
\begin{equation} \label{relB} \frac{1}{1-x_ax_i^{-1}}\frac{1}{1-x_i
    x_{b}^{-1}}= \frac{1}{1-x_a x_{b}^{-1}}\left(\frac{1}{1-x_a
      x_{i}^{-1}}+\frac{x_ix_b^{-1}}{1-x_i
      x_{b}^{-1}}\right).\end{equation}
We fix a BRT $R_G$ whose full dimensional leaves coincide with those
of CCRT $R_G^{\leftarrow}$. When executing a BR on a graph $G$ as
defined in Section \ref{subsec:basicsubdiv}, we draw the BRT by having
$G_1$ be the left child and $G_2$ be the right child of $G$. We assign the constant $1$ to the ``right"edge connecting  $G$ and
$G_1$, and the monomial $x_i x_b^{-1}$ to the ``right" edge connecting $G$ and
$G_2$.
Then the analogue of \eqref{eq:ptnleaves} is 
\begin{equation} \label{eq:ptnleavesmultiset}
K_G(\a) = \sum_H   [{\bf x}^{\bf a}] \,{\bf x}^{\bf H'} \prod_{(i,j)
  \in E(H)} (1-x_ix_j^{-1}){-1} = \sum_H K_H({\bf a} - {\bf H}'),
\end{equation}
where the sum is over all leaves of the BRT $R_G$. 

\begin{proposition} \label{prop:mon4Hmultiset}
The monomial ${\bf x}^{\bf H'}$ associated to a leaf $H$ of the BRT
$R_G$ equals
\[
{\bf x}^{\bf H'} = \prod_{j=2}^{n+1} x_j^{\ind_j- \ind_j(H)}.
\]
\end{proposition}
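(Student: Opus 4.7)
The plan is to mirror the argument of Proposition \ref{prop:mon4H}, but bookkeeping indegrees rather than outdegrees, using the alternative monomial assignment given by \eqref{relB}.

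First I would unpack what \eqref{relB} says about the BRT edges. At a basic reduction on vertex $i$ with edges $(a,i), (i,b)$ of $G$, the identity \eqref{relB} associates the constant $1$ to the left child $G_1$ (where $E(G_1) = E(G) \setminus \{(i,b)\} \cup \{(a,b)\}$, i.e.\ the edge $(a,i)$ is retained) and the monomial $x_i x_b^{-1}$ to the right child $G_2$ (where $E(G_2) = E(G) \setminus \{(a,i)\} \cup \{(a,b)\}$, i.e.\ the edge $(i,b)$ is retained). Thus only right steps in the BRT contribute to ${\bf x}^{{\bf H}'}$, and each such step contributes exactly $x_i x_b^{-1}$.

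Next I would show that left steps preserve all indegrees while right steps have a very specific indegree effect. At a left step from $G$ to $G_1$, vertex $i$ loses $(i,b)$ (an outgoing edge) and retains its incoming edges, so $\ind_i$ is unchanged; vertex $b$ loses $(i,b)$ and gains $(a,b)$, so $\ind_b$ is also unchanged; vertex $a$ gains an outgoing edge, so $\ind_a$ is unchanged. Hence \emph{left steps preserve the indegree vector}. At a right step from $G$ to $G_2$, vertex $i$ loses the incoming edge $(a,i)$ (so $\ind_i$ drops by $1$), vertex $b$ gains the incoming edge $(a,b)$ (so $\ind_b$ increases by $1$), and no other indegree changes. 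Crucially, these are precisely the $+1$ and $-1$ exponents of $x_i x_b^{-1}$.

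Finally I would accumulate these contributions along the unique path from the root $G$ down to the leaf $H$. For each vertex $j \in \{2,\ldots,n+1\}$, the exponent of $x_j$ in ${\bf x}^{{\bf H}'}$ equals
\[
\#\{\text{right steps on the path where } j \text{ plays the role of } i\} - \#\{\text{right steps on the path where } j \text{ plays the role of } b\},
\]
which by the indegree tracking above is exactly $\ind_j(G) - \ind_j(H)$. Vertex $j=1$ never has any incoming edges, so it contributes trivially and the product may start at $j=2$, yielding the stated formula.

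The argument is essentially a mirror image of Proposition \ref{prop:mon4H}, so I do not anticipate a deep obstacle; the main care is to verify the invariance of indegrees under left steps at all three vertices $a$, $i$, $b$ simultaneously, and to check that the convention ``right steps of the BRT'' is consistent with the convention used in \eqref{eq:ptnleavesmultiset}.
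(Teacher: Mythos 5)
Your proof is correct and follows exactly the same route as the paper's (very brief) proof sketch: only right steps contribute, each contributing $x_i x_b^{-1}$, and this is matched to the $-1/+1$ change in $\ind_i$ and $\ind_b$ produced by removing the incoming edge $(a,i)$ and adding the incoming edge $(a,b)$. Your added verification that left steps leave the whole indegree vector unchanged is a helpful detail that the paper omits.
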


\begin{proof}[Proof sketch]
This monomial
comes from the {\em right} steps in the reduction tree, where an
incoming edge $(a,i)$ is removed and we record the outgoing edge
$(i,b)$ in the numerator as $x_ix_b^{-1}$.
\end{proof}

As in the proof of \eqref{eq:kost}, the full dimensional leaves of
the BRT $R_G$ are the graphs $G({\bf m})$. Next we calculate the
contribution from each such leaf in \eqref{eq:ptnleavesmultiset}.

\begin{lemma} \label{lem:kost_full_leaves_multiset}
Let $\a = (a_1,\ldots,a_n, -\sum_{i=1}^n a_i)$ with $a_i \in
\mathbb{Z}_{\geq 0}$ with $a_i \geq \ind_i(G)$. For a full dimensional leaf $G({\bf m})$ of the BRT $R_G$ we have that
\[
K_{G({\bf m})}({\bf a} - {\bf G({\bf m})'}) =  \multiset{a_1-\ind_1+1}{m_1-1}\cdots
\multiset{a_{n}-\ind_n+1}{m_n-1}.
\] 
\end{lemma}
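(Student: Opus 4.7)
The plan is to compute $K_{G(\m)}(\a - \mathbf{G(\m)}')$ directly by unpacking the generating function \eqref{gsKA}, using Proposition~\ref{prop:mon4Hmultiset} to pin down the weight $\mathbf{x}^{\mathbf{G(\m)}'}$, and exploiting the fact that the graph $G(\m)$ is a union of trivial stars with center $n+1$ so the product formula decouples into single-variable extractions.

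First, I would compute $\mathbf{x}^{\mathbf{G(\m)}'}$. The graph $G(\m)$ consists of $m_i$ copies of the edge $(i,n+1)$, so
\[
\ind_j(G(\m)) = 0 \text{ for } 2\le j\le n, \qquad \ind_{n+1}(G(\m)) = \sum_{i=1}^n m_i = \#E(G).
\]
By Proposition~\ref{prop:mon4Hmultiset} (and using $\sum_{j=2}^{n+1} \ind_j = \#E(G)$),
\[
\mathbf{x}^{\mathbf{G(\m)}'} \;=\; \prod_{j=2}^{n}x_j^{\ind_j}\cdot x_{n+1}^{\,\ind_{n+1}-\#E(G)} \;=\; x_1^{-\ind_1}\prod_{j=1}^{n}x_j^{\ind_j}\cdot x_{n+1}^{-\sum_{j=1}^n \ind_j},
\]
where the first equality uses $\ind_1=0$ to rewrite the formula symmetrically across all source vertices.

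Next I would substitute this into \eqref{eq:ptnleavesmultiset} for the leaf $G(\m)$. Since every edge of $G(\m)$ has the form $(i,n+1)$,
\[
\prod_{(i,j)\in E(G(\m))}(1-x_ix_j^{-1})^{-1}=\prod_{i=1}^n (1-x_ix_{n+1}^{-1})^{-m_i}.
\]
Hence, writing $A=\sum_i a_i$,
\[
K_{G(\m)}(\a-\mathbf{G(\m)}') \;=\; \bigl[\,x_1^{a_1}\cdots x_n^{a_n}x_{n+1}^{-A}\bigr]\;\prod_{i=1}^n x_i^{\ind_i}\,x_{n+1}^{-\sum_i\ind_i}\,(1-x_ix_{n+1}^{-1})^{-m_i}.
\]
Absorbing the monomial prefactor into the coefficient extraction, this equals
\[
\bigl[\,x_1^{a_1-\ind_1}\cdots x_n^{a_n-\ind_n}\,x_{n+1}^{-\sum_i(a_i-\ind_i)}\bigr]\prod_{i=1}^n (1-x_ix_{n+1}^{-1})^{-m_i}.
\]

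Finally, because the product factors across $i$, the exponent condition on $x_{n+1}$ is automatically consistent and the coefficient splits as a product of univariate extractions
\[
\prod_{i=1}^n[\,x_i^{a_i-\ind_i}\,](1-x_i)^{-m_i}\;=\;\prod_{i=1}^n\binom{a_i-\ind_i+m_i-1}{m_i-1}\;=\;\prod_{i=1}^n\multiset{a_i-\ind_i+1}{m_i-1},
\]
which is exactly the claimed formula. The hypothesis $a_i\ge \ind_i(G)$ is used precisely to guarantee that the extracted exponent $a_i-\ind_i$ is non-negative, so the generating function identity for $(1-x_i)^{-m_i}$ applies. The main (minor) obstacle is bookkeeping the monomial shift from $\mathbf{x}^{\mathbf{G(\m)}'}$ consistently across all $n+1$ variables; once one checks that $\sum_{j=2}^{n+1}\ind_j(G)=\#E(G)$ the rest is a clean univariate expansion, and no further combinatorics is needed. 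Extending to the range $0\le a_i<\ind_i(G)$ is then deferred to the polynomiality argument mentioned after the lemma.
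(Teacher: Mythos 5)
Your proof is correct and follows essentially the same route as the paper's: both use Proposition~\ref{prop:mon4Hmultiset} to determine the monomial $\mathbf{x}^{\mathbf{G(\m)}'}$, substitute into the generating-function expression from \eqref{eq:ptnleavesmultiset}, and observe that the coefficient extraction factors into $n$ univariate expansions of $(1-x_i)^{-m_i}$, with the hypothesis $a_i\geq\ind_i$ guaranteeing nonnegative exponents. Your version is somewhat more explicit about the $x_{n+1}$ bookkeeping, which happens to correct a small sign/typo in the monomial as printed in the paper ($x_{n+1}^{m}$ there should be $x_{n+1}^{\ind_{n+1}(G)-m}$), but the substance of the argument is identical.
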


\begin{proof}
By Proposition~\ref{prop:mon4Hmultiset} the monomials for each full dimensional leaf $G({\bf m})$ are the
same: 
\[
x_{n+1}^{m}\prod_{j=1}^{n+1} x_j^{\ind_j(G)}.
\]
(For convenience, we included superflously the variable $x_1$ since $\ind_1(G)=0$.) Thus
\begin{align*}
[{\bf x}^{\bf a}] \prod_{j=1}^n
  \frac{x_j^{\ind_j}}{(1-x_jx_{n+1}^{-1})^{m_j}} = \prod_{j=1}^n \left(
  [x_j^{a_j-\ind_j}] (1-x_j)^{-m_j} \right) = \prod_{j=1}^n
  \binom{a_j-\ind_j + m_j-1}{m_j-1},
\end{align*}
where we used the assumption that $a_j - \ind_j \geq 0$ for $j=1,\ldots,n$.
\end{proof}

Next, we show that the lower dimensional leaves do not contribute to \eqref{eq:ptnleavesmultiset}.

\begin{lemma}  \label{lem:kost_lower_leaves_multiset}
Let $\a = (a_1,\ldots,a_n, -\sum_{i=1}^n a_i)$ with $a_i \in
\mathbb{Z}_{\geq 0}$ with $a_i \geq \ind_i(G)$. For  a lower
dimensional leaf $H$ of the BRT $R_G$ we have that $K_H({\bf a} - {\bf
  H}') =0$. 
\end{lemma}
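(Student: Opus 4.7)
The plan is to mirror the proof of Lemma~\ref{lem:kost_lower_leaves}, but with the monomial assignment coming from \eqref{relB} and Proposition~\ref{prop:mon4Hmultiset} in place of Proposition~\ref{prop:mon4H}. First I would characterize the lower dimensional leaves of $R_G$: since $H$ is a leaf of the BRT, no vertex $i \in \{2,\ldots,n\}$ has both an incoming and an outgoing edge in $H$, so each such vertex is either a source or a sink in $H$. The full dimensional leaves are exactly those in which every $i \in \{2,\ldots,n\}$ is a source, i.e.\ the graphs $G(\m)$ of Lemma~\ref{lem:leaves}. Hence in a lower dimensional leaf $H$ there must exist some vertex $k \in \{2,\ldots,n\}$ with incoming edges but no outgoing edges; in particular $\ind_k(H) \geq 1$ and $\out_k(H) = 0$.

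Next I would compute the $k$-th entry of the shifted netflow vector $\a - \H'$. By Proposition~\ref{prop:mon4Hmultiset} the exponent of $x_k$ in ${\bf x}^{\bf H'}$ equals $\ind_k(G) - \ind_k(H)$, so the $k$-th entry of $\a-\H'$ is
\[
a_k - \ind_k(G) + \ind_k(H).
\]
Using the hypothesis $a_k \geq \ind_k(G)$ this quantity is at least $\ind_k(H) \geq 1$, so it is strictly positive. On the other hand, any nonnegative integer flow on $H$ satisfies the conservation equation \eqref{eqn:flowA} at $k$, namely (netflow at $k$)$\,=\,$(out-flow at $k$)$\,-\,$(in-flow at $k$). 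Since $k$ has no outgoing edges in $H$, the out-flow at $k$ vanishes and the LHS is nonpositive, contradicting the strictly positive value just computed. Therefore no such flow exists and $K_H(\a-\H') = 0$.

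I do not anticipate a substantive obstacle here; the argument is a direct transport of the one in Lemma~\ref{lem:kost_lower_leaves}, where the only subtlety is keeping track of how the monomials assigned by \eqref{relB} differ from those assigned by \eqref{rel}. The mildly tricky bookkeeping is in Proposition~\ref{prop:mon4Hmultiset} itself: at a right step involving edges $(a,i)$ and $(i,b)$, the incoming edge $(a,i)$ is removed and the new incoming edge $(a,b)$ is created, which drops $\ind_i(\cdot)$ by one and raises $\ind_b(\cdot)$ by one, matching the recorded factor $x_ix_b^{-1}$. This is what ensures that the monomial at a leaf encodes precisely the drops in indegree and thereby shifts the Kostant partition function evaluation to the vertex $k$ where positivity of the netflow becomes the crucial obstruction.
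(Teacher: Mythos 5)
Your proof is correct and follows essentially the same route as the paper's: you use Proposition~\ref{prop:mon4Hmultiset} to identify the shifted net\-flow at a sink vertex $k$ as $a_k - \ind_k(G) + \ind_k(H)$, invoke the hypothesis $a_k \geq \ind_k(G)$ together with $\ind_k(H)\geq 1$ to see it is strictly positive, and note that a vertex with no outgoing edges cannot support a positive netflow. This is exactly the argument in the paper.
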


\begin{proof}
By Proposition~\ref{prop:mon4Hmultiset} the monomial for such leaf $H$ is $\prod_{j=2}^{n+1} x_j^{\ind_j -
  \ind_j(H)}$. Since the leaf $H$ is not of the form $G({\bf m})$ then
it has a vertex $k\geq 2$ with incoming but no outgoing edges. Thus
\[
[{\bf x}^{\bf a}] \,\,\, {\bf x}^{\bf H'} \prod_{(i,j) \in E(H)}
(1-x_ix_j^{-1})^{-1} =
K_H(a_1,a_2-\ind_2+\ind_2(H),\ldots,a_k-\ind_k+\ind_k(H),\ldots).
\]
However, since vertex $k$ has no outgoing edges then there are no
integral flows in $H$ with netflow $a_k -\ind_k + \ind_k(H) >0$ at
this vertex.
\end{proof}

\begin{proof}[Proof of \eqref{eq:kostmultiset}]
We start by assuming that $a_i \geq \ind_i(G)$. By Lemma~\ref{lem:kost_lower_leaves_multiset}, in
\eqref{eq:ptnleavesmultiset} only the full dimensional leaves contribute
\[
K_G(\a) = \sum_{\bf m} K_{G(\m)}(a_1-\ind_1,\ldots,a_n-\ind_n) \cdot N_G^{\leftarrow}.
\]
We then use Lemma~\ref{lemma:ng} to compute $N^{\leftarrow}_G$ and
Lemma~\ref{lem:kost_full_leaves_multiset} to compute
$K_{G(\m)}(\cdot)$,
\begin{align*}
K_G(\a) &= \sum_{\m}  K_{G(\m)}(a_1-\ind_1,\ldots,a_n-\ind_n) \cdot
          K_G(m_1-\out_1,\ldots,m_n-\out_n,0),\\
&=  \sum_{\m} \multiset{a_1-\ind_1+1}{m_1-1}\cdots
\multiset{a_{n}-\ind_n+1}{m_n-1} \cdot K_G(m_1-\out_1,\ldots,m_n-\out_n,0).
\end{align*}
Finally, to extend the identity to the cases where $0\leq a_i<\ind_i(G)$ we
use  the polynomiality property of $K_G({\bf a})$ (Proposition~\ref{prop:polynomiality4kpf}).
\end{proof}

\begin{example}
To contrast Example~\ref{ex:k4kost}, for the graph $k_3$ we have
$(\i_1,\i_2,\i_3) = (-1,0,1)$, so the alternative Lidskii formula
\eqref{eq:kostmultiset}, gives
\[
K_{k_4}(1,1,1,-3) = \binom{3}{2}\binom{1}{1}K_{k_4}(0,0,0,0)
+ \binom{4}{3}\binom{0}{0}K_{k_4}(1,-1,0,0) = 3+4 =7.
\]
The subdivision of  $\F_{k_4}(1,1,1,-3)$
in Figure~\ref{fig:subdivBVpoly} yields
two cells with six and four lattice points each and three lattice points
in their intersection. In contrast with Example~\ref{ex:k4kost}, these
three points are now counted in the second cell. For more examples, see Appendix~\ref{sec:appendix}.
\end{example}

\section{Enumerative properties of the canonical subdivision and Lidskii formulas} \label{sec:numpieces}

In this section we give enumerative properties of the Lidskii formulas and of the canonical subdivision of flow polytopes $\F_{G}(\a)$ we used to prove Theorem \ref{thmA}. We illustrate the results with the Stanley--Pitman polytope ($G = \Pi_n$), the Baldoni--Vergne polytope ($G = k_{n+1}$), and a generalization of the former (see Section~\ref{sec:genPS}).

\subsection{Number of types of cells in the  subdivision}
Recall that we call cells the full dimensional polytopes in the
canonical subdivision of $\F_{G}(\a)$. In this section we assume $a_i
\in \Z_{> 0}$ so that the cells are present. Moreover, two cells are said to be of the same type if they are integrally equivalent.

\begin{theorem} \label{cor:corrlatptsPS}
The types of cells of the canonical subdivision of
$\F_G(a_1,a_2,\ldots,a_n,-\sum_i a_i)$
are in one-to-one correspondence with lattice points of $\PS(\om_n,\om_{n-1},\ldots,\om_2)$.
\end{theorem}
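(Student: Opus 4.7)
The plan is to exhibit an explicit bijection between the types of cells of the canonical subdivision and the lattice points of $\PS(\om_n, \om_{n-1}, \ldots, \om_2)$, rather than compare cardinalities via determinantal formulas.

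First, I would invoke Theorem~\ref{thm:leaves-dominance-order} to identify the set indexing types of cells. By construction, each type of cell in the canonical subdivision of $\F_G(\a)$ corresponds to a full-dimensional leaf $G(\m)$ of the CCRT $R^{\leftarrow}_G$, and these leaves are indexed by compositions $\m = (m_1, \ldots, m_n)$ of $\#E(G) = m$ with $\m \geq (\out_1,\ldots,\out_n)$ in dominance order. Equivalently, writing $j_i := m_i - 1$, the indexing set is the collection of weak compositions $\mathbf{j} = (j_1, \ldots, j_n)$ of $m-n$ satisfying $\mathbf{j} \geq (\om_1,\ldots,\om_n)$ in dominance order.

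Next, I would unpack the defining inequalities of $\PS(\om_n, \om_{n-1}, \ldots, \om_2)$. A lattice point $(x_1,\ldots,x_{n-1}) \in \mathbb{Z}^{n-1}$ of this polytope satisfies $x_i \geq 0$ and
\[
x_1 + x_2 + \cdots + x_i \;\leq\; \om_n + \om_{n-1} + \cdots + \om_{n-i+1}
\qquad (i = 1, \ldots, n-1).
\]
Now define the map $\Psi(\mathbf{j}) := (j_n, j_{n-1}, \ldots, j_2)$, which simply reads off the last $n-1$ entries of $\mathbf{j}$ in reverse order. Using the fact that $j_1 + \cdots + j_n = \om_1 + \cdots + \om_n = m-n$, the dominance inequality $j_1 + \cdots + j_{n-i} \geq \om_1 + \cdots + \om_{n-i}$ is equivalent to the complementary inequality $j_{n-i+1} + \cdots + j_n \leq \om_{n-i+1} + \cdots + \om_n$, which is precisely the $i$-th Pitman--Stanley inequality for $(x_1, \ldots, x_i) = (j_n, \ldots, j_{n-i+1})$. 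Also, $x_i \geq 0$ follows since $\mathbf{j}$ is a weak composition.

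Finally I would verify invertibility: given $(x_1,\ldots,x_{n-1}) \in \PS(\om_n,\ldots,\om_2) \cap \mathbb{Z}^{n-1}$, set $j_i := x_{n+1-i}$ for $i = 2,\ldots,n$, and recover $j_1 := (m-n) - \sum_{i=2}^n j_i$. The last Pitman--Stanley inequality ($i = n-1$) gives $j_1 \geq \om_1 \geq 0$, so $\mathbf{j}$ is a weak composition of $m-n$; the remaining PS inequalities translate back to dominance inequalities $j_1 + \cdots + j_k \geq \om_1 + \cdots + \om_k$ for all $k$. Thus $\Psi$ is a bijection, completing the proof. The argument is essentially a direct bookkeeping translation, so no step is a genuine obstacle; the only point requiring care is keeping the reversal of indices consistent when converting the dominance condition $\mathbf{j} \geq (\om_1,\ldots,\om_n)$ into the Pitman--Stanley partial-sum inequalities, which I would handle by using the total sum $m-n$ to pass between a ``left'' dominance statement and its ``right'' complementary statement.
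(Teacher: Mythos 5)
Your proof is correct and is essentially identical to the paper's own argument: both index the types of cells by the weak compositions $\mathbf{j}\geq(\om_1,\ldots,\om_n)$ in dominance order (via Theorem~\ref{thm:leaves-dominance-order}) and then rewrite the ``left'' dominance inequalities as complementary ``right'' partial-sum inequalities using the fixed total sum $m-n$, recognizing those as the defining inequalities of $\PS(\om_n,\ldots,\om_2)$ under the reversal $(j_n,\ldots,j_2)$. The paper makes the equivalence of the two inequality systems and the correspondence explicit in three displayed lines, which is exactly the bijection $\Psi$ you describe.
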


\begin{proof}
The cells of the canonical subdivision of $\F_{G}(\a)$ are characterized by
tuples $(j_1,\ldots,j_n)$ of nonnegative integers satisfying
\begin{align*}
j_1+\dots + j_k &\geq \om_1+\cdots + \om_k, \text{ for } k=1,\ldots,n-1\\
j_1+\cdots+j_n &=\om_1+\cdots
+ \om_n. 
\end{align*}
These conditions are equivalent to 
\begin{align*}
j_{n} + j_{n-1} +\dots + j_{n-k+1} &\leq \om_n+\om_{n-1}+\cdots + \om_{n-k+1}, \text{ for } k=1,\ldots,n-1\\
j_1+\cdots+j_n &=\om_1+\cdots
+ \om_n, 
\end{align*}
which in turn is equivalent to the tuple $(j_n,j_{n-1},\ldots,j_2)$ being a lattice point of the Pitman--Stanley polytope
$\PS(\om_{n},\om_{n-1},\ldots,\om_2)$ and $j_1=(\om_1+\cdots+\om_n)-(j_2+\cdots+j_n)$.
\end{proof}

\begin{corollary} \label{cor:numtypescells}
The number $N$ of types of cells of the canonical subdivision of the
polytope $\F_{G}(\a)$ is the number of plane partitions of shape $(\om_n, \om_n+\om_{n-1},\ldots, \om_n + \cdots + \om_2)$ with largest part at most $2$ which is given by the following determinant
\[
N = \det \left[ \binom{\om_{i+1}+\cdots+\om_{n} + 1}{i-j+1} \right]_{1\leq i,j\leq n-1}.
\] 
\end{corollary}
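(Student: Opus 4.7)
The plan is to chain the two results that are already in place: Theorem~\ref{cor:corrlatptsPS} and Theorem~\ref{thm:detlatptsPS}. Theorem~\ref{cor:corrlatptsPS} just established a bijection between types of cells of the canonical subdivision of $\F_G(\a)$ and lattice points of the $(n-1)$-dimensional Pitman--Stanley polytope $\PS(\om_n,\om_{n-1},\ldots,\om_2)$. So the counting problem for $N$ reduces immediately to computing $\#(\PS(\om_n,\om_{n-1},\ldots,\om_2)\cap \Z^{n-1})$.

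For this I would invoke Theorem~\ref{thm:detlatptsPS} with the parameter vector $(a_1,a_2,\ldots,a_{n-1}) := (\om_n,\om_{n-1},\ldots,\om_2)$, i.e.\ $a_k=\om_{n-k+1}$ for $k=1,\ldots,n-1$ (noting that Theorem~\ref{thm:detlatptsPS} is stated for $n$ parameters and we are applying it with $n-1$ in place of $n$). The plane-partition shape $(a_1,a_1+a_2,\ldots,a_1+\cdots+a_{n-1})$ then becomes $(\om_n,\om_n+\om_{n-1},\ldots,\om_n+\cdots+\om_2)$, matching the shape claimed in the corollary. For the determinant, the generic entry is
\[
\binom{a_1+\cdots+a_{(n-1)-i+1}+1}{i-j+1}=\binom{a_1+\cdots+a_{n-i}+1}{i-j+1},
\]
and under the relabeling $a_k=\om_{n-k+1}$ the running sum is $a_1+\cdots+a_{n-i}=\om_n+\om_{n-1}+\cdots+\om_{i+1}$, giving exactly the claimed entry $\binom{\om_{i+1}+\cdots+\om_n+1}{i-j+1}$ with indices running over $1\le i,j\le n-1$.

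There is essentially no obstacle: both ingredients are proved and the argument is a one-line application followed by an index chase. The only point requiring care is the dimensional bookkeeping, since the Pitman--Stanley polytope arising from Theorem~\ref{cor:corrlatptsPS} has $n-1$ parameters (the coordinate $\om_1$ is absent), so Theorem~\ref{thm:detlatptsPS} is applied in size $n-1$ rather than $n$; this is what makes the displayed determinant $(n-1)\times(n-1)$, consistent with the statement of the corollary.
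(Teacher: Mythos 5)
Your proposal is exactly the paper's own proof: combine Theorem~\ref{cor:corrlatptsPS} with Theorem~\ref{thm:detlatptsPS} under the relabeling $(a_1,\ldots,a_{n-1})=(\om_n,\ldots,\om_2)$. Your index bookkeeping, including the reduction from $n$ to $n-1$ parameters, is correct and matches what the paper does.
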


\begin{proof}
The result follows by combining Theorem~\ref{cor:corrlatptsPS} with Theorem~\ref{thm:detlatptsPS}.
\end{proof}

We next apply this result to the Pitman--Stanley polytope and the Baldoni--Vergne polytope.

\begin{corollary} \label{cor:numtypesPS}
The number of types of cells of the canonical subdivision of the Pitman--Stanley polytope $\F_{\Pi_n}(\a)$ is $C_n$.
\end{corollary}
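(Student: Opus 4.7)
The plan is to apply Theorem~\ref{cor:corrlatptsPS} (the cells-to-lattice-points correspondence) and then identify the resulting lattice-point count as $C_n$ via a direct bijection with Dyck paths (or equivalently, ballot sequences).

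First I would compute the values $\om_i=\out_i-1$ for the graph $\Pi_n$. Every vertex $i\in\{1,\ldots,n-1\}$ has precisely the two outgoing edges $(i,i+1)$ and $(i,n+1)$, while vertex $n$ has the two (multi-)edges $(n,n+1)$ coming from the two defining families in $E(\Pi_n)$; in either case $\out_i=2$, so $\om_i=1$ for all $i=1,\ldots,n$. (This is consistent with the Lidskii volume expression for $\F_{\Pi_n}(\a)$ stated in the introduction, where the compositions $\bf j$ satisfy $j_1+\cdots+j_k\geq k$.) Applying Theorem~\ref{cor:corrlatptsPS} then shows that the types of cells of the canonical subdivision are in one-to-one correspondence with lattice points of
\[
\PS(\om_n,\om_{n-1},\ldots,\om_2)=\PS(\underbrace{1,1,\ldots,1}_{n-1}).
\]

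Next, I would count these lattice points. By definition they are tuples $(x_1,\ldots,x_{n-1})\in\Z_{\geq0}^{\,n-1}$ with $x_1+\cdots+x_k\leq k$ for $k=1,\ldots,n-1$. To identify this count with $C_n$ I would exhibit a bijection with Dyck paths of length $2n$: given such a tuple, build the lattice path that for each $i=1,\ldots,n$ performs one up-step followed by $x_i$ down-steps (setting $x_n:=n-(x_1+\cdots+x_{n-1})\geq 0$, which is forced by $x_1+\cdots+x_{n-1}\leq n-1$). The partial-sum conditions $x_1+\cdots+x_k\leq k$ are exactly what is needed for the path to stay weakly above the $x$-axis, and the total number of down-steps equals $n$, so one obtains a Dyck path from $(0,0)$ to $(2n,0)$. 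The map is clearly reversible, yielding $|\PS(1^{n-1})\cap\Z^{n-1}|=C_n$.

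Alternatively, one can derive the same count directly from Theorem~\ref{thm:detlatptsPS}, which specializes with all $a_i=1$ to $\det[\binom{n-i+1}{i-j+1}]_{1\leq i,j\leq n-1}$; this determinant is one of the standard evaluations that yields $C_n$ (e.g.\ by column operations reducing it to the Catalan matrix). The main (minor) obstacle is the slightly awkward bookkeeping of $\om_n$ when the edges $(n,n+1)$ and $(n,n+1)$ coincide; once one interprets $\Pi_n$ as a multigraph (as already done throughout the paper for the PS polytope), everything falls into place and the result reduces to a well-known Catalan enumeration.
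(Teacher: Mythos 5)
Your proof is correct and takes essentially the same approach as the paper: compute $\om_i=1$ for $\Pi_n$, invoke the cells-to-lattice-points correspondence with $\PS(\om_n,\ldots,\om_2)=\PS(1^{n-1})$, and conclude with a Catalan bijection. The only cosmetic difference is that the paper routes through Corollary~\ref{cor:numtypescells} (plane partitions of staircase shape with parts at most $2$) and reads off Dyck paths from the interface between the $1$s and $2$s, while you decompose lattice points of $\PS(1^{n-1})$ into ballot sequences directly; both are standard manifestations of the same enumeration.
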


\begin{proof}
For the graph $\Pi_n$ we have that $\om_i=1$ so by Corollary~\ref{cor:numtypescells}, the number of types of cells of the canonical subdivision equals the number of plane partitions of shape $(1,2,\ldots,n-1)$ with largest part at most 2. These plane partitions are easily seen to be in bijection with Dyck paths of size $n$ (consider the interface between $1$s and $2$s in such a plane partition). 
\end{proof}

\begin{corollary}
The number $t_n$ of types of cells  of the canonical subdivision of the Baldoni--Vergne polytope $\F_{k_{n+1}}(\a)$ equals the number of plane partitions of shape $(\binom{2}{2},\binom{3}{2},\ldots,\binom{n-1}{2})$ with largest part at most $2$.The number $t_n$  is given by the determinant
\begin{equation} \label{eq:numtypescellsBV}
t_n = \det \left[ \binom{\binom{n-i}{2} + 1}{i-j+1} \right]_{1\leq i,j\leq n-1}.
\end{equation}
\end{corollary}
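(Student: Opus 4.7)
The plan is to apply the two general results just established, Corollary~\ref{cor:numtypescells} together with Theorem~\ref{cor:corrlatptsPS}, to the special case $G = k_{n+1}$. The only computation required is the vector of reduced outdegrees $\omega_i := \out_i - 1$ for the complete graph, and then the two formulations (plane partitions and determinant) fall out immediately.

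First I would record that in $k_{n+1}$ on vertex set $[n+1]$, vertex $i$ has outgoing edges to each of $i+1, i+2, \ldots, n+1$, so $\out_i = n+1-i$ and hence $\om_i = n-i$. In particular, for $1\leq i \leq n-1$,
\[
\om_{i+1} + \om_{i+2} + \cdots + \om_n = (n-i-1) + (n-i-2) + \cdots + 0 = \binom{n-i}{2}.
\]
Substituting this identity directly into the determinant in Corollary~\ref{cor:numtypescells} yields the formula \eqref{eq:numtypescellsBV} for $t_n$.

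For the plane partition interpretation, I would invoke Theorem~\ref{cor:corrlatptsPS}, which puts the types of cells of the canonical subdivision of $\F_{k_{n+1}}(\a)$ in bijection with the lattice points of $\PS(\om_n,\om_{n-1},\ldots,\om_2)$, and then apply Theorem~\ref{thm:detlatptsPS}, which identifies those lattice points with plane partitions of shape
\[
\bigl(\om_n,\, \om_n+\om_{n-1},\, \ldots,\, \om_n+\om_{n-1}+\cdots+\om_2\bigr)
\]
having largest parts at most $2$. For $k_{n+1}$ the parameters read $\om_n = 0,\, \om_{n-1} = 1,\, \ldots,\, \om_2 = n-2$, so the $k$-th partial sum is $0 + 1 + \cdots + (k-1) = \binom{k}{2}$. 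The shape therefore becomes $(\binom{1}{2}, \binom{2}{2}, \ldots, \binom{n-1}{2})$, and since $\binom{1}{2} = 0$ is an empty row it can be dropped, leaving the claimed shape $(\binom{2}{2}, \binom{3}{2},\ldots,\binom{n-1}{2})$.

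I do not anticipate any real obstacle: the corollary is a direct specialization of results already proved, and the only substantive step is the triangular-number identity $\sum_{j=0}^{n-i-1} j = \binom{n-i}{2}$. The one thing to handle carefully is the bookkeeping of the indexing in the partial sums defining the plane partition shape and in the $(n-1) \times (n-1)$ matrix of Corollary~\ref{cor:numtypescells}, but once $\om_i = n-i$ is in hand both computations are routine.
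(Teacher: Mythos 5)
Your proposal is correct and matches the paper's own argument: the paper's proof is exactly the one-line observation that $\om_i = n-i$ for $k_{n+1}$ followed by a direct application of Corollary~\ref{cor:numtypescells}. You spell out the bookkeeping (the partial-sum identity $\om_{i+1}+\cdots+\om_n = \binom{n-i}{2}$ and the plane-partition shape via Theorems~\ref{cor:corrlatptsPS} and~\ref{thm:detlatptsPS}) a bit more explicitly than the paper, but the route is the same.
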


\begin{proof}
This is a direct application of Corollary~\ref{cor:numtypescells}. For the complete graph $k_{n+1}$ we have that $\om_i = n-i$.  
\end{proof}

\begin{example} \label{ex:numtypecells}
The subdivision of $\F_{k_4}(1,1,1,-3)$ illustrated in Figure~\ref{fig:subdivBVpoly} has $t_3=2$ types of cells. The subdivision of $\F_{k_5}(1,1,1,1,-4)$ has $t_4=7$ types of cells as can be calculated via the determinant in \eqref{eq:numtypescellsBV}. For the terms of the sequence $(t_n)_{n\geq 0}$ see \cite[\href{http://oeis.org/A107877}{A107877}]{OEIS}.
\end{example}

\subsection{Number of cells in the canonical subdivision}

Given a graph $G$ on the vertex set $[n+1]$, let $G^\star$ and $G^{\circ}$ be the graphs
obtained from $G$ by adding a vertex $0$ adjacent to vertices $1,2,\ldots,n$ and adjacent to vertices $1,2,\ldots,n+1$ respectively.

\begin{theorem} \label{thm:numcells}
The following numbers are all equal:
\begin{itemize}
\item[(a)] the number of cells of the canonical subdivision of
$\F_G(a_1,a_2,\ldots,a_n,-\sum a_i)$,
\item[(b)] the sum 
\begin{equation} \label{eq:numpiecesLidskii}
\sum_{{\bf j}} K_{G}(j_1-\om_1,\ldots,j_n-\om_n,0),
\end{equation}
over compositions ${\bf j} = (j_1,\ldots,j_n)$ of
$m-n$ that are $\geq (\om_1,\ldots,\om_n)$ in dominance order,
\item[(c)] the number of lattice
points of the polytope $\F_{G^{\star}}(n-m,-\om_1,\ldots,-\om_n,0)$, 
\item[(d)] the volume of the polytope $\F_{G^{\star}}(1,0,\ldots,0,-1)$,
\item[(e)] the volume of the polytope $\F_{G^{\circ}}(1,0,\ldots,0,-1)$.
\end{itemize}
\end{theorem}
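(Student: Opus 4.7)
The plan is to prove the chain $(a)=(b)=(c)=(d)=(e)$, working with the netflow $(m-n,-\om_1,\ldots,-\om_n,0)$ in (c), which is the choice of first coordinate forced by the requirement that a netflow vector sum to zero. The equality $(a)=(b)$ is essentially recorded in the proof of \eqref{eq:vol}: by Theorem~\ref{thm:leaves-dominance-order} the full-dimensional leaves of the CCRT $R_G^{\leftarrow}$ are the graphs $G(\m)$ with $\sum m_i=m$ and $(m_1,\ldots,m_n)\geq(\out_1,\ldots,\out_n)$ in dominance order, and by Lemma~\ref{lemma:ng} each such leaf occurs $K_G(m_1-\out_1,\ldots,m_n-\out_n,0)$ times. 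The substitution $j_i=m_i-1$ then turns this into the expression in \eqref{eq:numpiecesLidskii}.

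For $(b)=(c)$, I decompose a flow on $G^\star$ with netflow $(m-n,-\om_1,\ldots,-\om_n,0)$ by recording the values $j_i:=f(0,i)\geq 0$ on the new edges. The constraint at vertex $0$ gives $\sum j_i=m-n$, and the restriction to $E(G)$ yields an ordinary flow on $G$ whose netflow at vertex $i$ is $j_i-\om_i$ for $i=1,\ldots,n$ and $0$ at vertex $n+1$. Hence
\[
K_{G^\star}(m-n,-\om_1,\ldots,-\om_n,0)=\sum_{{\bf j}} K_G(j_1-\om_1,\ldots,j_n-\om_n,0),
\]
the sum ranging over weak compositions ${\bf j}$ of $m-n$ into $n$ parts. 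Since $K_G$ vanishes unless the partial sums $\sum_{i\leq k}(j_i-\om_i)$ are all non-negative, i.e.\ unless ${\bf j}\geq(\om_1,\ldots,\om_n)$ in dominance order, only those summands contribute, matching (b).

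For $(c)=(d)$ and $(d)=(e)$ I apply Corollary~\ref{cor:vol10-1case} to $G^\star$ and $G^\circ$, both of which have $n+2$ vertices. In $G^\star$ the added vertex $0$ has outdegree $n$ and the other vertices retain their outdegrees from $G$; plugging these into \eqref{eq:vol10-1caseOut} yields $\vol \F_{G^\star}(1,0,\ldots,0,-1)=K_{G^\star}(m-n,-\om_1,\ldots,-\om_n,0)$, which is (c). In $G^\circ$ the added vertex $0$ has outdegree $n+1$, and the same corollary yields $\vol \F_{G^\circ}(1,0,\ldots,0,-1)=K_{G^\circ}(m-n,-\om_1,\ldots,-\om_n,0)$. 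Finally, in any flow on $G^\circ$ with netflow $0$ at the sink $n+1$, the total inflow there must vanish, so in particular $f(0,n+1)=0$; therefore such flows biject with flows on $G^\star$ with the same netflow, giving $K_{G^\circ}=K_{G^\star}$ and $(d)=(e)$.

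The main obstacle is the last equality, since $\F_{G^\star}$ and $\F_{G^\circ}$ have different dimensions ($m-1$ and $m$), so one cannot compare their volumes directly in a common ambient space. The Lidskii formula circumvents this: applied to $G^\circ$ it collapses to a single summand at ${\bf j}=(m,0,\ldots,0)$ due to the $0^{j_i}$ factors in \eqref{eq:vol}, reducing the equality to a Kostant-partition-function identity that holds because the edge $(0,n+1)$ of $G^\circ$ carries zero flow in all counted configurations.
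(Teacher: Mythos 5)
Your proof is correct and follows essentially the same chain $(a)=(b)=(c)=(d)=(e)$ as the paper, using the CCRT (Lemma~\ref{lemma:ng} and Theorem~\ref{thm:leaves-dominance-order}) for $(a)=(b)$, the edge decomposition at vertex $0$ for $(b)=(c)$, Corollary~\ref{cor:vol10-1case} for $(c)=(d)$ and $(d)=(e)$, and the zero-flow observation on edges into $n+1$ to identify $K_{G^\star}$ with $K_{G^\circ}$. You have also correctly noted that the first coordinate in (c) must be $m-n$ rather than the $n-m$ printed in the theorem statement, as forced by the netflow summing to zero, and you have cleanly separated the roles of $G^\star$ and $G^\circ$ where the paper's proof conflates them.
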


\begin{proof}
From the subdivision in the proof of Theorem~\ref{thmA} for $\F_G(\a)$
the number $P$ of full-dimensional cells of the subdivision is the sum given in \eqref{eq:numpiecesLidskii}. This proves the equivalence of (a) and (b).

Next we show the equality between (b) and (c). Each term in the sum in \eqref{eq:numpiecesLidskii} counts the number of integral flows on $G$ with netflow $(j_1-\om_1,\ldots,j_n-\om_n,0)$. Each such flow corresponds to an integral flow on $G^{\circ}$ with
netflow $(n-m,-\om_1,\ldots,-\om_n)$ by assigning a flow of $j_i$ to
edge $(0,i)$ for $i=1,2,\ldots,n$. Conversely, given an integral flow in $G^{\circ}$ with netflow
$(n-m,-\om_1,\ldots,-\om_n,0)$, if $j_i$ is the netflow on edge
$(0,i)$ then the integral flows on the edges of the subgraph $G$ yields an integral flow on $G$
with netflow $(j_1-\om_1,\ldots,j_n-\om_n,0)$. Thus   
\[
P = K_{G^{\star}}(n-m,-\om_1,\ldots,-\om_n,0).    
\]
This proves the equivalence of (b) and (c).

Next, the numbers in (c) and (d) are equal since   \eqref{eq:vol10-1caseOut} applied to $\F_{G^\circ}(1,0,\ldots,0,-1)$  yields
\[
\vol \F_{G^\star}(1,0,\ldots,0,-1) =K_{G^{\star}}(n-m,-\om_1,\ldots,-\om_n,0).
\]

Finally, we show the equality between the numbers in  (d) and (e) by
combining \eqref{eq:vol10-1caseOut} with the observation that 
\[
K_{G^{\star}}(n-m,-\om_1,\ldots,-\om_n,0) = K_{G^{\circ}}(n-m,-\om_1,\ldots,-\om_n,0),
\]
where $\om_i = \om_i(G^{\star}) = \om_i(G^{\circ})$ for $i=1,\ldots,n$.
\end{proof}

\begin{remark}
In Section~\ref{sec:cayleytrick} we give a second proof of the equality between (a) and (d) in 
Theorem~\ref{thm:numcells} using the {\em Cayley trick}
 \cite{HRS, Sturmfels}.  
\end{remark}

\begin{corollary}[{\cite[Thm. 1]{PS}}] \label{cor:numcellsPS}
The number of cells of the canonical subdivision of the Pitman--Stanley polytope $\F_{\Pi_n}(\a)$ is $C_n$.
\end{corollary}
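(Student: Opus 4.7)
The plan is to apply Theorem~\ref{thm:numcells}(b) with $G=\Pi_n$. Since every vertex $i \in [n]$ of $\Pi_n$ has outdegree $2$ (via the edges $(i,i+1)$ and $(i,n+1)$), we have $\om_i = 1$ for $i=1,\ldots,n$, and the graph has $m = 2n$ edges. Thus the number of cells of the canonical subdivision of $\F_{\Pi_n}(\a)$ equals
\[
\sum_{\bf j} K_{\Pi_n}(j_1-1,\,j_2-1,\ldots,j_n-1,\,0),
\]
where the sum runs over weak compositions ${\bf j}=(j_1,\ldots,j_n)$ of $n$ satisfying $j_1+\cdots+j_k \geq k$ for every $k=1,\ldots,n$.

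The heart of the argument is to show that each summand equals $1$. Fix such a composition ${\bf j}$ and set $\a' = (j_1-1,\ldots,j_n-1,0)$, which has coordinates summing to $0$. Denote by $x_i$ the flow on the edge $(i,i+1)$ and by $y_i$ the flow on the edge $(i,n+1)$ in an integer $\a'$-flow on $\Pi_n$. The conservation law at vertex $n+1$ reads $x_n + y_1 + \cdots + y_n = 0$, and since all these values are nonnegative integers we must have $x_n = y_1 = \cdots = y_n = 0$. The conservation law at each vertex $i \leq n$ then forces $x_i = (j_1-1)+(j_2-1)+\cdots+(j_i-1)$, which is nonnegative precisely because ${\bf j}$ satisfies the dominance condition $j_1+\cdots+j_i \geq i$. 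Hence there is exactly one integer $\a'$-flow on $\Pi_n$, giving $K_{\Pi_n}(j_1-1,\ldots,j_n-1,0)=1$.

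Consequently, the number of cells equals the number of weak compositions of $n$ into $n$ parts with all initial partial sums at least $k$, which is well known to equal $C_n$ (for example by the cycle lemma, or by the bijection with Dyck paths used in Corollary~\ref{cor:numtypesPS}). The main obstacle is simply verifying the uniqueness of the $\a'$-flow, but this is immediate once one exploits that vertex $n+1$ is incident only to nonnegative-flow edges summing to $0$, which collapses all flow onto the path $1 \to 2 \to \cdots \to n$. Note also that, combined with Corollary~\ref{cor:numtypesPS}, this shows that for the Pitman--Stanley polytope each type of cell appears exactly once in the canonical subdivision.
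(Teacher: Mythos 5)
Your argument is essentially the paper's own proof: both apply Theorem~\ref{thm:numcells}(b) to reduce the count to $\sum_{\bf j} K_{\Pi_n}(j_1-1,\ldots,j_n-1,0)$, observe that the zero netflow at vertex $n+1$ forces all flow onto the path $1\to 2\to\cdots\to n$ so each term equals $1$, and then count the $C_n$ admissible compositions. You spell out the flow-uniqueness step in a bit more detail, but the structure and key observation are identical.
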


\begin{proof}
By   in Theorem~\ref{thm:numcells} (a)$=$(b) the number of cells of the canonical subdivision of $\F_{\Pi_n}(\a)$ equals the sum
\[
P =  \sum_{\bf j} K_{\Pi_n}(j_1-1,\ldots,j_n-1,0).
\]
By Corollary~\ref{cor:numtypesPS} the sum on the RHS above has $C_n$ compositions {\bf j} with nonzero contribution. Each Kostant partition function in the sum has zero netflow on vertex $n+1$. Thus each such term counts integral flows on the path $1\to 2 \to \cdots \to n$. There is exactly one such integral flow, so $K_{\Pi_n}(j_1-1,\ldots,j_n-1,0)=1$ for each of the $C_n$ many compositions ${\bf j} \geq (1,\ldots,1)$.
\end{proof}

\begin{corollary} \label{thm:numcellssubdiv}
The number of cells of the canonical subdivision of $\F_{k_{n+1}}({\bf
  a})$  for $\a \in \mathbb{Z}^n_{>0}$ is $C_0C_1C_2\cdots C_{n-1}$.
\end{corollary}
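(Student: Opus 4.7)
The plan is to reduce the claim directly to the Chan--Robbins--Yuen volume formula \eqref{eq:volCRY} via part (e) of Theorem \ref{thm:numcells}. First, specialize the graph $G=k_{n+1}$ in Theorem \ref{thm:numcells}(e), which states that for any graph $G$ on $[n+1]$ and any $\a\in\mathbb{Z}^n_{>0}$, the number of cells of the canonical subdivision of $\F_G(\a)$ equals $\vol \F_{G^\circ}(1,0,\ldots,0,-1)$, where $G^\circ$ is obtained from $G$ by adjoining a new vertex $0$ adjacent to every vertex of $G$.

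Next, I would identify $(k_{n+1})^\circ$ with $k_{n+2}$. Indeed, $G^\circ$ has vertex set $\{0,1,\ldots,n+1\}$ and, since $G=k_{n+1}$ already contains every edge among $\{1,\ldots,n+1\}$, adjoining $0$ as a vertex adjacent to all of $1,2,\ldots,n+1$ produces the complete graph on $\{0,1,\ldots,n+1\}$. Relabeling the vertices via $i\mapsto i+1$ turns this into $k_{n+2}$ on the vertex set $[n+2]$, and under this relabeling the netflow vector $(1,0,\ldots,0,-1)$ (with the $1$ at vertex $0$ and the $-1$ at vertex $n+1$) becomes the netflow vector $(1,0,\ldots,0,-1)$ on $[n+2]$. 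Hence the two polytopes are integrally equivalent and so have the same (normalized) volume.

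Finally, apply the Chan--Robbins--Yuen formula \eqref{eq:volCRY}, which computes
\[
\vol \F_{k_{n+2}}(1,0,\ldots,0,-1) = C_0 C_1 \cdots C_{n-1}.
\]
Chaining these three equalities yields the desired identity for the number of cells of the canonical subdivision of $\F_{k_{n+1}}(\a)$. There is essentially no obstacle here once Theorem \ref{thm:numcells} is in hand; the only points requiring a line of justification are the identification $(k_{n+1})^\circ\equiv k_{n+2}$ (which only needs care with vertex labeling) and the mild observation that the count of cells is independent of the particular choice of strictly positive $\a$, which is built into Theorem \ref{thm:numcells}(a)--(b) since the sum \eqref{eq:numpiecesLidskii} depends only on $G$.
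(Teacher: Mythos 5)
Your argument is correct and matches the paper's proof: both reduce the cell count via Theorem~\ref{thm:numcells} to $\vol\F_{k_{n+2}}(1,0,\ldots,0,-1)$ and then invoke the Chan--Robbins--Yuen formula~\eqref{eq:volCRY}. The paper cites part (d) rather than (e) while stating $G^\circ=k_{n+2}$, but since (d) and (e) coincide by that same theorem this is immaterial, and your version spells out the identification $(k_{n+1})^\circ\equiv k_{n+2}$ slightly more carefully.
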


\begin{proof}
For $G=k_{n+1}$ we have that $G^{\circ}=k_{n+2}$. Then by Theorem~\ref{thm:numcells} (a)$=$(d), the desired number of cells equals the volume of the CRY
polytope of size $n+1$. The result then follows by \eqref{eq:volCRY}.
\end{proof}

\begin{example}
Continuing with Example~\ref{ex:numtypecells}, the subdivision of $\F_{k_4}(1,1,1,-3)$ illustrated in Figure~\ref{fig:subdivBVpoly} has $C_1C_2=2$ cells ($t_3=2$ types of cells, each appearing once). The subdivision of $\F_{k_5}(1,1,1,1,-4)$ has $C_1C_2C_3=10$ cells of $t_4=7$ different types. 
\end{example}

\subsection{Number of words in the Lidskii volume formula} 

If we  take the Lidskii formula for the  volume of $\F_{G}({\bf
  a})$ and we look at it as a sum of {\em words} $w=w_1w_2\cdots w_n$
in the alphabet $a_1,a_2,\ldots$
(the order of letters matters), then \eqref{eq:vol} becomes
\begin{equation} \label{eq:words}
\vol(F_{G}({\bf a})) = \sum_w m(w) \cdot w_1 w_2 \cdots w_n.
\end{equation}
where $m(w)$ is the multiplicity of the word $w$. See Example \ref{??} below.  From the Lidskii
formula \eqref{eq:vol} the multiplicity is given by a Kostant
partition function
\[
m(w) = K_G(j_1-\out_1,\ldots,j_n-\out_n,0),
\]
where $j_k$ is the number of instances of the letter $a_k$ in $w$. The following
proposition gives the number of such words with multiplicity as a
volume of another flow polytope.

\begin{proposition}
For the flow polytope $\F_G(\a)$ and the words $w$ as defined above we have that 
\[
\sum_w m(w) = \vol \F_G(1,\ldots,1,-n).
\]
\end{proposition}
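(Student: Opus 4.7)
The plan is to derive the identity as a direct specialization of the Lidskii volume formula \eqref{eq:vol}, combined with the observation that expanding a monomial $a_1^{j_1}\cdots a_n^{j_n}$ as a sum of words of length $m-n$ accounts precisely for the multinomial coefficient $\binom{m-n}{j_1,\ldots,j_n}$.

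First I would rewrite the Lidskii volume formula as a sum over words. For a composition ${\bf j} = (j_1,\ldots,j_n)$ of $m-n$, the number of words $w = w_1 w_2 \cdots w_{m-n}$ in the alphabet $\{a_1,\ldots,a_n\}$ in which the letter $a_k$ appears exactly $j_k$ times is the multinomial coefficient $\binom{m-n}{j_1,\ldots,j_n}$, and each such word carries the monomial $w_1 w_2 \cdots w_{m-n} = a_1^{j_1}\cdots a_n^{j_n}$. Grouping the terms of \eqref{eq:vol} by words rather than monomials therefore produces exactly the word expansion \eqref{eq:words}, with multiplicity $m(w) = K_G(j_1-\om_1,\ldots,j_n-\om_n,0)$ depending only on the letter counts of $w$.

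Next I would specialize $a_1 = a_2 = \cdots = a_n = 1$ on both sides of \eqref{eq:words}. On the left side we obtain $\vol \F_G(1,\ldots,1,-n)$ directly from \eqref{eq:vol}, since $a_1^{j_1}\cdots a_n^{j_n} = 1$ makes the Lidskii sum collapse to
\begin{equation*}
\vol \F_G(1,\ldots,1,-n) = \sum_{{\bf j}} \binom{m-n}{j_1,\ldots,j_n} K_G(j_1-\om_1,\ldots,j_n-\om_n,0).
\end{equation*}
On the right-hand side, the product $w_1 w_2 \cdots w_{m-n}$ evaluates to $1$ for every word $w$, so \eqref{eq:words} becomes $\sum_w m(w)$. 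Comparing the two specializations yields the claimed identity.

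There is no serious obstacle here: the argument is a one-line consequence of \eqref{eq:vol} once the bookkeeping between monomials and words is made explicit. The only mild point to be careful about is ensuring that the sum over words in \eqref{eq:words} ranges over all words $w_1 \cdots w_{m-n}$ of length $m-n$ whose letter counts ${\bf j} = (j_1,\ldots,j_n)$ satisfy the dominance condition ${\bf j} \geq (\om_1,\ldots,\om_n)$, so that words not arising in \eqref{eq:vol} contribute multiplicity zero via $K_G(j_1-\om_1,\ldots,j_n-\om_n,0)=0$; this matches the convention already used in the word expansion.
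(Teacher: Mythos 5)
Your proof is correct and follows essentially the same route as the paper: the paper's proof is the one-line observation ``to count the words with multiplicity it suffices to evaluate $a_i=1$ in \eqref{eq:vol},'' and your argument simply spells out the monomial-to-word bookkeeping that the paper leaves implicit before performing the same specialization.
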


\begin{proof}
To count the words with multiplicity it suffices to evaluate $a_i=1$
in \eqref{eq:vol}. 
\end{proof}

For the Pitman--Stanley polytope the multiplicity of each words $w$ in
\eqref{eq:words} is  $m(w) =K_{\Pi_n}(j_1-1,\ldots,j_n-1,0)$. This value
of the Kostant partition function equals $1$ as explained in the proof
of Corollary~\ref{cor:numcellsPS}. Moreover, the words appearing in
the formula are {\em parking functions} as shown in \cite{PS}.

\begin{corollary}[{\cite[Thm. 11]{PS}}] \label{cor:charwordsPS}
For the Pitman--Stanley polytope $\F_{\Pi_n}(\a)$ we have that 
\[
\vol \F_{\Pi_n}(\a) = \sum_{(k_1,\ldots,k_n)} a_{k_1}a_{k_2}\cdots a_{k_n},
\]
where the sum is over parking functions $(k_1,\ldots,k_n)$. Thus the number of words in the Lidskii volume formula is $(n+1)^{n-1}$.
\end{corollary}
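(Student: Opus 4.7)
The plan is to apply the Lidskii volume formula \eqref{eq:vol} directly to $\F_{\Pi_n}(\a)$ and then reinterpret the resulting sum combinatorially as a sum over words. For the graph $\Pi_n$ each vertex $i\in\{1,\ldots,n\}$ has two outgoing edges (namely $(i,i+1)$ and $(i,n+1)$), so $\out_i=2$, $\om_i=1$, the edge count is $m=2n$, and $m-n=n$. Thus \eqref{eq:vol} specializes to
\[
\vol \F_{\Pi_n}(\a) = \sum_{{\bf j}} \binom{n}{j_1,\ldots,j_n}\, a_1^{j_1}\cdots a_n^{j_n}\cdot K_{\Pi_n}(j_1-1,\ldots,j_n-1,0),
\]
where the sum ranges over weak compositions ${\bf j}=(j_1,\ldots,j_n)$ of $n$ with $j_1+\cdots+j_k\geq k$ for every $k=1,\ldots,n$.

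The next step is to dispose of the Kostant partition function factor. Exactly as in the proof of Corollary~\ref{cor:numcellsPS}, each such factor evaluates the number of integer flows on $\Pi_n$ whose netflow at vertex $n+1$ is zero; these flows live on the path $1\to 2\to\cdots\to n$ with prescribed differences, so the Kostant partition function equals $1$ for every admissible ${\bf j}$. Consequently, the volume reduces to $\sum_{{\bf j}} \binom{n}{j_1,\ldots,j_n} a_1^{j_1}\cdots a_n^{j_n}$.

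Now I expand the multinomial coefficient as a sum over words: $\binom{n}{j_1,\ldots,j_n} a_1^{j_1}\cdots a_n^{j_n}$ is the sum of $a_{w_1}a_{w_2}\cdots a_{w_n}$ over all words $w=w_1w_2\cdots w_n\in\{1,\ldots,n\}^n$ whose letter $i$ appears exactly $j_i$ times. The dominance condition $j_1+\cdots+j_k\geq k$ for all $k$ on the multiplicity vector $(j_1,\ldots,j_n)$ is precisely the standard characterization of parking functions: a sequence $(k_1,\ldots,k_n)\in\{1,\ldots,n\}^n$ is a parking function if and only if, letting $j_i=\#\{t:k_t=i\}$, one has $j_1+\cdots+j_k\geq k$ for every $k$. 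Hence summing over admissible ${\bf j}$ and then over words with that letter count is the same as summing over all parking functions, yielding $\vol \F_{\Pi_n}(\a)=\sum_{(k_1,\ldots,k_n)} a_{k_1}\cdots a_{k_n}$. Finally, the count of words follows from the classical enumeration of parking functions of length $n$ as $(n+1)^{n-1}$.

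No step presents a genuine obstacle here: the work is essentially unpacking the Lidskii formula in the very simple case where all $\om_i=1$, and then recognizing the dominance condition as the parking function condition. The only subtlety worth stating carefully is the translation between the multiplicity vector of a word and the dominance condition, but this is a routine reformulation.
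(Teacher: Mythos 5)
Your proof is correct and follows essentially the same route as the paper: specialize the Lidskii volume formula \eqref{eq:vol} to $\Pi_n$ (where $\om_i=1$ and $m-n=n$), use the argument of Corollary~\ref{cor:numcellsPS} to see that each Kostant factor equals $1$, expand the multinomial as a sum over words, and recognize the dominance condition $j_1+\cdots+j_k\geq k$ on the letter-multiplicity vector as the parking-function condition. The paper states the parking-function identification somewhat tersely with a citation to \cite{PS}, whereas you spell it out; otherwise the arguments coincide.
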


\begin{corollary}
For the flow polytope $\F_{k_{n+1}}(\a)$, the number of words with multiplicity in the Lidskii volume formula equals
\[
\sum_w m(w) = f^{(n-1,n-2,\ldots,1)}\cdot C_1C_2\cdots C_{n-1}.
\]
\end{corollary}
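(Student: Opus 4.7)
The plan is to recognize that this corollary is a direct specialization of the preceding proposition to the complete graph $G = k_{n+1}$, together with the known formula \eqref{eq:volTesler} for the volume of the Tesler polytope.

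First I would invoke the preceding proposition, which asserts that for any admissible graph $G$,
\[
\sum_w m(w) \;=\; \vol \F_G(1,1,\ldots,1,-n).
\]
The proposition follows by setting $a_1=\cdots=a_n=1$ in the word expansion \eqref{eq:words} of the Lidskii volume formula \eqref{eq:vol}, which collapses the monomial $w_1 w_2 \cdots w_n$ to $1$ and leaves only the multiplicities summed up. Substituting $G=k_{n+1}$ identifies the right-hand side as the volume of the Tesler polytope $\F_{k_{n+1}}(1,1,\ldots,1,-n)$ discussed in example (b) of the introduction.

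Second, I would quote the evaluation \eqref{eq:volTesler} of M\'esz\'aros--Morales--Rhoades:
\[
\vol \F_{k_{n+1}}(1,1,\ldots,1,-n) \;=\; f^{(n-1,n-2,\ldots,1)} \cdot C_0 C_1 \cdots C_{n-1},
\]
where $f^{(n-1,n-2,\ldots,1)}$ is the number of standard Young tableaux of staircase shape. Combining this with the identity from the proposition and using $C_0 = 1$ yields the desired equality
\[
\sum_w m(w) \;=\; f^{(n-1,n-2,\ldots,1)} \cdot C_1 C_2 \cdots C_{n-1}.
\]

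There is essentially no obstacle here beyond a bookkeeping check that the index conventions match: the Tesler polytope is precisely $\F_{k_{n+1}}(\mathbf{1}, -n)$, whose volume specializes \eqref{eq:vol} at $\a = (1,\ldots,1,-n)$ to a sum of multinomial coefficients weighted by Kostant partition function values; this is exactly the sum of multiplicities $\sum_w m(w)$. Thus the corollary reduces to citing one prior proposition and one prior evaluation of a volume.
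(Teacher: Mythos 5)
Your proposal matches the paper's proof exactly: invoke the preceding proposition to identify $\sum_w m(w)$ with $\vol \F_{k_{n+1}}(1,\ldots,1,-n)$, i.e., the volume of the Tesler polytope, and then quote the product formula \eqref{eq:volTesler}, noting $C_0=1$. No differences to report.
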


\begin{proof}
This number of words is exactly the volume
of the Tesler polytope $\vol\F_{k_{r+1}}({\bf 1})$ given in \eqref{eq:volTesler}.
\end{proof}

\begin{example} \label{??}
For the graph $G=k_4$, omitting from the notation the netflow on the last vertex, we have that 
\[
\vol \F_{k_4}(a_1,a_2,a_3) = \binom{3}{3,0,0} a_1^3\cdot
K_{k_4}(1,-1,0)+\binom{3}{2,1,0} a_1^2a_2 \cdot K_{k_4}(0,0,0),
\]
and the polytope subdivides into $K_{k_4}(1,-1,0)+ K_{k_4}(0,0,0)=2$ cells. In
terms of words: 
\[
\vol \F_{k_4}(a_1,a_2,a_3)  = a_1a_1a_1\cdot K_{k_4}(1,-1,0) +
(a_1a_1a_2+a_1a_2a_1+a_2a_1a_1)\cdot K_{k_4}(0,0,0), 
\]
i.e. the volume formula is given in terms of four words. 
\end{example}

\begin{remark}
It is natural to ask for a characterization of the words that appear in \eqref{eq:words}. In the
Pitman-Stanley polytope the equivalent words are parking functions
(see Corollary~\ref{cor:charwordsPS}). See \cite{AIM} for a recent characterization. 
\end{remark}

\subsection{Flow polytope with volume counted by lattice points of Pitman--Stanley polytope} \label{sec:genPS}

Given ${\bf c} := (c_1,c_2,\ldots,c_n)$ for nonnegative integers $c_i$, let $\Pi_n({\bf c})$ be the graph with vertices $[n+1]$ consisting of the path $1\to 2 \to \cdots \to n+1$ and $c_i$ multiple edges of the form $(i,n+1)$. Recall that $\Pi_n({\bf c})^{\star}$ denotes the graph $\Pi_n({\bf c})$ with an additional vertex $0$ adjacent to vertices $1,2,\ldots,n$. See Figure~\ref{fig:pincstar}. At $c_1=\ldots=c_n=1$, the graph $\Pi_n(1,\ldots,1)$ equals the graph $\Pi_n$.

\begin{figure}
    \centering
    \includegraphics{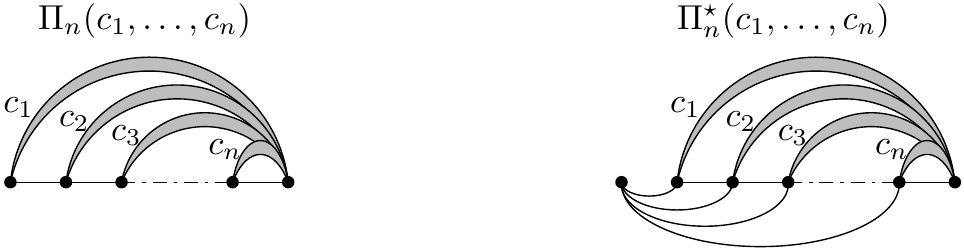}
    \caption{Example of graphs $\Pi_n({\bf c})$ and $\Pi_n({\bf
        c})^{\star}$. The gray edges with label $c_i$ indicate $c_i$
      multiple edges.}
    \label{fig:pincstar}
\end{figure}

\begin{corollary}
Let $\a = (a_1,\ldots,a_n, -\sum_i
a_i)$  and ${\bf c} = (c_1,\ldots,c_n)$ be tuples of nonnegative
integers $a_i$ and $c_i$. Then the volume and lattice points of the flow polytope
$\F_{\Pi_n({\bf c})}({\bf a})$ equal 
\begin{align}
\vol \F_{\Pi_n({\bf c})}({\bf a})  &= \sum_{\bf j} \binom{c_1+\cdots +
                                     c_n}{j_1,\ldots,j_n}
                                     a_1^{j_1}\cdots a_n^{j_n}, \label{eq:volgenPS}\\
K_{\Pi_n({\bf c})}({\bf a}) &= \sum_{\bf j} \binom{a_1+c_1}{j_1}\cdots
                              \binom{a_n+c_n}{j_n}, \label{eq:kostgenPS}\\
&= \sum_{\bf j} \multiset{a_1+1}{j_1}\multiset{a_2}{j_2}\cdots
\multiset{a_{n}}{j_{n}}, \label{eq:kostmultisetgenPS}
\end{align}
where the three sums are over weak compositions ${\bf j}=(j_1,\ldots,j_n)$ of $\sum_i c_i$ that are $\geq (c_1,\ldots,c_n)$ in dominance order.
\end{corollary}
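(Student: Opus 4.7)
The plan is to apply Theorem \ref{thmA} directly to the graph $\Pi_n({\bf c})$; all three identities will fall out once we compute the relevant degree statistics and evaluate a single, very simple, Kostant partition function.

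First I would record the degree data. The edges of $\Pi_n({\bf c})$ are the path edges $(i,i+1)$ for $i=1,\ldots,n$ together with $c_i$ copies of $(i,n+1)$, so $\out_i = 1 + c_i$ and $\om_i = c_i$ for every $i$, while $\ind_1 = 0$ and $\ind_i = 1$ for $2 \leq i \leq n$, giving $\i_1 = -1$ and $\i_i = 0$ for $i \geq 2$. The total edge count is $m = n + \sum_i c_i$, so $m - n = \sum_i c_i$. Next I would establish the key evaluation
\[
K_{\Pi_n({\bf c})}(j_1-c_1,\ldots,j_n-c_n,0) \;=\; 1
\]
for every ${\bf j}$ with $\sum_i j_i = \sum_i c_i$ and ${\bf j} \geq (c_1,\ldots,c_n)$ in dominance order. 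Setting $b_i := j_i - c_i$, an integral flow on $\Pi_n({\bf c})$ with netflow $(b_1,\ldots,b_n,0)$ is determined by nonnegative integer values $p_i$ on the path edges $(i,i+1)$ and $g_i^{(k)}$ on the $c_i$ copies of $(i,n+1)$. The netflow equation at vertex $n+1$ reads $p_n + \sum_{i,k} g_i^{(k)} = 0$, and nonnegativity forces all of these summands to vanish. The remaining flow conservation equations then uniquely determine $p_i = \sum_{k \leq i} b_k$, and the requirement $p_i \geq 0$ is exactly the dominance condition on ${\bf j}$, while $p_n = 0$ is the equality $\sum_i j_i = \sum_i c_i$; thus there is exactly one such flow.

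With these two ingredients in hand, the three identities become immediate substitutions. Plugging $m - n = \sum_i c_i$, $\om_i = c_i$, and the value $1$ of the Kostant factor into \eqref{eq:vol} yields \eqref{eq:volgenPS}; plugging the same data into \eqref{eq:kost} yields \eqref{eq:kostgenPS}; and plugging $\i_1 = -1$, $\i_i = 0$ for $i \geq 2$ into \eqref{eq:kostmultiset} gives the multiset coefficients $\multiset{a_1+1}{j_1}\multiset{a_2}{j_2}\cdots\multiset{a_n}{j_n}$ of \eqref{eq:kostmultisetgenPS}. In each case the dominance indexing set matches the one prescribed by Theorem \ref{thmA} for these values of $\om_i$. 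There is essentially no technical obstacle here: the only observation with any content is that vertex $n+1$ carries zero netflow, so in $\Pi_n({\bf c})$ all flow is forced onto the path, making the graph behave, for the purposes of this Kostant value, like the bare path $1 \to 2 \to \cdots \to n+1$.
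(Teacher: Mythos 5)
Your proposal is correct and follows essentially the same route as the paper: substitute the degree data of $\Pi_n({\bf c})$ into Theorem~\ref{thmA} and note that every Kostant factor equals $1$ because zero netflow at vertex $n+1$ forces the flow onto the path $1\to\cdots\to n$. Your direct degree computation ($\out_i = 1+c_i$, so $\om_i = c_i$ and the Kostant argument is $j_i - c_i$) is actually cleaner than the paper's cited proof, which appears to contain a typographical slip writing $\out_i=1$ and $K_{\Pi_n({\bf c})}(j_1-1,\ldots,j_n-1,0)$ where $\om_i=c_i$ and $K_{\Pi_n({\bf c})}(j_1-c_1,\ldots,j_n-c_n,0)$ are intended.
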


\begin{proof}
The result follows by  Theorem~\ref{thmA} for
$G=\Pi_n({\bf c})$ with $\out_i=1$ for $i=1,\ldots,n$,
$\i_1=-1,\i_j=0$ for $j=2,\ldots,n$, and noticing that
\begin{equation} \label{eq:KgenPS}
K_{\Pi_n({\bf c})}(j_1-1,\ldots,j_n-1,0) = K_{\Pi_n}(j_1-1,\ldots,j_n-1,0) =1,
\end{equation}
where the second equality follows from the proof of Corollary~\ref{cor:numcellsPS}.
\end{proof}

\begin{corollary} \label{cor:volflowlattPS}
Let ${\bf c} = (c_1,\ldots,c_n)$ be a tuple of nonnegative integers.
and let $\Pi_n({\bf c})^{\star}$  be the graph defined above, then
\[
\vol \F_{\Pi_n({\bf c})^{\star}}(1,0,\ldots,0,-1) = \#(\PS(c_n,c_{n-1}\ldots,c_2) \,\cap\, \mathbb{Z}^{n-1}) = \det\left[
\binom{c_{i+1}+\cdots +c_{n}+1}{i-j+1}\right]_{1\leq i,j\leq n-1}.
\]
In particular, the volume is independent of $c_1$.
\end{corollary}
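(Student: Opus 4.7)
The plan is to combine Theorem~\ref{thm:numcells} with the observation that every Kostant partition function appearing in the cell count of $\F_{\Pi_n({\bf c})}({\bf a})$ equals $1$, and then appeal to Corollary~\ref{cor:corrlatptsPS} together with Theorem~\ref{thm:detlatptsPS} to get the Pitman--Stanley lattice-point count and the determinant.

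First I would apply the equality (a)$=$(b)$=$(d) of Theorem~\ref{thm:numcells} with $G := \Pi_n({\bf c})$, since then $G^{\star}$ is precisely $\Pi_n({\bf c})^{\star}$. This gives
\[
\vol \F_{\Pi_n({\bf c})^{\star}}(1,0,\ldots,0,-1) \;=\; \sum_{{\bf j}} K_{\Pi_n({\bf c})}(j_1-\om_1,\ldots,j_n-\om_n,0),
\]
where ${\bf j}$ ranges over weak compositions of $m - n$ with ${\bf j} \geq (\om_1,\ldots,\om_n)$ in dominance order. In $\Pi_n({\bf c})$ each vertex $i \in \{1,\ldots,n\}$ has one path edge to $i+1$ and $c_i$ multiple edges to $n+1$, so $\out_i = 1 + c_i$, $\om_i = c_i$, and $m - n = \sum_{i=1}^n c_i$.

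Next I would verify that each summand equals $1$, by the same argument used in the proof of Corollary~\ref{cor:numcellsPS}: the netflow at vertex $n+1$ is $0$ and $\Pi_n({\bf c})$ has no outgoing edges from $n+1$, so every edge entering $n+1$ must carry zero flow, leaving only a uniquely determined flow on the path $1 \to 2 \to \cdots \to n$. Thus the sum simply counts compositions ${\bf j}$ of $\sum c_i$ dominating $(c_1,\ldots,c_n)$.

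By Corollary~\ref{cor:corrlatptsPS} specialized to $\om_k = c_k$, this count equals $\#(\PS(c_n, c_{n-1}, \ldots, c_2) \cap \mathbb{Z}^{n-1})$, and Theorem~\ref{thm:detlatptsPS} (or equivalently Corollary~\ref{cor:numtypescells}) converts this to the claimed determinant. Independence of $c_1$ is then immediate, since no entry $\binom{c_{i+1} + \cdots + c_n + 1}{i - j + 1}$ involves $c_1$. The only step requiring any real care is the verification that $K_{\Pi_n({\bf c})}(j_1 - c_1, \ldots, j_n - c_n, 0) = 1$; with all of the earlier subdivision machinery in place there is no substantive obstacle.
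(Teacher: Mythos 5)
Your proposal is correct and follows essentially the same route as the paper: the authors cite Theorem~\ref{thm:numcells} (b)$=$(d), the observation~\eqref{eq:KgenPS} that the relevant Kostant partition function values equal~$1$ (which you reprove directly, correctly reading $\om_i = c_i$ so that the argument is $j_i - c_i$), Theorem~\ref{cor:corrlatptsPS}, and Corollary~\ref{cor:numtypescells}. No further comparison is needed.
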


\begin{proof}
The result follows by combining Theorems~\ref{thm:numcells} (b)$=$(d),
\eqref{eq:KgenPS},  ~\ref{cor:corrlatptsPS} and Corollary~\ref{cor:numtypescells}.
\end{proof}

\begin{remark}
The following particular case of the previous volume formula gives a
product. When ${\bf c} = (d,\ldots,d,n)$ is a tuple of size $n+1$, by
Corollary~\ref{cor:volflowlattPS} the volume of
$\F_{\Pi_{n+1}(d,\ldots,d,c)^{\star}}(1,0,\ldots,0,-1)$ equals the
number of
lattice points of $\PS(c,d^n)$.  In \cite[Thm. 13]{PS}, this number of
lattice points has a product formula, giving
\[
\vol \F_{\Pi_{n+1}(d,\ldots,d,c)^{\star}}(1,0,\ldots,0,-1) = \frac{1}{n!} (c+1)(c+nd+2)(c+nd+3)\cdots (c+nd+n).
\]
\end{remark}

\begin{remark}
The volume of the polytope $\F_{\Pi_{n+1}({\bf c})}( {\bf 1})$ equals
  the number of {\em generalized parking functions} studied by
  Yan \cite{CY1,CY2}. Also, the polytope $\F_{\Pi_{n+1}({\bf
      1})^{\star}}(1,1,\ldots,1,-n-1)$ appears in \cite{AIM} and is
  called the {\em Caracol polytope}. Its volume equals $C_{n-1}\cdot (n+1)^{n-1}$. 
\end{remark}

We finish our treatment of the flow polytope $\F_{\Pi_{n+1}({\bf c})}(\a)$
by proving that its Ehrhart polynomial has positive 
coefficients. This was known for the Pitman--Stanley polytope
\cite[Eq. (33)]{PS}. For more on positivity of coefficients of Ehrhrat
polynomials see e.g. \cite{CL,Lsurvey}.

\begin{corollary} \label{cor:ehrhartposgenPS}
The Ehrhart polynomial of $\F_{\Pi_{n+1}({\bf c})}(\a)$ has positive
coefficients.
\end{corollary}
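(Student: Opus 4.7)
The plan is to recognize the Ehrhart polynomial of $\F_{\Pi_{n+1}({\bf c})}(\a)$ as $K_{\Pi_{n+1}({\bf c})}(t\a)$ and then apply the multiset Lidskii formula \eqref{eq:kostmultisetgenPS} to exhibit it as a sum of products of linear factors in $t$ that manifestly have nonnegative coefficients. First I would note that $\F_G(\a)$ is cut out by the linear system $M_G \f = \a$, $\f \geq 0$, which is homogeneous in $\a$, so the dilation satisfies $t\,\F_G(\a)=\F_G(t\a)$ for all $t\in\Z_{\geq 0}$. Consequently, the Ehrhart polynomial of $\F_{\Pi_{n+1}({\bf c})}(\a)$ equals $K_{\Pi_{n+1}({\bf c})}(t\a)$, which is a polynomial in $t$ by Proposition \ref{prop:polynomiality4kpf}.

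Next I would substitute $a_i\mapsto ta_i$ in \eqref{eq:kostmultisetgenPS} (the substitution is legitimate since the identity holds as a polynomial identity on nonnegative integer tuples) to obtain
\[
K_{\Pi_{n+1}({\bf c})}(t\a) \;=\; \sum_{\bf j} \multiset{ta_1+1}{j_1}\prod_{i=2}^{n+1}\multiset{ta_i}{j_i},
\]
where ${\bf j}=(j_1,\ldots,j_{n+1})$ ranges over weak compositions of $\sum_i c_i$ that dominate $(c_1,\ldots,c_{n+1})$.

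The key observation is that each multiset binomial factors as
\[
\multiset{ta_1+1}{j_1}=\frac{1}{j_1!}\prod_{k=1}^{j_1}(ta_1+k), \qquad \multiset{ta_i}{j_i}=\frac{1}{j_i!}\prod_{k=0}^{j_i-1}(ta_i+k) \quad (i\geq 2),
\]
so that each linear factor $(ta_i+k)$ with $a_i,k\in\Z_{\geq 0}$ is a polynomial in $t$ with nonnegative coefficients. Products and sums of polynomials with nonnegative coefficients retain nonnegative coefficients, so $K_{\Pi_{n+1}({\bf c})}(t\a)$ has nonnegative coefficients in $t$, as claimed. There is no serious obstacle; the only subtle point is that one must invoke the multiset form \eqref{eq:kostmultisetgenPS} rather than the binomial form \eqref{eq:kostgenPS}, since the latter involves $\binom{ta_i+c_i}{j_i}$ which need not expand with nonnegative coefficients in $t$ when $j_i>c_i$ (a case that does occur for compositions $\bf j$ dominating $\bf c$ coordinatewise only in the first partial sum).
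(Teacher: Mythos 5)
Your proposal is correct and follows exactly the route the paper takes: substitute $t\a$ into the multiset Lidskii formula \eqref{eq:kostmultisetgenPS} and observe that each term is a product of multiset binomials that expand in $t$ with nonnegative coefficients. You supply a more explicit factorization of each $\multiset{ta_i}{j_i}$ into linear factors with nonnegative coefficients than the paper's one-line proof, and your remark that the ordinary binomial form \eqref{eq:kostgenPS} would fail (because $\binom{ta_i+c_i}{j_i}$ can have negative coefficients when $j_i>c_i$) is exactly the subtlety implicitly resolved by the paper's choice of \eqref{eq:kostmultisetgenPS}.
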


\begin{proof}
The result follows by the formula \eqref{eq:kostmultisetgenPS} for the
Ehrhart polynomial $K_{\Pi_{n+1}({\bf c})}(t\cdot \a)$.
\end{proof}

\begin{remark}
The positivity in $t$ of the polynomial $K_{G}(t\cdot \a)$ is not apparent from
either \eqref{eq:kost} or \eqref{eq:kostmultiset}. There are examples
of graphs $G$ where $K_G(t,0,\ldots,0,-t)$ has negative coefficients
in $t$ \cite[Sec. 4.4]{Lsurvey}. However, positivity
in equation \eqref{eq:kostmultiset} holds if $a_j\geq \i_j$ for
$j=1,\ldots,n$ or if the Kostant partition function on the LHS
of these equations is replaced by $1$ and graph $G$ verifies
 (by
Corollary~\ref{cor:ehrhartposgenPS} with $c_i = \om_i\geq 0$).
\end{remark}

\section{The Cayley trick for flow polytopes} \label{sec:cayleytrick}

Corollary~\ref{cor:vol10-1case} and the Lidskii volume formula \eqref{eq:vol} express the volume of flow polytopes in terms of the number of lattice points of several related flow polytopes.  The volumes of root polytopes and integer points of  generalized permutahedra obey a similar relation, as shown in \cite[\S 14]{P} by Postnikov.  Postnikov used the {\em Cayley trick}  \cite{HRS, Sturmfels} to give the volume of root polytopes  in terms of the number of lattice points of generalized permutahedra.  The first author and St. Dizier  proved a relation between volumes of flow polytopes and integer points of generalized permutahedra \cite{MStD}.
In this section we  use the Cayley trick  to give a second proof of Theorem~\ref{thm:numcells}. It would be interesting to use this technique to fully rederive the Lidskii formulas.  

We follow the notation in \cite[\S 14]{P}. Given a polytope $P$, its polytopal subdivisions form a poset by refinement whose minimal elements correspond to triangulations. Given a $d$-dimensional Minkowski sum $Q:= P_1 + \cdots + P_n$, a {\em Minkowski cell} of $Q$  is a polytope $B_1 + \cdots + B_n$ where $B_i$ is a convex hull of a subset of vertices of $P_i$. A {\em mixed subdivision} of $Q$ is a decomposition of $Q$ into Minkowski cells, such that the intersection of two such cells is a common face. These subdivisions form a poset by refinement whose minimal elements are called {\em fine mixed subdivisions}.

Let $P_1,\ldots,P_n$ be polytopes in $\mathbb{R}^m$, and by abuse of notation we say that $\mathbb{R}^{n+m}$ has a standard basis $\sfe_1,\ldots, \sfe_n, \sfe'_1,\ldots,\sfe'_m$. The {\em Cayley embedding} of polytopes $P_1,\ldots,P_n$ in $\mathbb{R}^m$ is the polytope $\mathcal{C}(P_1,\ldots,P_n)$ given by the convex hull of $\sfe_i \times P_i$ for $i=1,\ldots,n$.

\begin{proposition}[The Cayley trick  \cite{HRS}] \label{trick}
 For any positive parameters $a_1,\ldots,a_n$ with $\sum a_i=1$, any polytopal subdivision of $\mathcal{C}(P_1,\ldots,P_n)$ intersected by $(a_1,\ldots,a_n) \times \mathbb{R}^m$ gives a mixed subdivision of $a_1P_1+\cdots + a_nP_n$. This correspondence gives a  poset isomorphism between the poset of polytopal subdivisions of $\mathcal{C}(P_1,\ldots,P_n)$ and the poset of mixed subdivision of $a_1P_1+\cdots + a_nP_n$, both ordered by refinement. \end{proposition}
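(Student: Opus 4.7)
The plan is to establish the Cayley trick by unpacking both the Cayley embedding and the mixed subdivision via the natural affine projection. I would proceed in three steps. First, I would verify the ambient correspondence: let $\pi : \mathbb{R}^{n+m} \to \mathbb{R}^m$ be the projection onto the last $m$ coordinates. Every point of $\mathcal{C}(P_1,\ldots,P_n)$ has a (non-unique) expression as a convex combination $\sum_i \lambda_i (\sfe_i, p_i)$ with $\lambda_i \geq 0$, $\sum_i \lambda_i = 1$, and $p_i \in P_i$; the requirement that such a point lie in the slice $\{(a_1,\ldots,a_n)\} \times \mathbb{R}^m$ forces $\lambda_i = a_i$ (because the $\sfe_i$ are linearly independent in the first $n$ coordinates), so the projection $\pi$ restricts to an affine bijection between $\mathcal{C}(P_1,\ldots,P_n) \cap (\{(a_1,\ldots,a_n)\} \times \mathbb{R}^m)$ and $a_1 P_1 + \cdots + a_n P_n$.

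Second, I would localize the correspondence to cells. Any cell $C$ of a polytopal subdivision of $\mathcal{C}(P_1,\ldots,P_n)$ is the convex hull of a set of vertices of $\mathcal{C}$, which naturally partition as $V_1 \sqcup \cdots \sqcup V_n$ where $V_i \subseteq \{\sfe_i\} \times \mathrm{vert}(P_i)$. Setting $B_i := \mathrm{conv}(\pi(V_i)) \subseteq P_i$, a direct computation with convex combinations shows that
\[
\pi\bigl(C \cap (\{(a_1,\ldots,a_n)\} \times \mathbb{R}^m)\bigr) = a_1 B_1 + \cdots + a_n B_n,
\]
which is precisely a Minkowski cell of $a_1 P_1 + \cdots + a_n P_n$. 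Hence slicing-and-projecting sends each cell of the Cayley subdivision to a Minkowski cell, and the assignment is compatible with taking faces.

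Third, I would verify that this assignment is a poset isomorphism. Covering of $\mathcal{C}$ by the cells of a subdivision transfers to covering of $a_1 P_1 + \cdots + a_n P_n$ because $\pi$ is affine and surjective onto the slice, and two cells of the Cayley subdivision meet in a common face iff their images meet in a common face of the induced Minkowski cells (this uses that the vertex-partition data $V_1 \sqcup \cdots \sqcup V_n$ is recoverable from the Minkowski cell). Conversely, given a mixed subdivision $\{B_1^{(\alpha)} + \cdots + B_n^{(\alpha)}\}$ of $a_1 P_1 + \cdots + a_n P_n$, one lifts each Minkowski cell to the Cayley cell $\mathrm{conv}\bigl(\bigsqcup_i \{\sfe_i\} \times B_i^{(\alpha)}\bigr)$ and checks the result is a polytopal subdivision of $\mathcal{C}$; refinement on both sides is then manifestly preserved since the cell data matches.

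The main obstacle is the last point: showing that the inverse lifting actually produces a polyhedral subdivision of the \emph{entire} Cayley polytope, not just of one slice. This requires exploiting the fiber structure of $\mathcal{C}(P_1,\ldots,P_n)$ over the simplex $\mathrm{conv}(\sfe_1,\ldots,\sfe_n)$: a mixed subdivision in one generic slice extends coherently across all slices because the fibers of the projection onto the simplex are precisely (rescaled) Minkowski sums, so that lifted cells glue correctly along the interior and intersect in faces. Once this coherence is established, the poset isomorphism follows.
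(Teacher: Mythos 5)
The paper does \emph{not} prove Proposition~\ref{trick}; it is stated as a black box with a citation to \cite{HRS}, so there is no in-paper argument to compare your proposal against. What you have written is a reconstruction of the classical slicing-and-projecting argument, and the first step (the affine bijection between the slice $\{(a_1,\ldots,a_n)\}\times\mathbb{R}^m\cap\mathcal{C}(P_1,\ldots,P_n)$ and $a_1P_1+\cdots+a_nP_n$) is correct and cleanly done.

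There are, however, real gaps once you move to cells and to the inverse direction. First, you assert that a cell $C$ of a polytopal subdivision of $\mathcal{C}(P_1,\ldots,P_n)$ is the convex hull of a set of \emph{vertices} of $\mathcal{C}$. That is only true in the point-configuration framework of \cite{HRS}, where subdivisions are by definition built on the fixed point set $\bigsqcup_i \{\sfe_i\}\times\mathrm{vert}(P_i)$; for arbitrary polytopal subdivisions, cells may have new vertices, and then your decomposition $V_1\sqcup\cdots\sqcup V_n$ is unavailable. Second, your claim that ``the vertex-partition data $V_1\sqcup\cdots\sqcup V_n$ is recoverable from the Minkowski cell'' is not correct if a Minkowski cell is read off merely as the polytope $B_1+\cdots+B_n$: distinct tuples $(B_1,\ldots,B_n)$ can produce the same set. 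Mixed cells must be carried as labeled tuples, and the isomorphism in the Cayley trick is between subdivisions with this labeling, not between their underlying geometric decompositions. Third — and this is the crux — the step you flag at the end, namely that the inverse lift of a mixed subdivision assembles into an honest polytopal subdivision of the whole Cayley polytope, is where all the work lies; your last paragraph names the required ``coherence across fibers'' without arguing for it. As written, that paragraph restates the desired conclusion rather than proving it, so the poset-isomorphism direction of the statement is not established by your proposal.
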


Recall that by Proposition~\ref{prop:flow-minkowski} the flow polytope $\F_G(\a)$, $\a \in \Z^n_{\geq 0}$, is the Minkowski sum \eqref{eq:flow-minkowski} of flow polytopes $\F_G(e_i-e_{n+1})$ for $i=1,\ldots,n$.  Also recall that for a   graph $G$ on the vertex set $[n+1]$, we let $G^{\star}$ be the graph obtained from $G$ by adding a vertex $0$ adjacent to vertices $i=1,2,\ldots,n$.

\begin{proposition} \label{cay}
 The Cayley embedding $\mathcal{C}\left(\F_G(e_1-e_{n+1}),\F_G(e_2-e_{n+1}),\ldots,\F_G(e_n-e_{n+1})\right)$ is the flow polytope $\F_{G^{\star}}(e_1-e_{n+2})$.
\end{proposition}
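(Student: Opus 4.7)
The plan is to identify both polytopes as subsets of the same ambient space $\R^{n+m}$ and then establish an explicit bijection between their vertex sets that preserves coordinates. Once the vertices match term-for-term, equality follows because each polytope is the convex hull of its vertices.

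First, I set up coordinates. The polytope $\F_{G^\star}(e_1-e_{n+2})$ naturally lives in $\R^{|E(G^\star)|}=\R^{n+m}$: identify the standard basis vector $\sfe_i$ with the coordinate of the new edge $(0,i)\in E(G^\star)$ for $i=1,\dots,n$, and $\sfe'_j$ with the coordinate of the $j$-th edge of $G$. Under this identification the ambient space is exactly that of the Cayley embedding $\mathcal{C}(P_1,\dots,P_n)$, where $P_i:=\F_G(e_i-e_{n+1})$.

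Next, I characterize the vertices of each polytope. By Corollary~\ref{prop:vertFG1000}, the vertices of $\F_{G^\star}(e_1-e_{n+2})$ are the unit flows $\f({\sf q})$ where ${\sf q}$ is a directed path from $0$ to $n+1$ in $G^\star$. Every such ${\sf q}$ decomposes uniquely as the edge $(0,i)$ for some $i\in[n]$ followed by a path ${\sf p}$ from $i$ to $n+1$ in $G$, and in coordinates $\f({\sf q})=\sfe_i+\sum_{e\in{\sf p}}\sfe'_e$. On the other hand, the vertices of $\mathcal{C}(P_1,\dots,P_n)$ are the points $(\sfe_i,v)$ with $v$ a vertex of $P_i$: projection onto the first $n$ coordinates sends $\mathcal{C}(P_1,\dots,P_n)$ onto the simplex $\operatorname{conv}(\sfe_1,\dots,\sfe_n)$, whose vertices are the $\sfe_i$, and the fiber over $\sfe_i$ is $\{\sfe_i\}\times P_i$. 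Applying Corollary~\ref{prop:vertFG1000} again, the vertices of $P_i$ are the unit flows $\f({\sf p})$ on paths ${\sf p}$ from $i$ to $n+1$ in $G$, so the corresponding vertex of the Cayley embedding has coordinate form $\sfe_i+\sum_{e\in{\sf p}}\sfe'_e$.

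The two vertex sets coincide verbatim under the coordinate identification: both are indexed by pairs $(i,{\sf p})$ with ${\sf p}$ a path from $i$ to $n+1$ in $G$, and each pair produces the same vector $\sfe_i+\sum_{e\in{\sf p}}\sfe'_e$. Therefore $\F_{G^\star}(e_1-e_{n+2})=\mathcal{C}(P_1,\dots,P_n)$. The key combinatorial observation is that directed paths from $0$ to $n+1$ in $G^\star$ are in canonical bijection with pairs (starting edge $(0,i)$, path from $i$ to $n+1$ in $G$); once this is in place the coordinate match is automatic and no serious computational obstacle arises.
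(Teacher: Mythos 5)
Your proof is correct and follows essentially the same route as the paper: both arguments invoke Corollary~\ref{prop:vertFG1000} for $G$ and for $G^\star$ and use the observation that a directed $0\to n+1$ path in $G^\star$ decomposes uniquely as the edge $(0,i)$ followed by an $i\to n+1$ path in $G$. The only cosmetic difference is that you first characterize the vertex set of the Cayley embedding via the projection/fiber argument, whereas the paper simply treats $\mathcal{C}(P_1,\ldots,P_n)$ as the convex hull of $\bigcup_i \{\sfe_i\}\times V(P_i)$ and matches this generating set directly against the vertex set of $\F_{G^\star}(e_1-e_{n+2})$; the two are logically equivalent.
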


\begin{proof}
 $\mathcal{C}\left(\F_G(e_1-e_{n+1}),\F_G(e_2-e_{n+1}),\ldots,\F_G(e_n-e_{n+1})\right)$ is the convex hull of ${\sf e}_i \times \F_G(e_i-e_{n+1})$ for $i=1,2,\ldots,n$. Regard ${\sf e_i}$ as a unit flow on the edge $(0,i)$. Since by Proposition~\ref{prop:vertFG1000} the vertices of $\F_G(e_i-e_{n+1})$ are unit flows supported on the directed paths from vertex $i$ to vertex $n+1$,  by concatenating these paths to the edge $(0,i)$ we obtain directed paths in $G^{\star}$ of the form $0\to i \to \cdots \to n+1$. Doing these concatenations for $i=1,\ldots,n$ yields all directed paths from vertex $0$ to vertex $n+1$ in $G^{\star}$. By Proposition~\ref{prop:vertFG1000} the unit flows on such paths give the vertices of the flow polytope $\F_{G^{\star}}(1,0,\ldots,-1)$.
\end{proof}

\begin{corollary} \label{cor:CayleyTrickFP}
For $a_1,\ldots,a_n>0$, mixed subdivisions of
$\F_G(a_1,\ldots,a_n,-\sum_i a_i)$ are in bijection with polytopal subdivisions of $\F_{G^{\star}}(e_1-e_{n+2})$. In particular fine mixed subdivisions of the former are in bijection with triangulations of the latter. 
\end{corollary}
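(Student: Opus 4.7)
The plan is to assemble the corollary by stringing together the three preceding results: the Minkowski decomposition of $\F_G(\a)$ from Proposition~\ref{prop:flow-minkowski}, the identification of the Cayley embedding with a flow polytope from Proposition~\ref{cay}, and the Cayley trick itself (Proposition~\ref{trick}). Since each is already in place, essentially no new geometric input is needed; the proof amounts to matching up hypotheses and being careful about the normalization $\sum a_i = 1$ required by Proposition~\ref{trick}.

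Concretely, first I set $P_i := \F_G(e_i - e_{n+1})$ for $i=1,\ldots,n$. By Proposition~\ref{prop:flow-minkowski},
\[
\F_G(a_1,\ldots,a_n,-{\textstyle\sum_i a_i}) \;=\; a_1 P_1 + a_2 P_2 + \cdots + a_n P_n,
\]
and by Proposition~\ref{cay} the Cayley embedding $\mathcal{C}(P_1,\ldots,P_n)$ equals $\F_{G^\star}(e_1 - e_{n+2})$. Next, let $s := \sum_i a_i > 0$ and apply Proposition~\ref{trick} to the rescaled parameters $a_i/s$, which are positive and sum to $1$. This produces a poset isomorphism (under refinement) between polytopal subdivisions of $\mathcal{C}(P_1,\ldots,P_n) = \F_{G^\star}(e_1-e_{n+2})$ and mixed subdivisions of $\frac{1}{s}\F_G(\a)$.

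Finally, I note that mixed subdivisions of a polytope and of its positive dilate are in obvious poset-isomorphic correspondence via scaling by $s$, so mixed subdivisions of $\frac{1}{s}\F_G(\a)$ correspond bijectively to mixed subdivisions of $\F_G(\a)$. Composing the two isomorphisms yields the desired bijection. The second assertion is then automatic: fine mixed subdivisions and triangulations are precisely the minimal elements of the two posets under refinement, and a poset isomorphism restricts to a bijection on minimal elements.

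I do not anticipate a genuine obstacle here, since every ingredient is already proved; the only bookkeeping item is the rescaling by $s$, which is harmless because the Cayley trick is stated only for parameters summing to $1$ and the resulting bijection between subdivisions is invariant under positive dilation of the Minkowski sum.
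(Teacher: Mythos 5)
Your proposal is correct and follows exactly the same route as the paper: decompose $\F_G(\a)$ as a Minkowski sum via Proposition~\ref{prop:flow-minkowski}, identify the Cayley embedding with $\F_{G^\star}(e_1-e_{n+2})$ via Proposition~\ref{cay}, and apply the Cayley trick to the rescaled parameters $a_i/s$, then dilate back by $s$. The only thing you add is spelling out why the poset isomorphism restricts to a bijection on minimal elements, which the paper leaves implicit.
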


\begin{proof} By \eqref{eq:flow-minkowski} we have that  $\F_G(\a)= a_1\F_G(e_1-e_{n+1})+\cdots+a_n \F_G(e_n-e_{n+1})$. By applying Propositions \ref{trick} and \ref{cay} we obtain the desired bijection  by intersecting polytopal subdivisions of $\F_{G^{\star}}(e_1-e_{n+2})$ with the subspace $(a_1/s,\ldots,a_n/s) \times \mathbb{R}^m$ where $s = \sum_i a_i$ followed by stretching the intersection by a factor of $s$.
\end{proof}

Next, we relate this application of the Cayley trick to $\F_G(\a)$
with the canonical subdivision of this flow polytope. The next result shows that this subdivision is a
fine mixed subdivision of $\F_G(\a)$.

\begin{lemma} \label{lemm:canonicalisfinemixed}
For the polytope $\F_G(\a)$ the canonical subdivision is a fine mixed subdivision.
\end{lemma}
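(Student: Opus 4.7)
The plan is to identify the cells of the canonical subdivision directly as fine Minkowski cells of $\F_G(\a)$ with respect to the Minkowski decomposition \eqref{eq:flow-minkowski}. First, by Lemma~\ref{lem:leaves} the full-dimensional cells of the canonical subdivision have the form $\F_{G(\m)}(\a)$, and by the proof of Lemma~\ref{lem:Gm} each such cell factors as the product $\prod_{i=1}^n a_i \Delta_{m_i-1}$. Since the flows on the $m_i$ parallel edges $(i,n+1)$ of $G(\m)$ occupy pairwise disjoint coordinate blocks, applying Proposition~\ref{prop:flow-minkowski} to $G(\m)$ itself recasts this factorization as the Minkowski sum
\[
\F_{G(\m)}(\a) \;=\; \sum_{i=1}^n a_i\, B_i^{(\m)}, \qquad B_i^{(\m)} := \F_{G(\m)}(e_i - e_{n+1}) \;\cong\; \Delta_{m_i-1}.
\]

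Next I would read this decomposition inside $\F_G(\a)$ through the convention of Remark~\ref{rem:interpretationFGT}: each edge $(i,n+1)$ of $G(\m)$ is a formal sum of edges of $G$ forming a directed path from $i$ to $n+1$, so the $m_i$ vertices of $B_i^{(\m)}$ correspond, under the integral equivalence of that remark, to unit flows on $m_i$ distinct paths from $i$ to $n+1$ in $G$. By Corollary~\ref{prop:vertFG1000} these unit flows are vertices of $\F_G(e_i-e_{n+1})$, and since integral equivalence preserves affine independence the simplex $B_i^{(\m)}$ is carried to the convex hull of a subset of vertices of $\F_G(e_i-e_{n+1})$. Thus $\F_{G(\m)}(\a)$ is a Minkowski cell of $\F_G(\a) = \sum_i a_i \F_G(e_i - e_{n+1})$. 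The requirement that two cells intersect in a common face is then inherited from the fact, recorded in Section~\ref{ccrt}, that the canonical subdivision is a genuine polytopal subdivision (not merely a dissection).

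Finally, to prove fineness I would compute
\[
\sum_{i=1}^n \dim B_i^{(\m)} \;=\; \sum_{i=1}^n (m_i-1) \;=\; \#E(G) - n \;=\; \dim \F_G(\a),
\]
and combine this dimension additivity with the fact that each $B_i^{(\m)}$ is a simplex to conclude that each Minkowski cell is minimal, so the mixed subdivision is fine. As a cross-check one can translate via the Cayley trick: by Proposition~\ref{cay} and Corollary~\ref{cor:CayleyTrickFP} the cell $\F_{G(\m)}(\a)$ lifts to $\F_{G(\m)^\star}(e_1-e_{n+2})$, a polytope of dimension $\#E(G)-1$ with exactly $\#E(G) = \sum_i m_i$ vertices (one per path $0\to i\to\cdots\to n+1$), hence a simplex in a triangulation of the Cayley embedding $\F_{G^\star}(e_1-e_{n+2})$. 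I expect the only step that requires care is the second paragraph: one must verify that, once the summands $B_i^{(\m)}$ are viewed inside $\F_G(e_i-e_{n+1})$ via Remark~\ref{rem:interpretationFGT}, the Minkowski sum $\sum_i a_i B_i^{(\m)}$ genuinely reproduces $\F_{G(\m)}(\a) \subseteq \F_G(\a)$ as a Minkowski cell of \eqref{eq:flow-minkowski}, i.e., that the product-of-simplices factorization of Lemma~\ref{lem:Gm} is compatible with the ambient Minkowski decomposition.
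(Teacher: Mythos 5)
Your proposal is correct, and it differs from the paper's proof in one substantive respect. The paper argues that the canonical subdivision is a mixed subdivision by induction: it observes that each compounded reduction on a vertex~$i$ simultaneously subdivides every Minkowski summand $a_j\,\F_G(e_j-e_{n+1})$ (some trivially), hence each CR step produces a mixed subdivision, and the CCRT is a composition of such steps. You bypass that induction entirely and instead verify directly that each terminal cell $\F_{G(\m)}(\a)$ is a Minkowski cell, by factoring it as $\prod_i a_i\Delta_{m_i-1}$ (proof of Lemma~\ref{lem:Gm}), re-reading the factors as $a_i B_i^{(\m)}$ with $B_i^{(\m)}\cong\Delta_{m_i-1}$ via Proposition~\ref{prop:flow-minkowski} applied to $G(\m)$, and then using Remark~\ref{rem:interpretationFGT} together with Corollary~\ref{prop:vertFG1000} to see each $B_i^{(\m)}$ as the convex hull of a subset of vertices of $\F_G(e_i-e_{n+1})$; the common-face requirement you get from the fact, recorded after the definition of the CCRT in Section~\ref{ccrt}, that the canonical subdivision is a genuine polytopal subdivision (not just a dissection), citing \cite{jessica}. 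This is a cleaner route than the paper's, which does not itself argue the common-face property but does in effect carry out the same $\Delta_{m_i-1}\subseteq\F_G(e_i-e_{n+1})$ identification inside its fineness paragraph. The fineness argument is the same in both: the summands of each cell are simplices whose dimensions add up to $\dim\F_G(\a)=\#E(G)-n$, so no proper refinement is possible. Your Cayley-trick cross-check is sound but not used in the paper's proof of this particular lemma (the paper uses the Cayley trick afterwards, to re-derive Theorem~\ref{thm:numcells} (a)~$\leftrightarrow$~(d)).
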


\begin{proof} 
First we show that the canonical subdivision is a mixed
subdivision. By expressing $\F_G(\a)$ as
the Minkowski sum \eqref{eq:flow-minkowski} we see that each compounded
reduction (CR) on vertex $i$ subdivides the polytopes $a_j\cdot
\F_G(e_j-e_{n+1})$,  $j \in [n]$ (some of them trivially). Thus,  the subdivision of $\F_G(\a)$ by  a
CR is a mixed subdivision. Since the canonical subdivision is obtained by executing compounded reductions in a specified order,  we obtain that the 
canonical subdivision is a mixed subdivision. 

To see that the canonical subdivision is fine, we note that the pieces of
the canonical subdivision are the polytopes $\F_{G(\m)}(\a)$ for the
graphs $G(\m)$ defined in Section~\ref{sec:Gms}. Since these graphs
only have edges of the form $(i,n+1)$ then \eqref{eq:flow-minkowski}
applied to $\F_{G(\m)}(\a)$ expresses this polytope as a Minkowski sum
of simplices
\[
\F_{G(\m)}(\a)  = a_1\Delta_{m_1-1} + a_2 \Delta_{m_1-1} + \cdots +
a_n \Delta_{m_n-1},
\]
where $a_i \Delta_{m_i+1} \subseteq a_i \F_G(e_i-e_{n+1})$ as
explained in Remark~\ref{rem:interpretationFGT}. Moreover, the sum of the dimensions of the unimodular simplices in the above equation is the dimension of $\F_{G(\m)}(\a)$.  Thus we see  that the canonical subdivision is a minimal element in the
poset of mixed subdivisions of $\F_G(\a)$.
\end{proof}

We are now ready to give a second proof of Theorem~\ref{thm:numcells}
(a) $\leftrightarrow$ (d) without using the Lidskii formula
\eqref{eq:vol}. 

\begin{proof}[Second proof of Thm.~\ref{thm:numcells} (a) $\leftrightarrow$ (d)]
By Lemma~\ref{lemm:canonicalisfinemixed} the number $P$ of cells in
the canonical subdivision equals the number of cells in a fine mixed
subdivision of $\F_G(\a)$. By Corollary~\ref{cor:CayleyTrickFP} the number of cells in a fine
mixed subdivision of $\F_G(\a)$ is the number of simplices in a triangulation of $\F_{G^{\star}}(e_1-e_{n+2})$, i.e. the normalized volume of this flow polytope. 
\end{proof}

\begin{figure}
\includegraphics{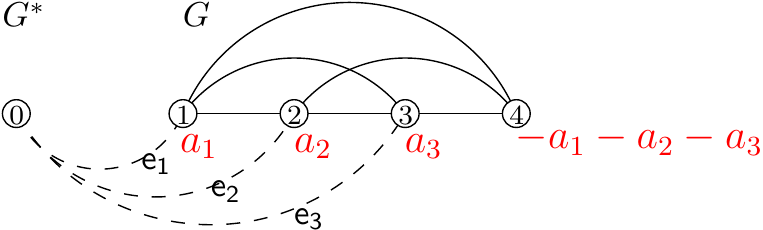}
\caption{The Cayley embedding of the flow polytope of a graph $G$ is equivalent to the flow polytope of the graph $G^{\star}$ obtained from $G$ by adding a vertex $0$ with edges $(0,i)$ for $i=1,\ldots,n$.}
\label{fig:exCayleyEmbedding}
\end{figure}

\subsection*{Acknowledgements}
We thank Alex Postnikov and Richard Stanley for sharing their insights into 
triangulating $\F_G(1,0,\ldots,0,-1)$, which served as the starting
point of our work. We also thank Federico Castillo, Sylvie Corteel, Rafael Gonz\'alez D'L\'eon,
Fu Liu and Michele Vergne for helpful comments and suggestions.

\appendix
\section{Examples of canonical subdivisions and Lidskii formulas} \label{sec:appendix}

\begin{example}
For the graph $G$ with vertices $[3]$ and edges
$\{(1,2),(1,2),(2,3),(2,3)\}$ (see Figure~\ref{fig:exflowpolys},
left) we have that $(\om_1,\om_2) = (1,1)$.  The basic reduction tree for $\F_G(1,1,-2)$ is given in
Figure~\ref{fig:brt_ex1}, left. The Lidskii volume formula~\eqref{eq:vol} gives
\[
\vol \F_G({\bf 1}) = \binom{2}{1,1} K_G(1-1,1-1,0) + \binom{2}{2,0}
K_G(2-1,0-1,0) = 2 \cdot 1 + 1 \cdot 2 =4.
\]
The first Lidskii lattice point formula~\eqref{eq:kost} gives
\[
K_G(1,1,-2) = \binom{2}{1}\binom{2}{1} K_G(0,0,0) +
\binom{2}{2}\binom{2}{0} K_G(1,-1,0) = 4\cdot 1 + 1\cdot 2 = 6.
\]
Since $(\i_1,\i_2) = (-1,1)$, the second Lidskii lattice point formula~\ref{eq:kostmultiset} gives
\[
K_G(1,1,-2) = \multiset{2}{1} \multiset{0}{1} K_G(0,0,0) +
\multiset{2}{2}\multiset{0}{0} K_G(1,-1,0) = 0 + 3\cdot 2 = 6. 
\]
The subdivision yields $K_G(1,-1,0)=2$ cells of one type with three lattice points
and $K_G(0,0,0)=1$ cell of another type with four lattice
points. Depending on how the lattice points of the common facets are
counted, we obtain the two formulas above. See
Figure~\ref{fig:brt_ex1}, right.
\end{example}

\begin{figure}
\includegraphics[scale=0.7]{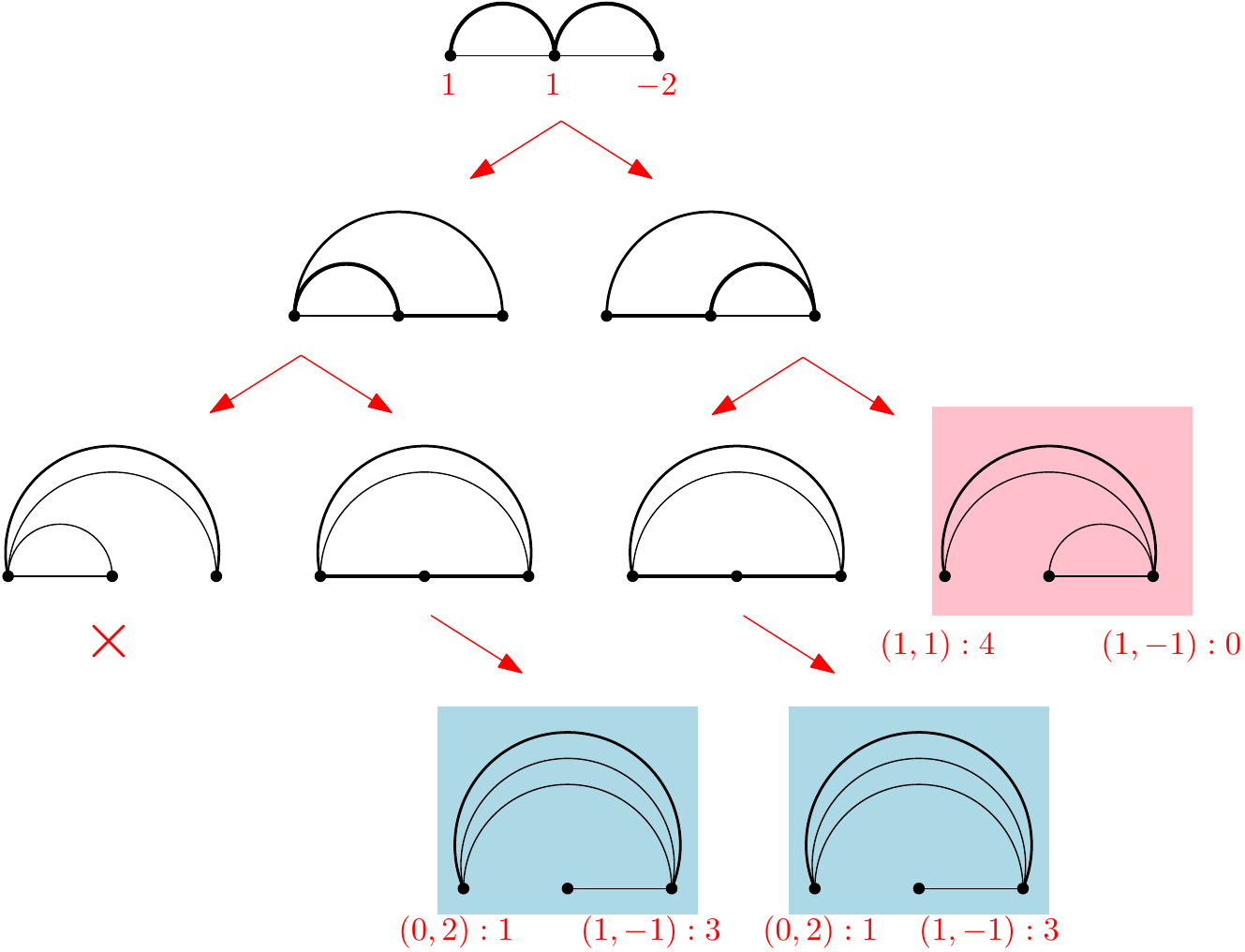}
\quad 
\includegraphics[scale=0.6]{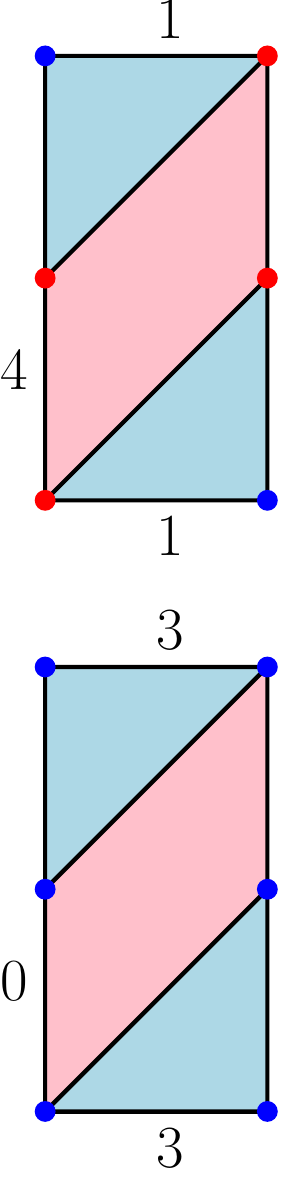}
\caption{Left: the basic reduction tree for the polytope
  $\F_{G}(1,1,-2)$ where $G$ has edges
  $\{(1,2),(1,2),(2,3),(2,3)\}$. Below each leaf $G(\m)$, the lattice
  point contribution of the corresponding cells is given in the form ${\bf v}: K_{G(\m)}({\bf v})$ where ${\bf v} ={\bf a} - {\bf G(\m)}$
  and ${\bf v} = {\bf a} - {\bf G(\m)}'$.  Right: the canonical subdivision of the
  polytope given by the reduction tree  illustrating \eqref{eq:kost}
  and \eqref{eq:kostmultiset} . The lattice points are colored
  according to the contribution of each cell.}
\label{fig:brt_ex1}
\end{figure}

\begin{figure}
\includegraphics[scale=0.6]{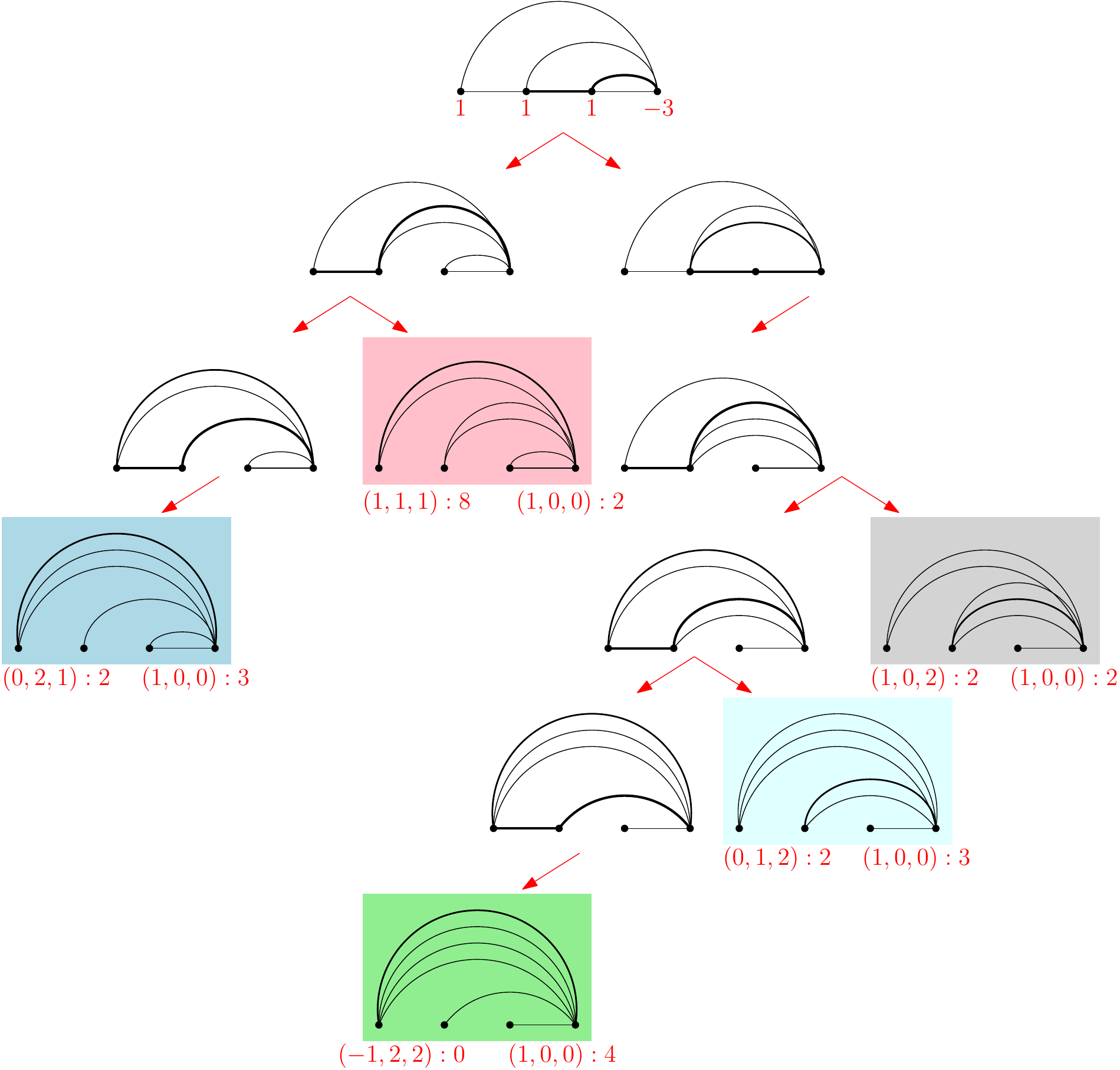} 
\quad
 \includegraphics[scale=0.5]{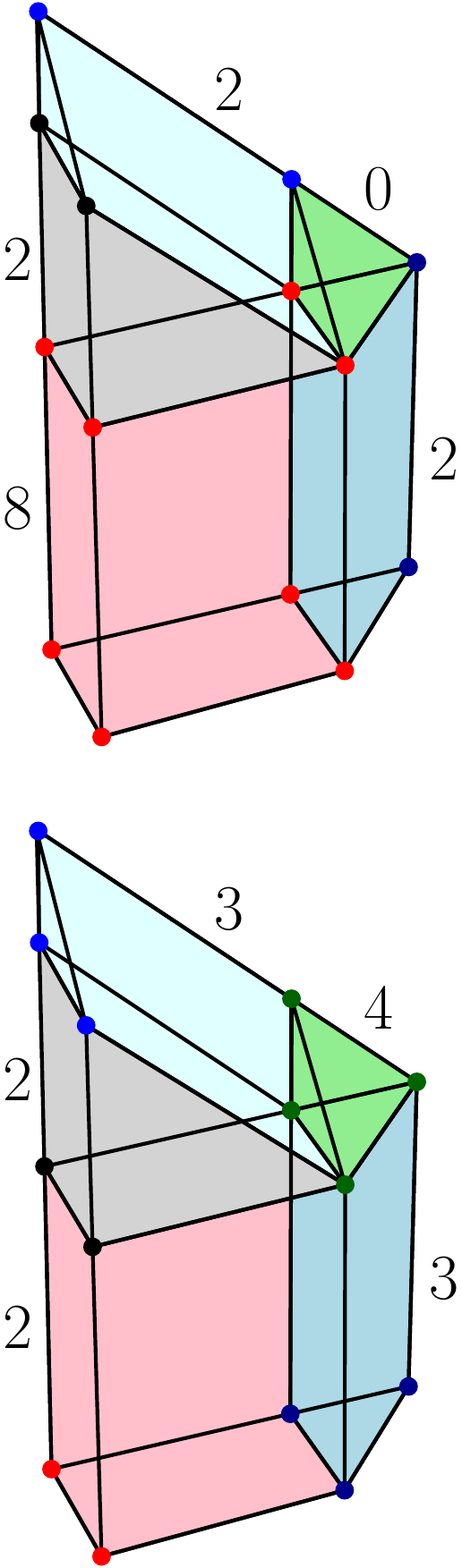}
\caption{Left: the basic reduction tree for the polytope
  $\F_{\PS_3}(1,1,1,-3)$. Below each leaf $G(\m)$, the lattice
  point contribution of the corresponding cells is given in the form ${\bf v}: K_{G(\m)}({\bf v})$ where ${\bf v} ={\bf a} - {\bf G(\m)}$
  and ${\bf v} = {\bf a} - {\bf G(\m)}'$.  Right: the canonical subdivision of the
  polytope given by the reduction tree  illustrating \eqref{eq:kost}
  and \eqref{eq:kostmultiset} . The lattice points are colored
  according to the contribution of each cell.}
\label{fig:brt_PS}
\end{figure}

\begin{example}
For the graph $\PS_3$ (see Figure~\ref{fig:exflowpolys},
center) we have that $(\om_1,\om_2,\om_3) = (1,1,1)$.  The basic reduction tree for $\F_{\PS_3}(1,1,1,-3)$ is given in
Figure~\ref{fig:brt_PS}, left. Since $K_{\PS_3}(a,b,c,0)=1$ or $0$, then the Lidskii volume formula~\eqref{eq:vol} gives
\[
\vol \F_{\PS_3}({\bf 1}) = \binom{3}{2,0,1} + \binom{3}{1,1,1}
+ \binom{3}{3,0,0}  +
\binom{3}{2,1,0}  + \binom{3}{1,2,0}  = 3+6+1+3 +3 = 16.
\]
The first Lidskii lattice point formula~\eqref{eq:kost} gives
\[
K_{\PS_3}(1,1,1,-3) = \binom{2}{0}\binom{2}{1}\binom{2}{2} +
\binom{2}{1}^3 + \binom{2}{0}\binom{2}{1}\binom{2}{2} +
\binom{2}{0}\binom{2}{1}\binom{2}{2} = 2 + 8 + 2+2=14.
\]
Since $(\i_1,\i_2,\i_3) = (-1,0,0)$, the second Lidskii lattice point formula~\ref{eq:kostmultiset} gives
\begin{multline}
K_{\PS_3}(1,1,1,-3) =  \multiset{2}{2}\multiset{1}{0}\multiset{1}{1}+\multiset{2}{1}\multiset{1}{1}\multiset{1}{1} +
\multiset{2}{3}\multiset{1}{0}\multiset{1}{0} +\\
+ 
\multiset{2}{2}\multiset{1}{1}\multiset{1}{0}  +
\multiset{2}{1}\multiset{1}{2}\multiset{1}{0}  = 3 +2+ 4+3 + 2 = 14. 
\end{multline}
The subdivision yields five cells of different types. Depending on how the lattice points of the common facets are
counted, we obtain the two formulas above. See
Figure~\ref{fig:brt_PS}, right.
\end{example}

\bibliography{biblio-kir}

\begin{thebibliography}{10}

\bibitem{BV2}
W.~Baldoni and M.~Vergne.
\newblock {K}ostant partitions functions and flow polytopes.
\newblock {\em Transform. Groups}, 13(3-4):447--469, 2008.

\bibitem{BSDL}
W.~Baldoni-Silva, J.~A.~De Loera, and M.~Vergne.
\newblock Counting integer flows in networks.
\newblock {\em Found. Comput. Math.}, 4:277--314, 2004.

\bibitem{BV1}
W.~Baldoni-Silva and M.~Vergne.
\newblock Residues formulae for volumes and {E}hrhart polynomials of convex
  polytopes.
\newblock \href{http://arxiv.org/abs/math/0103097}{arXiv:math/0103097}, 2001.

\bibitem{BR}
M.~Beck and S.~Robins.
\newblock {\em Computing the Continuous Discretely: Integer-Point Enumeration
  in Polyhedra}.
\newblock Springer-Verlag, 2007.

\bibitem{AIM}
C.~Benedetti, R.~Gonz\'{a}lez D'Le\'{o}n, C.~R.~H. Hanusa, P.~E. Harris,
  A.~Khare, A.~H. Morales, and M.~Yip.
\newblock A combinatorial model for computing volumes of flow polytopes, 2018.
\newblock \href{https://arxiv.org/abs/1801.07684}{arXiv:1801.07684}.

\bibitem{CL}
F.~Castillo and F.~Liu.
\newblock {B}erline-{V}ergne valuation and generalized permutohedra, 2015.
\newblock Discrete Comput. Geom., accepted.
  \href{https://arxiv.org/abs/1509.07884}{arXiv:1509.07884}.

\bibitem{CRY}
C.S. Chan, D.P. Robbins, and D.S. Yuen.
\newblock On the volume of a certain polytope.
\newblock {\em Experiment. Math.}, 9(1):91--99, 2000.

\bibitem{CKM}
S.~Corteel, J.S. Kim, and K.~M\'esz\'aros.
\newblock Flow polytopes with catalan volumes.
\newblock {\em C. R. Math. Acad. Sci. Paris}, 355(3):248 -- 259, 2017.

\bibitem{gallo}
G.~Gallo and C.~Sodini.
\newblock Extreme points and adjacency relationship in the flow polytope.
\newblock {\em Calcolo}, 15:277--288, 1978.

\bibitem{Hag}
J.~Haglund.
\newblock A polynomial expression for the {H}ilbert series of the quotient ring
  of diagonal coinvariants.
\newblock {\em Adv. Math.}, 227(5):2092--2106, 2011.

\bibitem{hille}
L.~Hille.
\newblock Quivers, cones and polytopes.
\newblock {\em \em Linear Algebra and its Applications}, 365:215--237, 2003.

\bibitem{HRS}
B.~Huber, J.~Rambau, and F.~Santos.
\newblock The {C}ayley trick, lifting subdivisions and the bohnedress theorem
  on zonotopal tilings.
\newblock {\em J. Eur. Math. Soc.}, 2:179--198, 2000.

\bibitem{lidskii}
B.~V. Lidski\u\i.
\newblock The {K}ostant function of the system of roots {$A\sb{n}$}.
\newblock {\em Funktsional. Anal. i Prilozhen.}, 18(1):76--77, 1984.

\bibitem{Lsurvey}
F.~Liu.
\newblock On positivity of ehrhart polynomials, 2017.
\newblock \href{https://arxiv.org/abs/1711.09962}{arXiv:1711.09962}.

\bibitem{MM}
K.~M\'esz\'aros and A.~H. Morales.
\newblock Flow polytopes of signed graphs and the {K}ostant partition function.
\newblock {\em Int. Math. Res. Not. IMRN}, 2015:830--871, 2015.

\bibitem{MMR}
K.~M\'esz\'aros, A.H. Morales, and B.~Rhoades.
\newblock The polytope of tesler matrices.
\newblock {\em Sel. Math. New Ser.}, 23:425--454, 2017.

\bibitem{jessica}
K.~M\'esz\'aros, A.H. Morales, and J.~Striker.
\newblock On flow polytopes, order polytopes, and certain faces of the
  alternating sign matrix polytope.
\newblock \href{https://arxiv.org/abs/1510.03357}{arXiv:1510.03357}, 2015.

\bibitem{MSZ}
K.~M\'esz\'aros, C.~Simpson, and Z.~Wellner.
\newblock Flow polytopes of partitions, 2017.
\newblock \href{https://arxiv.org/pdf/1707.03100}{arxiv:1707.03100}.

\bibitem{MStD}
K.~M\'esz\'aros and A.~St.~Dizier.
\newblock From generalized permutahedra to {G}rothendieck polynomials via flow
  polytopes.
\newblock \href{https://arxiv.org/abs/1705.02418}{arXiv:1705.02418}, 2017.

\bibitem{WM}
W.G. Morris.
\newblock {\em Constant Term Identities for Finite and Affine Root Systems:
  Conjectures and Theorems}.
\newblock PhD thesis, University of Wisconsin-Madison, 1982.

\bibitem{PS}
J.~Pitman and R.P. Stanley.
\newblock A polytope related to empirical distributions, plane trees, parking
  functions, and the associahedron.
\newblock {\em Discrete Comput. Geom.}, 4:603--634, 2002.

\bibitem{P}
A.~Postnikov.
\newblock Permutohedra, associahedra, and beyond.
\newblock {\em Int. Math. Res. Not. IMRN}, 2009:1026--1106, 2009.

\bibitem{OEIS}
Neil J.~A. Sloane.
\newblock The {O}n-{L}ine {E}ncyclopedia of {I}nteger {S}equences.
\newblock \url{http://oeis.org/}.

\bibitem{St-trans}
R.P. Stanley and A.~Postnikov.
\newblock Acyclic flow polytopes and {K}ostant’s partition function, 2000.
\newblock transparencies
  \href{http://www-math.mit.edu/~rstan/transparencies/kostant.ps}{link}.

\bibitem{Sturmfels}
B.~Sturmfels.
\newblock On the {N}ewton polytope of the resultant.
\newblock {\em J. Algebraic Combin.}, 3:207--236, 1994.

\bibitem{CY1}
C.~H. Yan.
\newblock On the enumeration of generalized parking functions.
\newblock In {\em Proceedings of the 31st Southeastern International Conference
  on Combinatorics, Graph Theory and Computing (Boca Raton, FL, 2000)}, volume
  147, pages 201--209, 2000.

\bibitem{CY2}
C.~H. Yan.
\newblock Generalized parking functions, tree inversions, and multicolored
  graphs.
\newblock {\em Adv. Appl. Math.}, 27:641--670, 2001.
\newblock Special issue in honor of Dominique Foata’s 65th birthday.

\bibitem{Z1}
D.~Zeilberger.
\newblock Proof of a conjecture of {C}han, {R}obbins, and {Y}uen.
\newblock {\em Electron. Trans. Numer. Anal.}, 9:147--148, 1999.

\bibitem{ZLF}
Y.~Zhou, J.~Liu, and H.~Fu.
\newblock Leading coefficients of {M}orris type constant term identities.
\newblock {\em Adv. in Appl. Math.}, pages 24--42, 2017.

\end{thebibliography}
\bibliographystyle{plain}

\end{document}